\documentclass[11pt]{amsproc}

\usepackage{amscd, amsmath, amsthm, amssymb, mathtools, mathrsfs, stmaryrd}
\usepackage{setspace}
\usepackage{comment}
\usepackage{bm, bbm}
\allowdisplaybreaks
\usepackage[all]{xy}
\usepackage{fullpage}
\usepackage{pdflscape,lipsum,etoolbox}
\usepackage[pagebackref]{hyperref}

\usepackage{tikz}
\usetikzlibrary{intersections, calc, arrows.meta}
\usepackage{here}
\usepackage{time}

\usepackage{xcolor}

\usepackage{amsrefs}

\usepackage[capitalize,nameinlink,noabbrev,nosort]{cleveref}

\hypersetup{
	colorlinks=true,       
	linkcolor=blue,          
	citecolor=blue,        
	filecolor=blue,      
	urlcolor=blue,           
}

\makeatletter
\@namedef{subjclassname@2020}{\textup{2020} Mathematics Subject Classification}
\makeatother

\newtheorem{theoremcounter}{Theorem Counter}[section]

\theoremstyle{definition}
\newtheorem{definition}[theoremcounter]{Definition}

\newtheorem{remark}[theoremcounter]{Remark}
\theoremstyle{plain}
\newtheorem{lemma}[theoremcounter]{Lemma}
\newtheorem{proposition}[theoremcounter]{Proposition}
\newtheorem{corollary}[theoremcounter]{Corollary}

\newtheorem{theorem}[theoremcounter]{Theorem}

\numberwithin{equation}{section}

\newcommand{\Z}{\mathbb{Z}}
\newcommand{\Q}{\mathbb{Q}}
\newcommand{\R}{\mathbb{R}}
\newcommand{\C}{\mathbb{C}}

\def\1{\mathbbm{1}}

\def\GL{\mathrm{GL}}
\def\O{\mathrm{O}}
\DeclareMathOperator{\Gal}{Gal}

\DeclareMathOperator{\rank}{rank}

\newcommand{\fS}{\mathfrak{S}}

\newcommand{\cO}{\mathcal{O}}
\newcommand{\cP}{\mathcal{P}}

\newcommand{\cR}{\mathcal{R}}

\begin{document}

\title[]{Root lattices over totally real fields}

\author{Ryotaro Sakamoto}
\address{Department of Mathematics,  University of Tsukuba,  1-1-1 Tennodai,  Tsukuba,  Ibaraki 305-8571,  Japan}
\email{rsakamoto@math.tsukuba.ac.jp}

\author[]{Miyu Suzuki}
\address{Department of Mathematics, Kyoto University, Kitashirakawa Oiwake-cho, Sakyo-ku,  Kyoto 606-8502, Japan}
\email{suzuki.miyu.4c@kyoto-u.ac.jp}

\author{Hiroyoshi Tamori}
\address{Department of Mathematical Sciences, 
Shibaura Institute of Technology, 307 Fukasaku, Minuma-ku, Saitama City, Saitama, 337-8570, Japan}
\email{tamori@shibaura-it.ac.jp}

\subjclass[2020]{}


\maketitle
\renewcommand{\thefootnote}{}
\renewcommand{\thefootnote}{\arabic{footnote}}

\renewcommand{\leftmargini}{25pt}

\begin{abstract}
A root lattice is a finite rank $\Z$-lattice generated by elements $x$ satisfying $x\cdot x=2$.
It is well-known that the root lattices have an $ADE$ classification and they play a prominent role in the study of even unimodular lattices.
The notion of root lattices can be naturally generalized to lattices over the ring of integers $\cO$ of a totally real field $K$.
In the case where $K$ is a real quadratic field, such lattices were classified  by Mimura in 1979, and 
this classification has been used by several researchers in the study of even unimodular $\mathcal{O}$-lattices.
In this paper, we extend this classification to arbitrary totally real fields. 
The irreducible root lattices of rank greater than $2$ are indexed by finite Coxeter systems.
All the rank $2$ root lattices are realized as orders in quadratic extensions of $K$ and their classification requires some technique from algebraic number theory.
\end{abstract}

\tableofcontents

\section{Introduction}\label{sec:introduction}

A \emph{root lattice} is a finitely generated free $\Z$-module $L$ generated by the set of roots $\Phi(L)\coloneqq\{x\in L \mid x\cdot x=2\}$.
Here,  $(x,  y)\mapsto x\cdot y$ is a positive definite symmetric bilinear form on the real vector space $V\coloneqq L\otimes_\Z\R$ 
of positive dimension 
satisfying $x\cdot y\in\Z$ for all $x,  y\in L$.
Root lattices play a prominent role in the classification of $24$-dimensional even unimodular lattices,  namely Niemeier lattices.
See \cite[Chapter 18]{CS} and \cite[Chapter 3]{Ebeling13}.

It is known that every root lattice admits a unique direct sum decomposition into irreducible root lattices, which are classified by the $ADE$-types. 
To be precise,  if $L$ is an irreducible root lattice,  then $\Phi(L)$ is an irreducible simply-laced root system and is classified using the Dynkin diagrams of type $A_n$ ($n\geq 1$),  $D_n$ ($n\geq4$),  and $E_n$ ($n=6,  7,  8$).  
For the details,  we refer the readers to \cite[Chapter 1]{Ebeling13}.

\textbf{In the present paper,  we generalize this classification to the root lattices over totally real fields.}
Let $K$ be a totally real number field, that is, $K$ is an algebraic (possibly infinite) extension of $\mathbb{Q}$ and all embeddings $K \hookrightarrow \mathbb{C}$ have image in $\mathbb{R}$. 
Let  $\cO \coloneqq \cO_K$ denote the ring of integers of $K$, that is, the set of all elements of $K$ that are integral over $\mathbb{Z}$. 
Suppose that $L$ is a 
nonzero 
finitely generated free $\cO$-module with a non-degenerate symmetric bilinear form 
    \[
    L\times L\to\cO; \quad (x,y) \mapsto x\cdot y.
    \] 
We say that $L$ is an \emph{$\cO$-lattice} if the bilinear form is totally positive definite,  \emph{i.e.},  for all real embedding $\sigma\,\colon K \hookrightarrow \R$,  we have
    \[
    \sigma(x\cdot x)>0,  \quad x\in L\setminus\{0\}.
    \]   
An $\cO$-lattice $L$ is called a root lattice if it is generated by $\Phi(L)\coloneqq\{x\in L \mid x\cdot x=2\}$. 

In the case where $K$ is a real quadratic field, root lattices were classified by Mimura in \cite{Mimura79} using an ad-hoc method. 
Based on that classification,  even unimodular lattices over real quadratic fields are studied by many researchers,  see for example \cite{Takada85},  \cite{CH87} \cite{HH89},  \cite{Hsia89} and \cite{Scharlau94}. 
However, to our knowledge, the authors could not find any references on root lattices over general totally real fields.
Scharlau \cite[\S 2.6]{Scharlau94} mentioned that several finite Coxeter groups occur as Weyl groups of root lattices over real quadratic fields,  but did not consider general totally real fields,  saying that $\langle\hspace{-.2em}\langle$Since so little is known in general,  the restriction to quadratic fields seems to be quite natural at present and is also adopted in our work.$\rangle\hspace{-.2em}\rangle$

Now we state our main results. 
First, we fix an embedding of $K$ into $\mathbb{R}$, and regard
$K$ as a subfield of $\mathbb{R}$. For simplicity,  we also assume that $[K:\Q]<\infty$.
Let $L$ be a root lattice over $\cO$. We then observe that $\Phi(L)$ is a finite set and has a natural structure of a root system.
Then it admits a fundamental system of roots $\Delta \coloneqq \Delta(L) \subset \Phi(L)$, which is 
\begin{itemize}
\item a basis of the $K$-vector space $L\otimes_\cO K$ and
\item each $\alpha\in\Phi(L)$ is written in the form $\alpha=\sum_{\beta\in\Delta}c_\beta\beta$ with $c_\beta\in\cO_{\geq1}\sqcup\{0\}$ for all $\beta\in\Delta$ or $-c_\beta\in\cO_{\geq1}\sqcup\{0\}$ for all $\beta\in\Delta$. Here $\cO_{\geq 1} \coloneqq  \cO \cap \mathbb{R}_{\geq 1}$. 
\end{itemize}

We prove in \cref{fund_negative} that for each $\alpha,  \beta\in\Delta$,  there exists a unique positive integer $m \coloneqq m(\alpha,  \beta)$ such that 
    \[
    \alpha\cdot \beta=-2\cos(\pi/m).
    \]
Hence we can associate the Coxeter-Dynkin diagram to $L$,  whose vertices are the roots in $\Delta$ and the edges are labeled by $m(\alpha,  \beta)$. 
It turns out that this diagram is the one corresponding to a finite Coxeter system (see \cref{injection}).  
The possible diagrams are recalled in \cref{Coxeter_classification}.
They are one of the following types: $A_n$ ($n\geq1$),  $B_n$ ($n\geq2$),  $D_n$ ($n\geq4$),  $E_n$ ($n=6,7,8$),  $F_4$,  $H_n$ ($n=3,4$),  and $I_2(m)$ ($m\geq5$).

The problem is the existence of an irreducible root lattice  over $\cO$ of type $X_n$,  where $X_n$ is one of the labels listed above.
We note that the root lattice of type $X_n$ has $\mathcal{O}$-rank $n$. 
For each $X_n$,  we set 
    \[
    c(X_n) = 
        \begin{cases}
        1 & \text{if $X_n=A_n$ ($n\geq1$),  $D_n$ ($n\geq 4$),  or $E_n$ ($n=6,  7,  8$)},  \\
        \sqrt{2} & \text{if $X_n=B_n$ ($n\geq2$) or $F_4$},  \\
        (1+\sqrt{5})/2 & \text{if $X_n=H_n$ ($n=3,  4$)}, \\
        2\cos(\pi/m) & \text{if $X_n=I_2(m)$ ($m\geq5$).}
        \end{cases}
    \]
When the $\cO$-rank is greater than $2$,  we prove the next theorem.

\begin{theorem}\label{main-intro1}
Let $K$ be a totally real number field and $\cO=\cO_K$ its ring of integers.
Suppose that $n$ is an integer greater than $2$.
Then,  there exists an irreducible root lattice over $\cO$ of type $X_n$ if and only if $c(X_n)\in K$.
\end{theorem}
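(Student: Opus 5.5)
Necessity and sufficiency are handled separately.

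\emph{Necessity.} Let $L$ be an irreducible root lattice of type $X_n$ with $n\geq 3$, and let $\Delta$ be its fundamental system. By \cref{fund_negative}, every Gram entry $\alpha\cdot\beta=-2\cos(\pi/m(\alpha,\beta))$ lies in $\cO\subset K$. Hence the numbers $2\cos(\pi/m)$ for the edge labels $m$ of the diagram all lie in $K$. For types $A$, $D$, $E$ there is nothing to prove, since $c(X_n)=1$. The diagram of $B_n$ and $F_4$ contains a label $4$, which gives $\sqrt2\in K$. The diagram of $H_3$ and $H_4$ contains a label $5$, which gives $2\cos(\pi/5)=(1+\sqrt5)/2\in K$. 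Since $n>2$, type $I_2(m)$ does not occur.

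\emph{Sufficiency.} Assume $c:=c(X_n)\in K$. I would take $L=\bigoplus_{\alpha\in\Delta}\cO e_\alpha$ with the Cartan/Coxeter Gram matrix $G=(e_\alpha\cdot e_\beta)$, where $e_\alpha\cdot e_\alpha=2$ and $e_\alpha\cdot e_\beta=-2\cos(\pi/m_{\alpha\beta})$. All entries lie in $\{0,-1,-\sqrt2,-c\}\subset\cO$, because $c$ is an algebraic integer.

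I need $G$ to be totally positive definite. Each Galois conjugate $\sigma$ fixes $0$ and $-1$ and sends $\sqrt2\mapsto\pm\sqrt2$. It sends $(1+\sqrt5)/2$ to $(1+\sqrt5)/2$ or $(1-\sqrt5)/2=-2\cos(2\pi/5)$. So $\sigma(G)$ is the Gram matrix of a diagram with the same graph, possibly with some edge signs flipped and possibly with label $5$ replaced by $5/2$. Since the Coxeter graphs of $B_n$, $F_4$ and $H_n$ are trees, every sign flip can be undone by replacing some basis vectors $e_\alpha$ with $-e_\alpha$. This leaves only the question of positive definiteness of the $5/2$-variant of the $H_3$ and $H_4$ matrices. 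These are the Gram matrices of the Schwarz-type star polytope groups, which are again finite reflection groups; concretely, I would check positivity directly by computing the leading principal minors, which are the Galois conjugates of the original minors. Alternatively, one can note that $\det$ and all principal minors are polynomials in $\tau=(1+\sqrt5)/2$, check them at $\tau'$, and verify that each is positive. This is a finite computation and I expect it to be the main, though routine, obstacle.

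Next I would show that $L$ is a root lattice of the right type. It is generated by the $e_\alpha$, each of norm $2$, so $L$ is a root lattice. It is irreducible because the diagram is connected: a decomposition $L=L_1\perp L_2$ would split the roots, since a root with components in both summands has norm $2=a+b$ with $a,b$ totally positive algebraic integers, and I expect an earlier lemma of the paper to exclude this. It remains to see that the type of $L$ is $X_n$, which means $\Delta(L)$ should realize the diagram. Here I would invoke \cref{injection}: the Weyl group $W$ generated by the reflections $s_{e_\alpha}$ preserves $L$, because $2(x\cdot e_\alpha)/2\in\cO$. The set $\Phi(L)$ is a finite root system containing the root system $We_\Delta$ of type $X_n$ and spanning the same space. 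If $\Phi(L)$ were strictly larger, its Weyl group would be a finite Coxeter group of rank $n$ properly containing $W(X_n)$. The only such containments in rank $\geq 3$ are $A_n\subset B_n$, $D_n\subset B_n$ and $D_4\subset F_4$, and these would require a root $x\in L$ which is not in $We_\Delta$. I would rule this out using the integrality of the coefficients of $x$ in the basis $e_\Delta$ together with $x\cdot x=2$. Alternatively, and more simply, I would define the type via the ordering given by $\Delta$ and appeal to the classification in \cref{injection} directly.
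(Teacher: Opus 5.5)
Your necessity argument, the integrality of $G$, the total positivity check (undoing sign flips on trees, and the minors $3-\tau'$, $4-2\tau'$, $5-3\tau'$ for the conjugate $H_3$/$H_4$ matrices), and irreducibility via \cref{lem:reducibility} are all fine. The gap is the last step, which carries most of the content of sufficiency. You must show that $L=\bigoplus\cO e_\alpha$ has no roots outside $We_\Delta$, i.e.\ that $\{e_\alpha\}$ is a fundamental system of $\Phi(L)$. Your proposal does not do this.

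Your list of overgroups to exclude is wrong. $A_n\subset B_n$ is not a full-rank reflection subgroup for $n\ge 4$ (only $A_3=D_3\subset B_3$). Genuine containments such as $A_7\subset E_7$, $A_8\subset E_8$, $D_8\subset E_8$, $B_4\subset F_4$ and $D_4\subset H_4$ are missing. More importantly, the exclusion itself is only announced. ``Integrality of the coefficients together with $x\cdot x=2$'' is not enough as stated: the coefficients $r_i$ lie in an arbitrary $\cO$, and the single equation $x\cdot x=2$ in $K$ bounds nothing. You need to apply it under every real embedding and invoke \cref{Kronecker}\,(3) to force the relevant linear forms in the $r_i$ into $\{0,\pm1,\pm\sqrt2\}$. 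Even then you must kill the $\pm\sqrt2$ cases, e.g.\ for $A_n$ when $\sqrt2\in K$, or the case $r_{n-1}=\pm\sqrt2/2$ for $D_n$.

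Your ``alternative'' is circular. The type of $L$ is read off from a fundamental system of the whole set $\Phi(L)$, and \cref{injection} only says that its diagram is \emph{some} finite Coxeter diagram. That $\{e_\alpha\}$ is such a fundamental system is exactly what must be proved.

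The paper closes this step in two ways:
\begin{itemize}
\item For $A_n$, $B_n$, $D_n$, it writes a root as $\sum r_i\alpha_i$ and rewrites $\beta\cdot\beta=2$ as a sum of squares of linear forms, e.g.\ $r_1^2+(r_1-r_2)^2+\cdots+r_n^2=2$. It applies \cref{Kronecker}\,(3) at all embeddings and deduces $r_i\in\Z$ (resp.\ $\Z[\sqrt2]$). So every root comes from the lattice over $\Z$ (resp.\ $\Z[\sqrt2]$), whose roots form the standard root system.
\item For $E_n$, $F_4$, $H_n$, it counts. $\Phi(L)$ is an irreducible finite root system of rank $n$ with at least $|\Phi(X_n)|$ roots. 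The only rank-$n$ types that large are $X_n$ itself, $B_6$ for $E_6$, and $H_4$ for $F_4$. The $B_6$ case has the same cardinality $72$, which forces equality. The $H_4$ case is excluded because $-\sqrt2$ is not an inner product of two roots of $H_4$.
\end{itemize}
You need an argument of this kind to finish your proof.
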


Note that \cref{main-intro1} holds for all totally real fields $K$,  without assuming $[K:\Q]<\infty$.
For more details,  see \cref{sec:scalar}.

When the $\cO$-rank is $2$,  the situation is drastically different. 
For any positive integer $n$,  let $\zeta_n\in\bar{K}$ denote a primitive $n$-th root of unity.
We define a directed graph with the set of vertices 
    \[
    Q_K= \{n > 1 \mid \zeta_{2n}+\zeta_{2n}^{-1}\in K\}
    \]
and the vertices $x, y \in Q_K$ are connected by an edge directed from $x$ to $y$ if $x$ divides $y$ and $y/x$ is a prime number. 
By an abuse of notation,  we denote this graph by $Q_K$.
We also define the partial order on $Q_K$ such that $x \preceq y$ if there is a path from $x$ to $y$.

Let $\pi_0(Q_K)$ denote the set of connected components of the graph $Q_K$ and define the subsets $\cP_K$ and $\cR_K$ of the power set of $Q_K$ as follows:
    \begin{align*}
    \cP_K &\coloneqq  \{ \{q\}  \mid \text{$q \in Q_K$ is a prime power} \},  \\
    \cR_K &\coloneqq  \{ C \in \pi_0(Q_K) \mid \text{$C$ contains an integer that is not a prime power} \}. 
    \end{align*}
Let   $\mu_\infty\subset\bar{K}^\times$ denote the group of all roots of unity.  
For each $C \in\cP_K\cup\cR_K$,  let $\mu(C)$ be the subgroup of $\mu_\infty$ generated by $\{ \zeta_{2n} \mid n \in C  \}$. 
The next theorem classifies all the rank $2$ root lattices.

\begin{theorem}\label{main-intro2}
For any root lattice $L$  over $\cO$ of rank $2$,  let $\mu(L)$ denote the subgroup of $\mu_\infty$ generated by $\{\zeta \in \mu_{\infty} \mid \zeta+\zeta^{-1} = \alpha\cdot\beta  \text{ for some $\alpha,  \beta\in\Phi(L)$} \}$.  
Then we have $\mu(L)=\mu(C)$ for some $C\in\cP_K\cup\cR_K$ and the map 
    \[
    \{ \text{root lattices over $\cO$ of rank $2$} \}_{/\simeq}
    \to \{\mu(C) \mid C\in\cP_K\cup\cR_K\};  \quad L \mapsto \mu(L).
    \]
is a bijection.

In particular when $[K:\Q]<\infty$,  there exists an irreducible root lattice over $\cO$ of type $I_2(m)$ ($m\geq 3$) if and only if $K$ satisfies the following conditions (i) and (ii):
\begin{itemize}
\item[(i)] We have $c(I_2(m))=2\cos(\pi/m)\in K$ \emph{i.e.,} $m\in Q_K$. 
\item[(ii)] The integer $m$ is a prime power or a maximal element of $Q_K$.
\end{itemize}
\end{theorem}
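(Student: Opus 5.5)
Our plan is to realize every rank $2$ root lattice inside a CM-type quadratic extension of $K$ and to read off the invariant $\mu(L)$ from the roots. Let $L$ be a root lattice of rank $2$ and $\alpha,\beta\in\Phi(L)$ linearly independent. Since the form is totally positive definite, $|\sigma(\alpha\cdot\beta)|<2$ for every real embedding $\sigma$. Moreover, every value $\alpha\cdot\beta$ for $\alpha,\beta\in\Phi(L)$ is of the form $\zeta+\zeta^{-1}$ with $\zeta$ a root of unity. This follows because $\Phi(L)$ is finite and stable under reflections, so the product of two reflections has finite order.

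We identify $L\otimes_\cO K$ with the quadratic extension $E=K(\zeta)$, where $\zeta+\zeta^{-1}=\alpha\cdot\beta$. Concretely, we send $\alpha\mapsto 1$ and $\beta\mapsto -\zeta$ (up to sign conventions) and use the bilinear form $x\cdot y=\mathrm{Tr}_{E/K}(x\bar y)$. This is totally positive because $E$ is CM. Under this identification the roots become exactly the roots of unity $\xi\in E$, since $x\cdot x=2$ means $x\bar x=1$ in every embedding. A totally positive unit of norm $1$ in a CM field that is of absolute value $1$ everywhere is a root of unity by Kronecker's theorem. Hence $\Phi(L)$ corresponds to a finite group of roots of unity $\mu\subset E^\times$, and $L$ is the $\cO$-order $\cO[\mu]$. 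One must check that $L$ is closed under multiplication: $\cO$-spans of roots of unity are rings. The isomorphism class of $L$ is then determined by $\mu(L)$, which is the group generated by the $\zeta$ with $\zeta+\zeta^{-1}$ occurring as $\alpha\cdot\beta$. Conversely, for each admissible group $\mu$ we have $\cO[\mu]$ with the trace form, and we need the bilinear form to be $\cO$-valued, i.e.\ $\xi+\xi^{-1}\in\cO$ for all $\xi\in\mu$. This gives injectivity and the description of the image.

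The main obstacle is characterizing which finite cyclic groups $\mu=\langle\zeta_{2N}\rangle$ arise as $\mu(L)$. The naive condition is $N\in Q_K$ together with the requirement that $\cO[\mu]$ has rank $2$, i.e.\ $[K(\zeta_{2N}):K]=2$. The subtlety is that the roots of $L=\cO[\zeta_{2n}]$ may form a larger group. $\Phi(L)$ consists of all roots of unity lying in the order $\cO[\zeta_{2n}]$. If $n,m\in Q_K$ with $m/n=p$ prime and $n$ not a prime power, then $\zeta_{2m}\in\cO[\zeta_{2n}]$. The mechanism is that $1-\zeta_{2n}^k$ is a unit when the order of $\zeta_{2n}^k$ is not a prime power, and this gives integrality of $\zeta_{2m}$ over the order. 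If instead $q$ is a prime power, the index $[\cO[\zeta_{2m}]:\cO[\zeta_{2q}]]$ is divisible by $p$ via a discriminant computation, so no enlargement occurs.

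We would prove these two facts by comparing relative discriminants of $\cO[\zeta_{2n}]$ over $\cO$: they equal $(\zeta_{2n}-\zeta_{2n}^{-1})^2$, and cyclotomic units show these coincide along edges between non-prime-powers. The conclusion is that the saturated groups $\Phi(L)$ are exactly $\mu(C)$, where $C$ is either a component containing a non-prime-power or a singleton prime power. For the final statement, $I_2(m)$ occurs exactly when $\mu(L)=\langle\zeta_{2m}\rangle$ is one of these groups, i.e.\ $m$ is a prime power or the maximum of its component. Finiteness of the component, guaranteed by $[K:\Q]<\infty$, makes this maximum exist.
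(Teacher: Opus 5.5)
Your outline is correct, and at the key step it takes a genuinely different route from the paper. For $\Phi(\cO[\zeta])=\mu_\infty\cap\cO[\zeta]$ you use Kronecker's theorem, where the paper uses Dirichlet's unit theorem. Kronecker applies because a root $x$ satisfies $x\bar x=1$ and $x+\bar x=\langle x,1\rangle\in\cO$, so $x$ is an algebraic integer with all conjugates on the unit circle.

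More substantially, you compare the orders $\cO[\zeta_{2n}]\subseteq\cO[\zeta_{2m}]$, $m=pn$, directly. One has $\zeta_{2m}^p=a+c_p\zeta_{2m}$ with $a\in\cO$ and
$c_p=\sin(p\pi/m)/\sin(\pi/m)=\zeta_{2m}^{1-p}(1-\zeta_m^p)/(1-\zeta_m)$,
so the index ideal is $(c_p)$; this is the square root of your discriminant ratio. Since $\zeta_m^p$ is a primitive $n$-th root of unity, $c_p$ is a unit when $n$ is not a prime power. When $n$ is a prime power, $c_p$ is a non-unit of $\Z[\zeta_{2m}^+]$, hence a non-unit of $\cO$, because $\cO\cap\Q(\zeta_{2m}^+)=\Z[\zeta_{2m}^+]$.

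The paper argues through ramification instead. \cref{prop:torsion_composite} invokes the coprime-discriminant criterion for rings of integers of composita. \cref{prop:torsion_prime_power} uses linear disjointness, faithful flatness of $\cO$ over $\Z[\zeta_{2\ell q}^+]$, and non-regularity of a semi-local ring at $p$. Your computation is shorter and avoids \cref{lem:field_isom,lem:faithfully-flat,cor:injective_ring_hom} entirely. The paper's version makes explicit that the dichotomy is the ramification of $\Q(\zeta_{2n})/\Q(\zeta_{2n}^+)$. These are the same fact, since $(\zeta_{2n}-\zeta_{2n}^{-1})^2$ is that relative discriminant.

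Several steps need tightening.

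\emph{Membership, not integrality.} The phrase ``integrality of $\zeta_{2m}$ over the order'' misplaces the issue: $\zeta_{2m}$ is always integral. What must be shown is membership, i.e.\ $c_p\in\cO^\times$. Likewise, in the prime-power case the index $(c_\ell)$ is a nontrivial ideal supported above $p$, not literally divisible by $p$.

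\emph{Finiteness of $\Phi(L)$.} Your appeal to finiteness fails when $[K:\Q]=\infty$, where the first part of the theorem is still asserted (and there $\mu(L)$ can be infinite). Use Kronecker as in \cref{cos1} instead.

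\emph{Closure under multiplication.} The claim that $\cO$-spans of roots of unity are rings is false in general. It needs proof here, because you took $\alpha,\beta$ merely linearly independent. Either take $\{\alpha,\beta\}$ to be an $\cO$-basis of roots, as the paper does in \cref{lem:rank2_root_lattice_model}, or argue via reflections: $s_xs_1(y)=x^2y$, so $\Phi(L)$ is a union of cosets of the group generated by squares of roots. A subgroup of $\mu_\infty$ has index at most $2$ over its squares, so $\Phi(L)$ is itself a group.

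\emph{From edges to components.} Passing from single edges to $\mu(\cO[\zeta_{2n}])=\mu(C)$ for the whole component needs the lcm-closure of components, from \cref{lem:Q_K_properties}(2),(3) applied along paths. The same fact is needed for the existence of a maximum in each component in the last clause; finiteness alone does not give a unique maximal element.
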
 
For more precise statement,  see \cref{summary>2},  \cref{thm:rank_2_root_lattice},  and \cref{thm:rank_2_root_lattice2}.
The case of $K=\Q$ of our result recovers the $ADE$ classification of the ordinary root lattices.
When $[K:\Q]=2$,   our classification concincides with the one obtained by Mimura in \cite{Mimura79}.

This article is organized as follows. 
In \cref{sec:preliminary},  we introduce necessary notations for root lattices and recall the classification of finite Coxeter systems.
In \cref{sec:root system},  we show that the set of roots $\Phi(L)$ of a root lattice $L$ has a fundamental system of roots assuming $|\Phi(L)|<\infty$.
We associate Coxeter-Dynkin diagrams with irreducible root lattices and observe that they correspond to finite Coxeter systems.
\cref{sec:scalar} is devoted to proving the following: for an irreducible root lattice of rank greater than $2$, the set of roots does not change under extension of scalars. 
In particular,  we see that an irreducible root lattice $L$ with $\rank(L)\geq3$ satisfies $|\Phi(L)|<\infty$, and we deduce \cref{main-intro1} from this.
The rank $2$ root lattices are studied in \cref{sec:rank2}.
First we observe a rank $2$ root lattice is isomorphic to an order $\cO[\zeta]$ in a quadratic extension $K(\zeta)$ of $K$.
The group of roots of unity in $\mathcal{O}[\zeta]$ is a complete invariant of its isomorphism class as a root lattice. 
We are thus reduced to studying possible orders of the form $\mathcal{O}[\zeta]$, which can be analyzed by examining the ramification at each prime.

\subsection*{Acknowledgment}

The authors would like to thank Tamotsu Ikeda and Hiroyuki Ochiai for valuable comments and kindly answering questions. 
R.S.  was supported by JSPS KAKENHI Grant Number JP24K16886. 
M.S.  was supported by JSPS KAKENHI Grant Number  JP22K13891. 
H.T.  was supported by JSPS KAKENHI Grant Number  JP23K12947.

\section{Preliminaries}\label{sec:preliminary}

\subsection{Definitions of root lattices}\label{sec:definition}

Let $K$ be a totally real number field and $\cO \coloneqq \cO_K$ its ring of integers.
We fix an embedding $K\hookrightarrow\R$ once for all and consider the total order on $K$ induced from this embedding. 
Suppose that $V$ is a $K$-vector space of dimension $n\in\Z_{>0}$ with a symmetric bilinear form
    \[
    V\times V \to K; \quad (x,  y) \mapsto x\cdot y.
    \]
Throughout this article,  we assume that this symmetric bilinear form is \emph{totally positive definite},  that is,  for any real embedding $\sigma \colon K \hookrightarrow \R$ and any non-zero vector $x\in V$,  we have $\sigma(x\cdot x)>0$.

\begin{definition}
An \emph{$\cO$-lattice} in $V$ is a finitely generated $\cO$-submodule $L\subset V$ with the property that there exists a basis $v_1,  v_2,  \ldots,  v_n$ of $V$ over $K$ such that $L=\cO v_1+\cO v_2+\cdots+\cO v_n$.
\end{definition}

We say that an $\cO$-lattice $L$ in $V$ is \emph{reducible} if there exist an orthogonal decomposition $V=V_1\oplus V_2$ with $V_1 \neq 0$ and $ V_2\neq0$, and $\cO$-lattices $L_i$ in $V_i$ for $i=1,  2$ such that $L=L_1\oplus L_2$. 
A lattice is called \emph{irreducible} if it is not reducible.

\begin{definition}
\begin{itemize}\setlength{\itemsep}{5pt}
\item[(1)] An $\cO$-lattice $L \subset V$  is called \emph{integral} if $x\cdot y\in\cO$ for all $x,  y\in L$.
\item[(2)] Set $\Phi(L) \coloneqq \{x\in L \mid x\cdot x=2\}$.
An element of $\Phi(L)$ is called a \emph{root} of $L$.
\item[(3)] An $\cO$-lattice $L\subset V$ is called a \emph{root lattice} over $\cO$ if it is integral and generated by $\Phi(L)$.
\end{itemize}
\end{definition}

From now until the end of this article,  let $L\subset V$ be a root lattice over $\cO$ and set $\Phi\coloneqq\Phi(L)$.
Let $K_{\geq1}$ denote the elements of $K$ not smaller than $1$ with respect to the order induced from the fixed embedding $K\hookrightarrow\R$.
We set $\cO_{\geq1}=K_{\geq1}\cap\cO$.

\begin{definition}\label{def_fund}
A subset $\Delta\coloneqq\Delta(L)$ of $\Phi$ is called a \emph{fundamental system} of roots if it satisfies the following conditions (i) and (ii):
\begin{itemize}\setlength{\itemsep}{5pt}
\item[(i)] The set $\Delta$ is a basis of the vector space $V$.
\item[(ii)] Each root $\alpha\in\Phi$ can be written as $\alpha=\sum_{\beta\in\Delta}c_\beta\beta$ with $c_\beta\in\cO_{\geq1}\sqcup\{0\}$ for all $\beta\in\Delta$ or $-c_\beta\in\cO_{\geq1}\sqcup\{0\}$ for all $\beta\in\Delta$.
\end{itemize}
\end{definition}

A priori we do not know the existence of a fundamental system of roots.
This is proved in \cref{sec:root system} when $[K:\Q]<\infty$ and in \cref{sec:scalar} when $\rank(L)>2$ for general $K$.
Note that a fundamental system of roots may not exist when $\rank(L)=2$ and $[K:\Q]=\infty$.

The following lemma is a well-known result due to Kronecker.
For the reader's convenience, we include a proof. 

\begin{lemma}[Kronecker's theorem]\label{Kronecker}
Let $\alpha$ be a non-zero algebraic integer.
\begin{itemize}\setlength{\itemsep}{5pt}
\item[(1)] If all conjugates of $\alpha$ over $\mathbb{Q}$ have absolute value at most $1$, then $\alpha$ is a root of unity. 
\item[(2)] If all conjugates of $\alpha$ over $\Q$ are real numbers with absolute values at most $2$,  then $\alpha=2\cos(r\pi)$ for some $r\in\Q$.
\item[(3)] If all conjugates of $\alpha$ over $\Q$ are real numbers with absolute values at most $\sqrt{2}$,  then $\alpha \in \{0,  \pm 1, \pm \sqrt{2}\}$.
\end{itemize}
\end{lemma}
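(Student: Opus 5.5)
For (1), I will use the standard finiteness argument. Let $d$ be the degree of $\alpha$ over $\Q$ and let $\alpha_1,\dots,\alpha_d$ be its conjugates. For each $k\ge1$ the power $\alpha^k$ is an algebraic integer whose conjugates are among $\alpha_1^k,\dots,\alpha_d^k$, all of absolute value at most $1$. Consider the polynomial $P_k(X)=\prod_{i}(X-\alpha_i^k)$. Its coefficients are symmetric functions of the $\alpha_i$, so they are rational; they are also algebraic integers, hence in $\Z$. They are elementary symmetric functions of numbers of absolute value at most $1$, so the $j$-th coefficient is bounded by $\binom{d}{j}$. Hence only finitely many polynomials $P_k$ occur. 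Since $\alpha^k$ is a root of $P_k$, the set $\{\alpha^k\mid k\ge1\}$ is finite. So $\alpha^k=\alpha^l$ for some $k<l$, and as $\alpha\neq0$ this gives $\alpha^{l-k}=1$. Note that the hypothesis $\alpha\ne0$ is used exactly here.

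For (2), write $\alpha=\beta+\beta^{-1}$, where $\beta$ is a root of $X^2-\alpha X+1$. Then $\beta$ is an algebraic integer, being integral over $\Z[\alpha]$, and it is nonzero. Every conjugate $\beta'$ of $\beta$ over $\Q$ satisfies $\beta'+\beta'^{-1}=\alpha'$ for some conjugate $\alpha'$ of $\alpha$, obtained by applying an embedding to the relation. Since $\alpha'$ is real with $|\alpha'|\le2$, the roots of $X^2-\alpha'X+1$ are complex conjugates of product $1$, so $|\beta'|=1$. By (1), $\beta$ is a root of unity, say $\beta=e^{2\pi i r'}$ with $r'\in\Q$. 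Then $\alpha=2\cos(2\pi r')$, and we take $r=2r'$.

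For (3), apply (2) to write $\alpha=2\cos(r\pi)=\zeta+\zeta^{-1}$ with $\zeta$ a primitive $N$-th root of unity. The conjugates of $\alpha$ are exactly $2\cos(2\pi k/N)$ for $k$ coprime to $N$, since $\Gal(\Q(\zeta)/\Q)$ acts transitively on primitive $N$-th roots of unity and $\Q(\alpha)\subset\Q(\zeta)$. The hypothesis therefore says that $|\cos(2\pi k/N)|\le 1/\sqrt2$ for all $k$ coprime to $N$. Taking $k=1$ gives $|\cos(2\pi/N)|\le1/\sqrt2$, which forces $N\le 8$ or $N\in\{1,2\}$ trivially allowed. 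Concretely, the angle $2\pi/N$ must lie in $[\pi/4,3\pi/4]$ unless $N\le2$, so $N\le 8$.

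I then check $N\in\{1,\dots,8\}$ directly:
\begin{itemize}
\item $N=1,2$ give $\alpha=\pm2$, excluded.
\item $N=3,6$ give $\alpha=\mp1$.
\item $N=4$ gives $0$.
\item $N=8$ gives $\pm\sqrt2$.
\item $N=5,10$ give $(-1\pm\sqrt5)/2$; the conjugate $(-1-\sqrt5)/2\approx-1.618$ violates the bound. The case $N=10$ is already excluded by $N\le8$.
\item $N=7$ gives conjugates including $2\cos(6\pi/7)\approx-1.80$, excluded.
\end{itemize}
The remaining values are therefore $0,\pm1,\pm\sqrt2$.

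The main obstacle is the reduction to small $N$ in (3), but using $k=1$ already gives $N\le 8$. The only care needed is the identification of the conjugates of $2\cos(2\pi/N)$.
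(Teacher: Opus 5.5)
Your proof is correct and follows essentially the same route as the paper: bounded integer symmetric functions plus pigeonhole for (1) (your direct observation that $\{\alpha^k\}$ is finite is a slight streamlining of the paper's permutation argument), the substitution $\alpha=\beta+\beta^{-1}$ to reduce (2) to (1), and a well-chosen conjugate to bound the order of the root of unity in (3). The only real difference is in (3). The paper replaces $\alpha$ by $-\alpha$ if necessary, so that some conjugate equals $2\cos(\pi/n)$, and gets $n\le 4$ at once. Your conjugate $2\cos(2\pi/N)$ only yields $N\le 8$, which forces your (correctly executed) elimination of $N=5,7$. Incidentally, $N=10$ would give $(1\pm\sqrt5)/2$ rather than $(-1\pm\sqrt5)/2$, but that case is excluded anyway.
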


\begin{proof}
Let $\bar{\Z}$ denote the ring of all algebraic integers.
Let $\{\alpha_1,  \alpha_2,  \ldots,  \alpha_r\}$ be the set of all conjugates of $\alpha$ over $\Q$.

\item[(1)]  We present a proof for completeness, adapted from \cite[Theorem 1.5.9]{Bombieri09}. 
For any  integer $k \in \{0,1,\ldots,  r\}$,  let 
    \[
    s_k(t_1,  t_2,  \ldots,  t_r)
    \coloneqq \sum_{1\leq i_1<i_2<\cdots<i_k\leq r}t_{i_1}t_{i_2}\cdots t_{i_k}
    \]
be the elementary symmetric polynomial of degree $k$.
Then for any positive integer $n$,  we have $s_k(\alpha_1^n,  \alpha_2^n,  \ldots,  \alpha_r^n)\in\bar{\Z}\cap\Q=\Z$. 
On the other hand,  since we have $|\alpha_j|\leq1$ for each $j$, it follows that 
    \[
    \sum_{k=0}^r |s_k(\alpha_1^n,  \alpha_2^n,  \ldots,  \alpha_r^n)| 
    \leq \sum_{k=0}^r s_k(1,\ldots, 1) = \sum_{k=0}^r 
        \begin{pmatrix}
        r \\
        k
        \end{pmatrix}
    = 2^r.
    \]
In particular,  $s_k(\alpha_1^n,  \alpha_2^n,  \ldots,  \alpha_r^n)$ is an integer between $-2^r$ and $2^r$ for all $k=0,1,\ldots,  r$ and $n\in\Z_{>0}$. 
By the pigeonhole principle,  there are integers $m,  n\in\Z_{>0}$ with $m<n$ such that 
\[
s_k(\alpha_1^m,  \alpha_2^m,  \ldots,  \alpha_r^m)=s_k(\alpha_1^n,  \alpha_2^n,  \ldots,  \alpha_r^n)\eqqcolon s_k
\]
 for all $k=0,1,\ldots,  r$.
Then both $\{\alpha_j^m\}_{j=1}^r$ and $\{\alpha_j^n\}_{j=1}^r$ are the set of the roots of the polynomial $\sum_{k=0}^r (-1)^ks_kt^{r-k}$,  and hence there exists a permutation $\tau\in\fS_r$ satisfying $\alpha_j^m=\alpha_{\tau(j)}^n$ for any $j=1,2,\ldots,  r$. 
If $d$ denotes the order of $\tau$, then we have 
    \[
    \alpha_j^{m^d} = (\alpha_{\tau(j)}^n)^{m^{d-1}}
    = (\alpha_{\tau^2(j)}^{n^2})^{m^{d-2}} = \cdots 
    = \alpha_{\tau^d(j)}^{n^d} = \alpha_j^{n^d},  
    \]
which implies $\alpha_j$ is a root of unity.

\item[(2)] We may assume that $\alpha\neq\pm2$.
Let $\zeta$ be a root of the polynomial $t^2-\alpha t+1$.
Then $\zeta\in\{(\alpha\pm\sqrt{\alpha^2-4})/2\}$.
It follows from $\alpha^2-4<0$ that $|\zeta|=1$, and the other root of $t^2-\alpha t+1$ is $\zeta^{-1}$.
In particular $\alpha=\zeta+\zeta^{-1}$.
Hence it suffices to show that $\zeta$ is a root of unity.

Note that $\zeta\in\bar{\Z}$ since is $\bar{\Z}$ is integrally closed.
For each $\alpha_j$,  set $\zeta_j=(\alpha_j+\sqrt{\alpha_j^2-4})/2$.
Then all conjugates of $\zeta$ over $\Q$ is $\zeta_1^{\pm},  \zeta_2^{\pm1},  \ldots,  \zeta_r^{\pm1}$.
Since all of them have  absolute value $1$,  it follows from (1) that $\zeta$ is a root of unity.  

\item[(3)] We see from (2) that,  by replacing $\alpha$ with $-\alpha$ if necessary, there exist an embedding $\iota \colon \mathbb{Q}(\alpha) \hookrightarrow \mathbb{R}$ 
and a positive integer $n$ such that $\iota(\alpha) = 2 \cos(\pi/n)$. 
Since $|\iota(\alpha)| \le \sqrt{2}$ by assumption, we deduce that $n \le 4$.
Consequently,  $\alpha \in \{0, \pm 1, \pm \sqrt{2}\}$.
\end{proof}

\begin{corollary}\label{cos1}
For all $\alpha,  \beta\in\Phi(L)$,  we have $\alpha\cdot\beta=2\cos(r\pi)$ with some $r\in\Q$.
\end{corollary}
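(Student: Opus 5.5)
The plan is to apply \cref{Kronecker}(2) to the algebraic integer $a\coloneqq\alpha\cdot\beta\in\cO$. Integrality of $L$ gives $a\in\cO$, so $a$ is an algebraic integer. The remaining work is to check that every conjugate of $a$ over $\Q$ is real with absolute value at most $2$.

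Since $K$ is totally real, every embedding $\sigma\colon K\hookrightarrow\C$ lands in $\R$. Every conjugate of $a$ over $\Q$ has the form $\sigma(a)$ for some such $\sigma$, because any embedding of $\Q(a)$ extends to an embedding of $K$ (or of a finite subextension containing $a$, which is still totally real). So all conjugates of $a$ are real, and it remains to bound $|\sigma(a)|$.

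Fix $\sigma$ and apply total positive definiteness to $x=\alpha+t\beta$ for $t\in\Q$. Since $\alpha\cdot\alpha=\beta\cdot\beta=2$ are fixed by $\sigma$,
\[
\sigma\bigl((\alpha+t\beta)\cdot(\alpha+t\beta)\bigr)=2+2t\,\sigma(a)+2t^2\geq 0
\]
for all $t\in\Q$, with strict inequality whenever $\alpha+t\beta\neq0$. By continuity the quadratic polynomial $2t^2+2\sigma(a)t+2$ is nonnegative for all real $t$. Its discriminant is therefore nonpositive, which gives $\sigma(a)^2\leq 4$, that is, $|\sigma(a)|\leq 2$.

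If $a=0$ the conclusion holds with $r=1/2$. Otherwise $a$ is a nonzero algebraic integer whose conjugates are all real of absolute value at most $2$, so \cref{Kronecker}(2) gives $a=2\cos(r\pi)$ for some $r\in\Q$. Here $a$ is regarded as a real number via the fixed embedding $K\hookrightarrow\R$.

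The only delicate point is the Cauchy--Schwarz-type bound under each embedding $\sigma$. One must work with $\Q$-scalars, or extend to the real form $V\otimes_{K,\sigma}\R$, since $t$ cannot be taken real inside $V$. The continuity argument above handles this. Everything else is a direct citation of \cref{Kronecker}(2).
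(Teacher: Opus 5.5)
Your proposal is correct and follows the paper's argument: use total positivity to show $|\sigma(\alpha\cdot\beta)|\leq 2$ for every embedding $\sigma$, then invoke \cref{Kronecker}(2). The paper gets this bound more directly by taking only $t=\pm1$, since $\sigma\bigl((\alpha\pm\beta)\cdot(\alpha\pm\beta)\bigr)=4\pm2\sigma(\alpha\cdot\beta)\geq 0$; this makes your continuity-and-discriminant step (the ``delicate point'') unnecessary, while your separate treatment of $\alpha\cdot\beta=0$ is a harmless bit of care the paper leaves implicit.
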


\begin{proof}
Set $c=\alpha\cdot\beta\in\cO$.
Since $(\alpha\pm\beta)\cdot(\alpha\pm\beta)=4\pm2c$,  for any embedding $\sigma\colon K \hookrightarrow \R$ we have $\sigma(4\pm2c)\geq0$,  \emph{i.e.} $-2\leq\sigma(c)\leq2$.
Hence the assertion follows from \cref{Kronecker} (2).
\end{proof}

We recall another basic fact on cyclotomic units for later use.

\begin{lemma}\label{cyclotomic_unit}
Let $m$ and $k$ be positive integers with $m\geq2$. 
Set
    \[
    R \coloneqq \Z[2\cos(\pi/m)], \quad 
    c_k \coloneqq \frac{\sin(k\pi/m)}{\sin(\pi/m)}. 
    \]
Then $c_k\in R$.
Moreover, if $m$ and $k$ are coprime, then $c_k \in R^\times$. 
\end{lemma}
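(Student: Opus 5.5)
Set $\theta \coloneqq \pi/m$ and $t \coloneqq 2\cos\theta$, so that $R=\Z[t]$. The plan is to show $c_k\in R$ through the Chebyshev-type recursion, and then to show $c_k$ is a unit by writing down an explicit inverse that also lies in $R$.

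\textbf{Integrality.} From the identity
\[
\sin((k+1)\theta)+\sin((k-1)\theta)=2\cos\theta\,\sin(k\theta)
\]
we get $c_{k+1}=t\,c_k-c_{k-1}$ for all $k\geq 1$, where $c_0=0$ and $c_1=1$. Induction on $k$ then gives $c_k\in\Z[t]=R$ for every $k\geq0$. Note that $\sin\theta\neq0$ because $m\geq2$, so every $c_k$ is well defined. In other words, $c_k=U_{k-1}(t/2)$ with $U_{k-1}$ a Chebyshev polynomial of the second kind.

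\textbf{Units.} Assume $\gcd(k,m)=1$. Then $\sin(k\theta)\neq0$, since otherwise $m\mid k$, and with $m\geq2$ this contradicts coprimality. Hence $c_k\neq0$. Because $k$ is invertible modulo $2m$ or modulo $m$, we can choose a positive integer $l$ with $kl\equiv \pm1 \pmod{2m}$. Concretely:
\begin{itemize}
\item if $k$ is odd or $m$ is odd, $k$ is invertible modulo $2m$ after we replace $k$ by $k+m$ if needed, which changes $c_k$ only by a sign since $\sin((k+m)\theta)=-\sin(k\theta)$;
\item if $k$ is even and $m$ is odd, we use $k+m$, which is odd.
\end{itemize}
Now put $\theta'\coloneqq k\theta$ and consider
\[
d \coloneqq \frac{\sin(l\theta')}{\sin\theta'} = \frac{\sin(kl\theta)}{\sin(k\theta)}.
\]
The same recursion, with $2\cos\theta'$ in place of $t$, shows that $d\in\Z[2\cos(k\theta)]$. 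Moreover $2\cos(k\theta)=T$-polynomial in $t$ (it equals $\zeta^k+\zeta^{-k}$ with $\zeta=e^{i\theta}$, and this is a polynomial in $\zeta+\zeta^{-1}$), so $d\in R$. Finally, $kl\equiv\pm1 \pmod{2m}$ gives $\sin(kl\theta)=\pm\sin\theta$, hence
\[
c_k\,d=\frac{\sin(kl\theta)}{\sin\theta}=\pm1,
\]
and so $c_k\in R^\times$.

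\textbf{Main obstacle.} The only delicate point is the parity bookkeeping in choosing $l$, which must satisfy $kl\equiv\pm1 \pmod{2m}$ and not just modulo $m$. This is settled by the shift $k\mapsto k+m$ described above: it costs only a sign, and afterwards $\gcd(k,2m)=1$. In the remaining case, where $m$ is even and $k$ is odd, we already have $\gcd(k,2m)=1$ and no shift is needed.
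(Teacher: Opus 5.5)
Your proof is correct. The integrality step matches the paper's: the recursion $c_{k+1}=tc_k-c_{k-1}$, i.e.\ $c_k=U_{k-1}(t/2)$.

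For the unit property you take a genuinely different route. The paper rewrites $c_k=\zeta_{2m}^{1-k}(1-\zeta_m^k)/(1-\zeta_m)$ and cites Washington for the fact that this is a unit of $\Z[\zeta_{2m}]$. It then descends via $R\cap\Z[\zeta_{2m}]^\times=R^\times$, which tacitly uses $\Z[\zeta_{2m}]\cap\R=\Z[\zeta_{2m}^+]$, the ring of integers of the maximal real subfield. You instead exhibit the inverse $d=\sin(kl\theta)/\sin(k\theta)$ directly inside $R$. This needs only the same recursion applied to the angle $k\theta$ and the fact that $2\cos(k\theta)\in\Z[t]$. It is the trigonometric form of the standard argument that $(1-\zeta^{kl})/(1-\zeta^k)$ inverts the cyclotomic unit $(1-\zeta^k)/(1-\zeta)$. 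Your version is self-contained and never leaves $R$, so it needs neither the cyclotomic-unit citation nor the descent step. The paper's version is shorter given the literature and places $c_k$ among the familiar cyclotomic units.

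One correction: your ``main obstacle'' is not real. For $j=1+am$ one has $\sin(j\pi/m)=(-1)^a\sin(\pi/m)$, and similarly up to sign for $j\equiv-1\pmod m$. So any positive $l$ with $kl\equiv\pm1\pmod m$ already gives $c_k d=\pm1$, and you only need $k$ invertible modulo $m$. The shift $k\mapsto k+m$ and the parity case split can be dropped; as written, the two bullets also overlap confusingly.
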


\begin{proof}
Let $U_n(t)$ be the Chebyshev polynomial of the second kind.
It is a polynomial with integer coefficients of degree $n$, satisfying
    \[
    U_n(\cos \theta) = \frac{\sin((n+1)\theta)}{\sin \theta}.
    \]
Set $q_n(t) \coloneqq U_n(t/2)$.
From the recurrence relation $q_{n+1}(t) = t q_{n}(t)-q_{n-1}(t)$,  we see that $q_n(t)$ is a monic polynomial in $\Z[t]$.
Hence $c_k=q_{k-1}(2\cos(\pi/m))$ is in $R$.

Suppose that $m$ and $k$ are relatively prime.
We then have 
    \[
     c_ k = \frac{\zeta_{2m}^k-\zeta_{2m}^{-k}}{\zeta_{2m}-\zeta_{2m}^{-1}}
     = \zeta_{2m}^{1-k}\frac{1-\zeta_m^k}{1-\zeta_m},  
    \]
which is a unit of $\Z[\zeta_{2m}]$ by \cite[Propoisition 2.8,  Lemma 8.1]{Washington97}.
Hence it follows that $c_k$ is in $R\cap\Z[\zeta_{2m}]^\times=R^\times$.
\end{proof}

We define the Weyl group as usual.
For a non-zero vector $\alpha\in V\setminus\{0\}$,  let $s_\alpha\in\GL(V)$ denote the reflection with respect to the hyperplane $H_\alpha\coloneqq\{v\in V\otimes_K\R \mid \alpha\cdot v=0\}$.
Precisely,  we set 
    \[
    s_\alpha(v) \coloneqq v-2\frac{\alpha\cdot v}{\alpha\cdot \alpha}\alpha,  \qquad v\in V.
    \]

\begin{definition}
The \emph{Weyl group} $W \coloneqq W(L)$ of $L$ is the subgroup of $\GL(V)$ generated by the reflections $\{s_\alpha \mid \alpha\in\Phi(L)\}$.
\end{definition}
Since reflections preserve the symmetric bilinear form on $V$,  the Weyl group $W$ is a subgroup of the orthogonal group $\O(V)$.
In particular $\Phi$ is stable under $W$.

\subsection{Finite Coxeter systems}\label{sec:recollection}

This subsection is devoted to recalling basic facts about finite Coxeter groups. 

\begin{definition}
A group $W$ is called a \emph{Coxeter group} if it has a presentation
    \[
    W = \langle s_1,  s_2,  \ldots,  s_n \mid (s_is_j)^{m_{ij}} = 1 \rangle,  
    \]
 where $m_{ij}\in\Z\sqcup\{\infty\}$ with the property that $m_{jj}=1$ and $m_{ij}=m_{ji}>1$ for $i\neq j$.
 Set $S \coloneqq \{s_1,  s_2,  \ldots,  s_n\}$,  then the pair $(W,  S)$ is called a \emph{Coxeter system}.
 
The \emph{Schl\"afli matrix} of $(W,  S)$ is the symmetric matrix $A(W,S) \coloneqq (a_{ij})$ given by $a_{ij} = -\cos(\pi/m_{ij})$.
\end{definition}

It is known that the finite Coxeter groups are precisely the finite reflection groups.
In this section,  suppose that $(W,  S)$ is a Coxeter system.

\begin{proposition}[\cite{Humphreys12},   Theorem 6.4]
The following are equivalent.
\begin{itemize}\setlength{\itemsep}{5pt}
\item[(i)] $|W|<\infty$.
\item[(ii)] $A(W,  S)$ is positive-definite.
\item[(iii)] $W$ is a finite reflection group.
\end{itemize}
\end{proposition}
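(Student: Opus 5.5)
The plan is to prove the cycle of implications (i)$\Rightarrow$(ii)$\Rightarrow$(iii)$\Rightarrow$(i), working throughout with the geometric representation of $(W,S)$. Let $E$ be a real vector space with basis $e_1,\ldots,e_n$, and define the symmetric bilinear form $B(e_i,e_j)\coloneqq a_{ij}=-\cos(\pi/m_{ij})$, where $a_{ij}=-1$ if $m_{ij}=\infty$. Each $s_i$ acts on $E$ by $\sigma_i(v)=v-2B(e_i,v)e_i$. One checks directly that $\sigma_i\sigma_j$ has order $m_{ij}$: on the plane spanned by $e_i,e_j$ it is a rotation by $2\pi/m_{ij}$ when $m_{ij}<\infty$, and it fixes the $B$-orthogonal complement of that plane. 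Hence we obtain a homomorphism $\sigma\colon W\to\GL(E)$ preserving $B$.

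\textbf{(ii)$\Rightarrow$(iii).} If $A(W,S)$ is positive definite, then $B$ is an inner product and each $\sigma_i$ is a Euclidean reflection. The image $\sigma(W)$ is generated by reflections, and we need it to be finite. For this I would use the Tits cone argument. Let $C$ be the chamber $\{f\in E^*: f(e_i)>0 \text{ for all } i\}$. The key step is the lemma that $w(C)\cap C\neq\emptyset$ forces $w=1$, proved by induction on the length of $w$ via the usual exchange-type claim: $\ell(ws_i)>\ell(w)$ implies $w(C)\subset\{f(e_i)>0\}$ for the contragredient action. This lemma gives two things at once.
\begin{itemize}
\item[(a)] $\sigma$ is faithful.
\item[(b)] $\sigma(W)$ acts discretely on $E^*$, so $\sigma(W)$ is a discrete subgroup of $\GL(E^*)$.
\end{itemize}
Since $\sigma(W)$ also preserves a positive definite form, it lies in the compact group $\O(E)$. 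A discrete subgroup of a compact group is finite, so $W\cong\sigma(W)$ is a finite group generated by reflections. This proves (iii), and also (i).

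\textbf{(iii)$\Rightarrow$(i).} This is essentially definitional. A finite reflection group is finite, and its standard presentation by simple reflections is the Coxeter presentation; I would cite this fact rather than reprove it. The real content lies in the other two implications.

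\textbf{(i)$\Rightarrow$(ii).} If $W$ is finite, average any inner product over $\sigma(W)$ to get a $W$-invariant inner product $\langle\,,\rangle$ on $E$. First show that $\sigma$ is faithful, or at least that $E$ decomposes appropriately. Then, for an irreducible Coxeter system, show that $E$ is an irreducible $W$-module. This uses the fact that any proper invariant subspace either lies in the radical of $B$ or contains all $e_i$, by connectedness of the Coxeter graph. By Schur's lemma, $B$ is then a scalar multiple of $\langle\,,\rangle$ on the quotient by the radical. Since $B(e_i,e_i)=1>0$, the scalar is positive. To rule out a nonzero radical, use finiteness: a nonzero radical would allow nontrivial unipotent elements, which have infinite order. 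Therefore $B$ is positive definite.

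The main obstacle is the chamber lemma $w(C)\cap C\neq\emptyset\Rightarrow w=1$, which requires the careful length and exchange induction on dihedral subgroups. I would follow Humphreys \S5.4 and \S6.4 closely for that step.
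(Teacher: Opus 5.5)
The paper gives no proof of this statement; it only cites Humphreys' Theorem 6.4. Your outline follows Humphreys' argument: the geometric representation, then the Tits chamber lemma (giving faithfulness and discreteness) plus compactness of $\mathrm{O}(E)$ for (ii)$\Rightarrow$(iii), then averaging, ``invariant subspaces lie in the radical'', and Schur for (i)$\Rightarrow$(ii).

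There is a small slip in the chamber lemma. The hypothesis must be $\ell(s_iw)>\ell(w)$, not $\ell(ws_i)>\ell(w)$, because $(wf)(e_i)=f(w^{-1}e_i)$ depends on the sign of $w^{-1}e_i$. As you wrote it, the lemma is false. Take $w=s_is_j$ with $m_{ij}\ge 3$. Then $\ell(ws_i)=3>2=\ell(w)$, yet $(wf)(e_i)=-f(e_i)+2a_{ij}f(e_j)<0$ for every $f\in C$.

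The genuine gap is how you kill the radical $R$ of $B$ in (i)$\Rightarrow$(ii). You say $R\neq 0$ ``would allow nontrivial unipotent elements''. Elements of $W$ acting trivially on $E/R$ are indeed unipotent, since $W$ fixes $R$ pointwise. But you give no reason why a nontrivial such element exists. Producing one (in the affine case these are the translations) needs far more structure than you have at that point. Relatedly, you call $E$ irreducible before knowing $R=0$; your lemma only gives irreducibility of $E/R$.

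The missing idea is complete reducibility, which your averaged inner product $\langle\,,\rangle$ already supplies.
\begin{itemize}
\item $R$ is $W$-stable because $W$ preserves $B$. Hence $U\coloneqq R^{\perp}$, the orthogonal complement for $\langle\,,\rangle$, is a $W$-stable complement to $R$.
\item $U\neq 0$, because $B(e_i,e_i)=1$ forces $R\neq E$.
\item By your lemma, either $U$ contains every $e_i$, so $U=E$ and $R=0$; or $U\subset R$, which is impossible since $U\cap R=0$ and $U\neq 0$.
\end{itemize}
So $B$ is nondegenerate and $E$ is irreducible. Now write $B(x,y)=\langle Tx,y\rangle$ with $T$ self-adjoint and $W$-equivariant. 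Any real eigenspace of $T$ is a nonzero $W$-stable subspace, hence all of $E$. Therefore $T=\lambda\cdot\mathrm{id}$ with $\lambda=B(e_1,e_1)/\langle e_1,e_1\rangle>0$, and $B$ is positive definite. This is how Humphreys argues. Faithfulness of $\sigma$ plays no role in this direction, so you can drop that step.
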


The finite Coxeter groups are classified by using the Coxeter-Dynkin diagrams.

\begin{definition}
The \emph{Coxeter-Dynkin diagram} of $(W,  S)$ is a labelled graph associated to $S=\{s_1,  s_2 ,  \ldots,  s_n\}$ as follows.
The vertex set is one-to-one correspondence with $S$.
For distinct $s_i,  s_j\in\Delta$,  the vertices corresponding to $s_i$ and $s_j$ are connected by an edge if and only if $m_{ij}\geq3$,  equivalently $s_i$ and $s_j$ do not commute. 
If $m_{ij}\geq4$,  this edge is labelled with $m_{ij}$.
\end{definition}

Note that two Coxeter systems are isomorphic if and only if they have the same Coxeter-Dynkin diagram. 

A Coxeter group or a Coxeter system is said to be \emph{irreducible} if the corresponding Coxeter-Dynkin diagrams are connected.
Each Coxeter group is uniquely written as a direct product of irreducible Coxeter groups. 
The classification of irreducible finite Coxeter groups is as follows.

\begin{theorem}[\cite{Humphreys12},  Theorem 2.7]\label{Coxeter_classification}
The Coxeter-Dynkin diagrams of irreducible finite Coxeter systems are listed below: 

\noindent\begin{minipage}{0.4\textwidth}
    \begin{align*}
    &A_n \quad \xygraph{
    \bullet  - [r]
    \bullet  - [r] \cdots - [r]
    \bullet  - [r]
    \bullet}   \\[5pt]
    &B_n \quad \xygraph{
    \bullet  - [r]
    \bullet  - [r] \cdots - [r]
    \bullet  -^4 [r]
    \bullet }   \\[5pt]
    &D_n \quad \xygraph{
    \bullet  - [r]
    \bullet  - [r] \cdots - [r]
    \bullet  (
        - []!{+(1,.5)} \bullet ,
        - []!{+(1,-.5)} \bullet )}   \\[5pt]
    &H_3 \quad \xygraph{
    \bullet  -^5 [r]
    \bullet  - [r] 
    \bullet }   \\[5pt]
    &H_4 \quad \xygraph{
    \bullet  -^5 [r]
    \bullet  - [r] 
    \bullet  - [r]
    \bullet }   \\[5pt] 
    &I_2(m) \quad \xygraph{
    \bullet  -^m [r]
    \bullet }  \qquad (m\geq5)
    \end{align*}
\end{minipage}\qquad
\begin{minipage}{0.5\textwidth}
    \begin{align*}
    &E_6 \quad \xygraph{
    \bullet ) - [r]
    \bullet  - [r]
    \bullet  (
        - [d] \bullet ,
        - [r] \bullet 
        - [r] \bullet 
    )}  \\
    &E_7 \quad \xygraph{
    \bullet  - [r]
    \bullet  - [r]
    \bullet  (
        - [d] \bullet ,
        - [r] \bullet 
        - [r] \bullet 
        - [r] \bullet 
    )}  \\
    &E_8 \quad \xygraph{
    \bullet  - [r]
    \bullet  - [r]
    \bullet  (
        - [d] \bullet ,
        - [r] \bullet 
        - [r] \bullet 
        - [r] \bullet 
        - [r] \bullet 
     )}  \\
    &F_4 \quad \xygraph{
    \bullet  - [r]
    \bullet  -^4 [r] 
    \bullet - [r]
    \bullet 
    )} 
    \end{align*}
\end{minipage}\vspace{10pt} 
\\
Here,  $A_n$ \ ($n\geq1$),  $B_n$ \ ($n\geq2$),  and $D_n$ \ ($n\geq4$) have $n$ vertices.
Conversely,  each diagram has the associated finite Coxeter system.
\end{theorem}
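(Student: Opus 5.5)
This is a cited classical result (\cite[Theorem 2.7]{Humphreys12}), so rather than reprove it in full I would follow the standard argument. The plan is to reduce everything to positive-definiteness of the Schl\"afli matrix, via the equivalence (i)$\Leftrightarrow$(ii) of the preceding proposition.

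First I would translate a Coxeter--Dynkin diagram $\Gamma$ into a quadratic form $q_\Gamma(x)=\sum_{i,j}a_{ij}x_ix_j$ with $a_{ij}=-\cos(\pi/m_{ij})$. The system is finite exactly when $q_\Gamma$ is positive definite. Positive definiteness passes to full subgraphs, since these correspond to principal submatrices. It also passes to graphs obtained by lowering an edge label, because decreasing $m_{ij}$ raises $a_{ij}$ toward $0$. The comparison uses vectors with nonnegative coordinates, justified by $q(|x|)\le q(x)$, as all off-diagonal entries are $\le 0$.

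Next I would show that every connected positive definite diagram avoids a list of forbidden configurations. For each configuration I exhibit an explicit vector $x\ne0$ with $q(x)\le 0$; these are the affine (extended) diagrams $\tilde A_n,\tilde B_n,\tilde C_n,\tilde D_n,\tilde E_{6,7,8},\tilde F_4,\tilde G_2$, plus a few with labels $5$ or $\infty$. For the cycle $\tilde A_n$, the all-ones vector gives $q=0$, so $\Gamma$ is a tree. Taking $x$ equal to the sum of the vertices of a path shows that $\sum_{\text{edges}}\cos(\pi/m_e)$ along a chain is bounded. From this:
\begin{itemize}
\item at most one edge carries a label $\ge4$, and only in a chain;
\item there is at most one branch point, of degree $3$;
\item a label $\ge4$ and a branch point do not coexist.
\end{itemize}
With one labelled edge, I rule out $\tilde C_n$, $\tilde B_n$ and $\tilde F_4$, and use the corresponding $5$-labelled obstructions. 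This forces $B_n$ or $F_4$ for label $4$, $H_3$ or $H_4$ for label $5$, and $I_2(m)$ for $m\ge6$.

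For a branched tree with arms of lengths $p\le q\le r$, the test vector with linearly decreasing weights along the arms yields the criterion $1/p+1/q+1/r>1$. This gives exactly $D_n$ and $E_{6,7,8}$; an unbranched unlabelled tree is $A_n$.

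The main obstacle is the converse: checking that every listed diagram really is positive definite. I would do this by computing leading principal minors along the chain. For example, $\det A(A_n)=(n+1)/2^n$, with similar recursions for $B_n$ and $D_n$ and direct computation for the finitely many exceptional cases. Alternatively, I would realize each diagram by an explicit finite reflection group, as (iii)$\Rightarrow$(i) of the proposition permits.
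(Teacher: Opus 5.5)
Your outline is correct and is essentially the standard proof in \cite[\S\S\,2.4--2.7]{Humphreys12} combined with \cite[Theorem 6.4]{Humphreys12}, which the paper cites without reproducing. Your test-vector criterion $1/p+1/q+1/r>1$ for the branched case is an equivalent alternative to excluding $\tilde E_{6,7,8}$, namely $T_{3,3,3}$, $T_{2,4,4}$, $T_{2,3,6}$, as subgraphs.

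One correction: the all-ones vector on a path only gives $\sum_e(2\cos(\pi/m_e)-1)<1$. That bound excludes neither two $4$-labels (for the chain $4$--$4$ it gives $q=3-2\sqrt2>0$) nor a vertex of degree $4$. So your three bullets must come from the null vectors of $\tilde C_n$, $\tilde D_n$, $\tilde B_n$, which you already list (e.g.\ $(1,\sqrt2,\dots,\sqrt2,1)$ for $\tilde C_n$). Alternatively, use the sum over a simply-laced chain as a norm-one vector that contracts the chain.
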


\section{Structure of root systems}\label{sec:root system}

Let $K$ be a totally real number field,  $\cO=\cO_K$ its ring of integers,  and $L$ a root lattice over $\cO$.
    \[
    \textbf{Throughout this section,  we assume $|\Phi(L)|<\infty$.}
    \]

\begin{lemma}\label{finite}
Suppose that $[K:\Q]<\infty$.
Then the above condition is satisfied,  \emph{i.e.,} $\Phi(L)$ is a finite set.
\end{lemma}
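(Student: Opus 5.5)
The plan is to show that $\Phi(L)$ is a bounded discrete subset of a suitable real vector space, via the standard embedding of $\cO$-lattices into Euclidean space. Let $d=[K:\Q]$ and let $\sigma_1,\dots,\sigma_d$ be the real embeddings of $K$. Consider the real vector space
\[
V_\R \coloneqq V\otimes_\Q\R \cong \bigoplus_{i=1}^d V\otimes_{K,\sigma_i}\R .
\]
On it we put the quadratic form $Q(x)\coloneqq\sum_{i=1}^d \sigma_i(x\cdot x)$, i.e.\ the trace form $\mathrm{Tr}_{K/\Q}(x\cdot y)$ extended $\R$-linearly. Total positive definiteness means that each summand $V\otimes_{K,\sigma_i}\R$ carries a positive definite form. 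Hence $Q$ is positive definite on $V_\R$, and $V_\R$ is a Euclidean space of dimension $nd$.

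The next step is discreteness. Since $L=\cO v_1\oplus\cdots\oplus\cO v_n$ and $\cO$ is a free $\Z$-module of rank $d$ whose image in $K\otimes_\Q\R\cong\R^d$ is a full lattice (the standard Minkowski embedding), $L$ is a free $\Z$-module of rank $nd$. Its image in $V_\R$ spans $V_\R$, so it is a discrete subgroup (a full $\Z$-lattice) of $V_\R$.

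Next comes boundedness. For $x\in\Phi(L)$ we have $x\cdot x=2$, so $\sigma_i(x\cdot x)=2$ for every $i$ and $Q(x)=2d$. Thus $\Phi(L)$ lies in the compact sphere $\{Q=2d\}$ of the Euclidean space $V_\R$. A discrete subgroup meets a compact set in only finitely many points, so $\Phi(L)$ is finite.

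The only step needing any care is the discreteness of $L$ in $V_\R$. This reduces to the classical fact that $\cO$ embeds as a full lattice in $\R^d$ via $(\sigma_1,\dots,\sigma_d)$, applied coordinatewise with respect to the $K$-basis $v_1,\dots,v_n$. Everything else is formal. (The argument visibly fails when $[K:\Q]=\infty$, consistent with the paper's remark.)
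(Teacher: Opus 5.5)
Your proposal is correct and follows the paper's proof: both equip $V\otimes_\Q\R$ with the inner product $\mathrm{Tr}_{K/\Q}(x\cdot y)$ and use that $L$ is discrete in this space. Both then note that $\Phi(L)$ lies on the compact sphere $\{\langle x,x\rangle = 2[K:\Q]\}$, so it is finite.
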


\begin{proof}
It is well-known that  $\cO$ is a $\Z$-lattice in $K\otimes_\Q\R \simeq \R^{[K : \Q]}$. 
Since $L$ is an $\cO$-lattice in $V$, it follows that $L$ is a discrete subset of   $V \otimes_\Q\R$. 
Consider an $\R$-bilinear form $\langle \cdot,  \cdot\rangle \colon (V \otimes_\Q\R) \times (V \otimes_\Q\R) \to \R$ characterized by 
\[
\langle x , y  \rangle = \mathrm{trace}_{K/\mathbb{Q}}(x \cdot y) 
\]
for any $x,y \in V$. Note that if $x \cdot y \in \Q$, then $\langle x, y \rangle =  [K : \Q](x\cdot y)$. 
Since the bilinear form $(x,y) \mapsto x \cdot y$ on $V$ is totally positive definite, $\langle \cdot,  \cdot\rangle$ is an inner product on $V \otimes_\Q\R$. In particular, the set $\{x \in V \otimes_\Q\R \mid \langle x, x \rangle = 2[K : \Q] \}$ is compact. 
Hence $\Phi(L) \subset L \cap \{x \in V \otimes_\Q\R \mid \langle x, x \rangle = 2[K : \Q] \}$ is finite since it is discrete and compact. 
\end{proof}

\subsection{Fundamental system of roots}\label{sec:fundamental}

We prove that $L$ has a fundamantal system,  following the lines of \cite[\S\,1.4]{Ebeling13}.
By the assumption that $|\Phi(L)| < \infty$, one can take a non-zero vector $t\in V\otimes_K\R=\R^n$ so that $t\cdot\alpha\neq0$ for all $\alpha\in\Phi(L)$.
Here,  we extend the bilinear form on $V$ to $\R^n$ by fixing a basis of $V$.
Set 
    \[
    \Phi^+_t \coloneqq \{\alpha\in\Phi \mid t\cdot\alpha>0\}
    \]
and $\Phi^-_t \coloneqq \{-\alpha \mid \alpha\in\Phi_t^+\}$ so that we have the partition $\Phi=\Phi_t^+\sqcup\Phi_t^-$.

For any $\alpha,  \beta\in\Phi_t^+$ we write $\alpha\preceq\beta$ if there are roots $\beta_1 \coloneqq \beta,  \beta_2,  \ldots,  \beta_r\in\Phi_t^+$ and scalars $c_1,  c_2,  \ldots,  c_r\in \cO_{\geq1}$ such that $\alpha=\sum_{j=1}^rc_j\beta_j$.
This defines a partial order $\preceq$ on $\Phi_t^+$.
Let $\Delta_t$ be the set of maximal elements with respect to the order $\preceq$.

\begin{lemma}\label{fund_negative}
For any distinct $\alpha,  \beta\in\Delta_t$,  we have $\alpha\cdot\beta=-2\cos(\pi/m)$ with some integer $m\geq2$.
In particular $\alpha\cdot\beta\leq0$. 
\end{lemma}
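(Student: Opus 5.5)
The plan is to reduce to the rank-$2$ subspace $P \coloneqq K\alpha+K\beta$ and use the geometry of the finite dihedral group generated by $s_\alpha$ and $s_\beta$. Since $\alpha,\beta\in\Phi_t^+$ are distinct, we have $\beta\neq\pm\alpha$. Hence $\alpha,\beta$ are linearly independent: two roots both of norm $2$ that are $K$-proportional differ by a sign, as $c^2=1$.

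\emph{Step 1: the dihedral group is finite.} Let $W'$ be the group generated by $s_\alpha,s_\beta$. It preserves $P\otimes_K\R$ and acts trivially on its orthogonal complement. It permutes the finite set $\Phi\cap P$, which spans $P$, so $W'$ is finite. Therefore $s_\alpha s_\beta$ is a rotation of finite order of the Euclidean plane $P\otimes_K\R$, and $W'$ is a finite dihedral group generated by reflections. Consequently the set $\Psi\coloneqq W'\alpha\cup W'\beta\subset\Phi\cap P$ consists of vectors of length $\sqrt2$ lying on finitely many lines. These lines are perpendicular to the reflecting lines of $W'$, and the reflecting lines are equally spaced, say with angle $\pi/m$ between adjacent ones, for some integer $m\geq2$. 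Thus the elements of $\Psi$ lie on $2m$ equally spaced rays (at most), and consecutive rays occurring in $\Psi$ differ by angle $\pi/m$.

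\emph{Step 2: extremal positive roots.} The positive elements $\Psi^+ \coloneqq \Psi\cap\Phi_t^+$ lie in the open half-plane $\{t\cdot v>0\}$, which is an open half-plane of $P\otimes_K\R$ since $t$ is nonzero on $P$. Hence $\Psi^+$ occupies $m$ consecutive rays. Let $\delta_1,\delta_2\in\Psi^+$ be the two extremal ones. They form an angle $\pi-\pi/m$, so
\[
\delta_1\cdot\delta_2=-2\cos(\pi/m)\in\cO,
\]
and therefore $2\cos(\pi/m)\in\cO$. Every $\gamma\in\Psi^+$ sits at angle $k\pi/m$ from $\delta_1$ for some $0\le k\le m-1$. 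Solving in the plane gives
\[
\gamma=\frac{\sin((m-k)\pi/m)}{\sin(\pi/m)}\,\delta_1+\frac{\sin(k\pi/m)}{\sin(\pi/m)}\,\delta_2 .
\]
By \cref{cyclotomic_unit}, both coefficients lie in $\Z[2\cos(\pi/m)]\subset\cO$. For $1\le k\le m-1$ they are $c_{m-k}$ and $c_k$, and these are $\geq1$ because $\sin(j\pi/m)\ge\sin(\pi/m)$ for $1\le j\le m-1$.

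\emph{Step 3: conclusion via maximality.} Suppose $\alpha\notin\{\delta_1,\delta_2\}$. Then Step 2 writes $\alpha=c\,\delta_1+c'\delta_2$ with $c,c'\in\cO_{\geq1}$ and $\delta_1,\delta_2\in\Phi_t^+$. This means $\alpha\preceq\delta_1$ with $\alpha\neq\delta_1$, contradicting the maximality of $\alpha$ in $\Delta_t$. The same argument applies to $\beta$. Since $\alpha\neq\beta$, we get $\{\alpha,\beta\}=\{\delta_1,\delta_2\}$, hence $\alpha\cdot\beta=-2\cos(\pi/m)\le 0$ with $m\geq2$.

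The main obstacle is Step 1/Step 2, namely justifying cleanly that the orbit $\Psi$ fills consecutive rays spaced by exactly $\pi/m$, so that the extremal angle is $\pi-\pi/m$ rather than some other rational multiple of $\pi$. I would handle this with the standard structure of finite dihedral reflection groups. The reflecting lines of $W'$ are exactly the $W'$-conjugates of $\alpha^\perp$ and $\beta^\perp$, and there are $m$ of them equally spaced. The roots $w\alpha$ and $w\beta$ are normal to these lines, so together they hit every normal direction.
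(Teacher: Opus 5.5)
Your argument is correct and has the same structure as the paper's proof. Both restrict to the plane of $\alpha,\beta$, exhibit a regular $2m$-gon of roots, write the interior positive roots as $\cO_{\geq1}$-combinations of the two extremal ones via \cref{cyclotomic_unit}, and conclude by maximality. The difference is in how the $2m$-gon is produced.

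The paper works with all of $\Phi\cap(\R\alpha+\R\beta)$. It picks two roots at minimal angle $\theta$, walks around the circle with $\alpha_{j+2}=-s_{\alpha_{j+1}}\alpha_j$, and uses minimality of $\theta$ to rule out any root strictly between consecutive vertices, giving $m=\pi/\theta$.

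You instead use the structure of the finite dihedral group $W'=\langle s_\alpha,s_\beta\rangle$: its $m$ mirrors are equally spaced, and each is a $W'$-translate of $\alpha^\perp$ or $\beta^\perp$. You then work only with the orbit $\Psi=W'\alpha\cup W'\beta$. This is legitimate, because the maximality step only needs some regular $2m$-gon of roots containing $\alpha$ and $\beta$, whose positive half is cut out by $t$. Your route avoids the ``nothing in between'' argument but relies on standard dihedral-group facts. The paper's route is self-contained and also determines every root in the plane. Your remark that $2\cos(\pi/m)=-\delta_1\cdot\delta_2\in\cO$, which is what makes \cref{cyclotomic_unit} applicable, makes explicit a point the paper leaves implicit.

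One computational slip needs fixing: your displayed decomposition is off by one in the coefficient of $\delta_1$. For $\gamma$ at angle $k\pi/m$ from $\delta_1$,
\[
\gamma=\frac{\sin((m-1-k)\pi/m)}{\sin(\pi/m)}\,\delta_1+\frac{\sin(k\pi/m)}{\sin(\pi/m)}\,\delta_2=c_{k+1}\delta_1+c_k\delta_2 .
\]
Your formula instead gives $\gamma=0$ at $k=0$ and $\gamma=\delta_1+\delta_2$ at $k=m-1$. With the correct coefficients, both are $\geq1$ exactly for $1\le k\le m-2$, not $1\le k\le m-1$. That is precisely the case $\alpha\notin\{\delta_1,\delta_2\}$ used in Step 3, so your conclusion is unaffected.
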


\begin{proof}
Set $\Phi' \coloneqq \Phi\cap (\R\alpha+\R\beta)$. 
We prove that $\Phi'$ consists of vertices of a regular $(2m)$-gon for some $m\geq 2$ in the Euclidean plane $\R\alpha+\R\beta=\R^2$.

Since $|\Phi'| < \infty$ and $\alpha \neq \pm \beta$, we may choose $\alpha_0, \alpha_1 \in \Phi'$ such that the angle $0 < \theta < \pi$  between $\alpha_0$ and $\alpha_1$ is minimal among all angles formed by distinct pairs of roots in $\Phi'$. 
We inductively define $\alpha_{j+2}$ for $j\ge 0$ by 
    \begin{equation}\label{rot}
        \alpha_{j+2} = -s_{\alpha_{j+1}}\alpha_{j}
        = -\alpha_{j}+(\alpha_{j+1}\cdot\alpha_j)\alpha_{j+1}.
    \end{equation}
Since $\Phi'$ is invariant under reflections with respect to its elements, we have $\alpha_j \in \Phi'$ for all $j \ge 0$. 
From the recurrence
\[
\alpha_{j+2} \cdot \alpha_{j+1} 
= - \alpha_j \cdot (s_{\alpha_{j+1}} \alpha_{j+1})
= \alpha_j \cdot \alpha_{j+1},
\]
it follows that $\alpha_{j+1} \cdot \alpha_j = \alpha_1 \cdot \alpha_0 = 2 \cos \theta$ for all  $j \ge 0$.
In other words,  the angle between $\alpha_j$ and $\alpha_{j+1}$ equals $\theta$ for all $j\ge 0$.
Also note that 
    \[
    \alpha_{j+2}\cdot \alpha_j = (\alpha_{j+1}\cdot\alpha_j)^2-\alpha_j\cdot\alpha_j
    = 2(2\cos^2\theta-1) = 2\cos2\theta,
    \]
which in particular implies that $\alpha_{j+2}$ is different from $\alpha_j$. 
Since $|\Phi'|<\infty$,  there exists a positive integer $k$ such that $\alpha_k=\alpha_0$.
Let $N$ be the smallest such positive  integer. 
From the above argument,  it follows that roots $\alpha_0,\alpha_1,\ldots,  \alpha_{N-1}$ are adjacent vertices of a regular $N$-gon. 

Suppose that $\gamma \in \Phi'$ is different from all of these vertices. 
Then there is a unique index $j_0$ such that both the angle between $\gamma$ and $\alpha_{j_0}$ and the angle between $\gamma$ and $\alpha_{j_0+1}$ are smaller than $\theta$.
This contradicts the minimality of $\theta$. 
Hence we obtain
\[
\Phi' = \{\alpha_0, \alpha_1, \ldots, \alpha_{N-1}\}.
\]
Since $\Phi'$ is closed under the multiplication by $(-1)$,  it follows that $N$ is even. 
Therefore, $\Phi'$ consists of vertices of a regular $(2m)$-gon with $m\coloneqq\pi/\theta\ge 2$ and $N=2m$. 

From the definition of $\Phi_t^+$, by replacing the adjacent roots $\alpha_0, \alpha_1$ if necessary, we may assume that $\Phi'\cap \Phi_t^+=\{\alpha_0,\ldots,\alpha_{m-1}\}$. 
In particular, $\alpha=\alpha_k$ for some $0\le k< m$. Now $\alpha_k$ is written as
    \[
    \alpha_{k} = \frac{\sin((1+k)\pi/m)}{\sin(\pi/m)}\alpha_0
    +\frac{\sin(k\pi/m)}{\sin(\pi/m)}\alpha_{m-1}.
    \]
Remark that the coefficients of $\alpha_0,\alpha_{m-1}$ belong to $\cO$ by \cref{cyclotomic_unit}, and to $\R_{\ge 1}$ if $0<k<m-1$. 
Hence the maximality of $\alpha\in\Phi_t^+$ implies $k=0,m-1$, and $\alpha=\alpha_0,\alpha_{m-1}$. 
Similarly, we obtain $\beta=\alpha_0,\alpha_{m-1}$. 
From $\alpha\neq \beta$, we see $\alpha\cdot\beta=\alpha_0\cdot\alpha_{m-1}=-2\cos(\pi/m)$. 
\end{proof}

\begin{lemma}\label{fund_lin_indep}
The set $\Delta_t$ is linearly independent over $K$.
\end{lemma}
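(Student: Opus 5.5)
The plan is to run the classical argument for root systems (as in \cite[\S\,1.4]{Ebeling13}), using \cref{fund_negative} for the sign condition and the positivity of $t$ on $\Phi_t^+$. Since $V\otimes_K\R$ is a real vector space containing $\Delta_t$, linear independence over $\R$ implies linear independence over $K$. It is therefore enough to prove that $\Delta_t$ is linearly independent over $\R$, regarded as vectors in $V\otimes_K\R=\R^n$ with the extended form. This form is positive definite at the fixed embedding, because the form on $V$ is totally positive definite.

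Suppose we have a nontrivial relation $\sum_{\alpha\in\Delta_t} c_\alpha\alpha=0$ with $c_\alpha\in\R$. We split the index set according to the sign of the coefficients: let $P=\{\alpha \mid c_\alpha>0\}$ and $N=\{\alpha\mid c_\alpha<0\}$. Then
\[
v\coloneqq\sum_{\alpha\in P}c_\alpha\alpha=\sum_{\beta\in N}(-c_\beta)\beta .
\]
Next we compute
\[
v\cdot v=\sum_{\alpha\in P,\ \beta\in N}c_\alpha(-c_\beta)\,\alpha\cdot\beta .
\]
Here $P\cap N=\emptyset$, so every pair $(\alpha,\beta)$ consists of distinct elements of $\Delta_t$. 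By \cref{fund_negative} each $\alpha\cdot\beta\le 0$, while the coefficients $c_\alpha(-c_\beta)$ are positive. Hence $v\cdot v\le 0$, and positive definiteness forces $v=0$.

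Now we pair with $t$. Every $\alpha\in\Delta_t\subset\Phi_t^+$ satisfies $t\cdot\alpha>0$. If $P\neq\emptyset$, then $t\cdot v=\sum_{\alpha\in P}c_\alpha(t\cdot\alpha)>0$, which contradicts $v=0$. So $P=\emptyset$, and the same argument gives $N=\emptyset$. All $c_\alpha$ vanish, which is the desired contradiction.

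I expect no serious obstacle here. The one point needing care is that the inequality $\alpha\cdot\beta\le0$ holds only for \emph{distinct} elements of $\Delta_t$, and that is exactly what the disjointness of $P$ and $N$ guarantees.
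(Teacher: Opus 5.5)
Your proof is correct and is essentially the paper's argument: split the relation by the sign of the coefficients, use \cref{fund_negative} to get $\lambda\cdot\lambda\le 0$ and hence $\lambda=0$, then pair with $t$ to force both sign classes to be empty. The only difference is that you take real coefficients in $V\otimes_K\R$. That choice needs the extended form to be positive definite, which is true (the Gram matrix is positive semidefinite because $\Q^n$ is dense in $\R^n$, and it is nonsingular). The paper instead keeps $c_\alpha\in K$, so that $\lambda\in V$ and $\lambda\cdot\lambda\le 0\Rightarrow\lambda=0$ follows directly from total positivity.
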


\begin{proof}
Suppose that  $\sum_{\alpha\in\Delta_t}c_\alpha\alpha=0$ for some $c_\alpha\in K$.
Set $S_\pm \coloneqq \{\alpha\in\Delta_t \mid \pm c_\alpha>0\}$ to obtain $\sum_{\beta\in S_+}c_\beta\beta =\sum_{\gamma\in S_-}(-c_\gamma)\gamma$.
Write the left-hand side of this equality as $\lambda$.
Then it follows from \cref{fund_negative} that 
    \[
   \lambda \cdot \lambda 
    = \sum_{\beta\in S_+}\sum_{\gamma\in S_-}c_\beta(-c_\gamma)(\beta\cdot\gamma)
    \leq 0. 
    \]
Hence we get $\lambda=0$ and
    \[
    0 = t\cdot\lambda = \sum_{\beta\in S_+}c_\beta(t\cdot\beta).
    \]
Since $c_\beta > 0$ and $t \cdot \beta > 0$ for each $\beta \in S_+$, we deduce that $S_+ = \emptyset$. Similarly, $S_- = \emptyset$, which implies that $c_\alpha = 0$ for all $\alpha \in \Delta_t$. 
\end{proof}

\begin{corollary}\label{base1}
\begin{itemize}\setlength{\itemsep}{5pt}
\item[(1)] The set $\Delta_t$ is a fundamental system of roots for $L$.
In particular,  $L$ has a fundamental system of roots.
\item[(2)] If $\Delta$ is a fundamental system of roots for $L$,  then there exists $t\in\R^n$ such that $\Delta=\Delta_t$.
\end{itemize}
\end{corollary}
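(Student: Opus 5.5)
For (1), I will check the two conditions of \cref{def_fund} for $\Delta_t$. Condition (ii) for positive roots comes first. I argue by induction on the finite poset $(\Phi_t^+,\preceq)$, from the top down, or equivalently by contradiction with a minimal counterexample. Claim: every $\alpha\in\Phi_t^+$ is a sum $\sum_{\beta\in\Delta_t}c_\beta\beta$ with $c_\beta\in\cO_{\ge1}\sqcup\{0\}$. If $\alpha$ is maximal it lies in $\Delta_t$. Otherwise there are $\beta_1,\dots,\beta_r\in\Phi_t^+$ with $\alpha=\sum c_j\beta_j$, $c_j\in\cO_{\ge1}$, and at least one $\beta_j\neq\alpha$.

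To make the induction work I first want $\preceq$ to be antisymmetric, so that each $\beta_j\ne\alpha$ is strictly above $\alpha$. Pairing with $t$ gives $t\cdot\alpha=\sum c_j\, t\cdot\beta_j\ge t\cdot\beta_1$, with equality only when $r=1$ and $c_1=1$. So $t\cdot$ is monotone on $\preceq$ and antisymmetry follows. Since $\Phi_t^+$ is finite, induction on $t\cdot\alpha$ then expresses each $\beta_j$ in $\Delta_t$ with coefficients in $\cO_{\ge1}\sqcup\{0\}$. Substituting, the coefficients of $\alpha$ are sums of products of elements of $\cO_{\ge1}$. These again lie in $\cO_{\ge1}\sqcup\{0\}$, because $\cO_{\ge1}$ is closed under addition and multiplication. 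Negative roots are handled by the symmetry $\Phi_t^-=-\Phi_t^+$.

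Condition (i) follows from two facts. By \cref{fund_lin_indep}, $\Delta_t$ is linearly independent. It also spans $V$: $\Phi$ generates $L$, hence spans $V$, and every root lies in the span of $\Delta_t$.

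For (2), let $\Delta$ be a fundamental system. Since $\Delta$ is a basis of $V$, and hence of $V\otimes_K\R$, I choose $t\in\R^n$ with $t\cdot\beta=1$ for all $\beta\in\Delta$ (the dual basis sum). Every root has coefficients all $\ge0$ or all $\le0$, not all zero, so $t\cdot\alpha\ne0$. Moreover $\Phi_t^+$ is exactly the set of roots with nonnegative coefficients.

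Next I show $\Delta\subseteq\Delta_t$. Suppose $\beta\in\Delta$ is not maximal, so that $\beta=\sum c_j\beta_j$ with $\beta_j\in\Phi_t^+$, $c_j\in\cO_{\ge1}$, and some $\beta_j\ne\beta$. Expand each $\beta_j$ in $\Delta$ with coefficients in $\cO_{\ge1}\sqcup\{0\}$ and compare coefficients in the basis $\Delta$. Comparing the values of $t\cdot$ gives $1=\sum c_j\,(t\cdot\beta_j)$. Each $t\cdot\beta_j$ is a sum of nonzero coefficients, each $\ge1$, so $t\cdot\beta_j\ge1$. This forces $r=1$, $c_1=1$, and $\beta_1$ to have total coefficient $1$, so $\beta_1\in\Delta$. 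Comparing coefficients then gives $\beta_1=\beta$, a contradiction.

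Conversely, take $\alpha\in\Delta_t$ and write $\alpha=\sum_{\beta\in\Delta}c_\beta\beta$ with $c_\beta\in\cO_{\ge1}\sqcup\{0\}$. This exhibits $\alpha\preceq\beta$ for any $\beta$ with $c_\beta\ne0$. Since $\alpha$ is maximal, $\beta\preceq\alpha$ as well, so by antisymmetry $\alpha=\beta\in\Delta$. Hence $\Delta=\Delta_t$.

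The main obstacle I expect is the antisymmetry of $\preceq$, together with the fact that coefficients coming from $\cO_{\ge1}$ cannot combine to give totals below $1$. The monotonicity of $t\cdot$ resolves both points, and it uses only that elements of $\cO_{\ge1}$ are real numbers $\ge1$ under the fixed embedding.
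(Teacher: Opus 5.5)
Your proof is correct and follows the paper's argument. In (1) you spell out the top-down induction on $t\cdot\alpha$ that the paper compresses into ``from the definition of $\Delta_t$'', including the antisymmetry of $\preceq$, which the paper takes for granted. In (2) you use the same observation that maximal elements of $\Phi_t^+=\Phi^+$ lie in $\Delta$; the only difference is that you prove $\Delta\subseteq\Delta_t$ directly with the normalized $t$ (dual-basis sum), whereas the paper gets equality from the count $|\Delta_t|=n=|\Delta|$ supplied by part (1).
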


\begin{proof}
\item[(1)] From the definition of $\Delta_t$,  each root in $\Phi_t^+$ is written in the form $\sum_{\alpha\in\Delta_t}c_\alpha\alpha$ with $c_\alpha\in \cO_{\geq1}$.
In particular, $\Delta_t$ spans $V$. 
Combined with \cref{fund_lin_indep}, this shows that $\Delta_t$ is a fundamental system of roots. 

\item[(2)] Let $\Phi^+$ be  the set of roots which are non-negative coefficient linear combination of the roots in $\Delta$.
Set $\Phi^- \coloneqq \{-\beta \mid \beta\in\Phi^+\}$.
This gives the partition $\Phi = \Phi^+ \sqcup \Phi^-$.  
Take a non-zero vector $t\in\R^n$ so that $t\cdot\alpha>0$ for all $\alpha\in\Delta$.
It follows that $\Phi^{\pm} \subset \Phi_t^{\pm}$, and hence $\Phi^{\pm} = \Phi_t^{\pm}$. 
Since maximal elements in $\Phi_t^+$ belong to $\Delta$, we obtain $\Delta_t\subset\Delta$. 
Then the equality of cardinalities $|\Delta| = n = |\Delta_t|$ implies $\Delta = \Delta_t$.
\end{proof}

\subsection{Irreducibility of root lattices}\label{sec:Irreducibility}

We say that $\Phi$ is \emph{reducible} if there exists a disjoint decomposition $\Phi=\Phi_1\sqcup\Phi_2$ with $\Phi_1\neq\emptyset$ and $\Phi_2\neq\emptyset$ such that  $\alpha_1\cdot\alpha_2=0$ for any $\alpha_1\in\Phi_1$ and $\alpha_2 \in \Phi_2$. 
Otherwise $\Phi$ is said to be \emph{irreducible}.

\begin{lemma}\label{lem:reducibility}
Suppose that $L$ is reducible as an $\cO$-lattice, i.e., there exists an orthogonal decomposition $L = L_1 \oplus L_2$.
Set $\Phi_i \coloneqq \Phi \cap L_i$ for each $i = 1,2$.
\begin{itemize}
\item[(1)] We have a disjoint decomposition $\Phi = \Phi_1 \sqcup \Phi_2$.
\item[(2)] For each $i = 1,2$, we have $\Phi(L_i) = \Phi_i$; in particular, $L_i$ is a root lattice over $\cO$.
\end{itemize}
Consequently, every root lattice over $\cO$ admits a unique decomposition as an orthogonal direct sum of irreducible root lattices.
\end{lemma}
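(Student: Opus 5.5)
The plan is to work with a root $\alpha\in\Phi$, write $\alpha=x_1+x_2$ with $x_i\in L_i$, and show that one of the $x_i$ vanishes.

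For (1), the key tool is total positivity. We have $2=\alpha\cdot\alpha=x_1\cdot x_1+x_2\cdot x_2$, where $a_i\coloneqq x_i\cdot x_i\in\cO$ because $L$ is integral. Since the form is totally positive definite, each $a_i$ is either $0$ or totally positive. If both were nonzero, then for every real embedding $\sigma$ we would have $0<\sigma(a_1)<2$, so $a_1-1$ is a nonzero algebraic integer (otherwise $a_2=1$ and $a_1=1$, see below) all of whose conjugates lie in $(-1,1)$. By \cref{Kronecker}(1), $a_1-1$ is then a root of unity; being real, it equals $\pm1$, which is impossible since its conjugates have absolute value strictly less than $1$.

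We still have to handle $a_1-1=0$, i.e.\ $a_1=a_2=1$. An integral element with all conjugates in $(-1,1)$ must be $0$ by the norm argument: its norm is a nonzero integer of absolute value less than $1$ unless the element is $0$. This same norm argument even replaces the appeal to Kronecker. So $a_1=1=a_2$ is the remaining case, and it is not excluded by positivity alone. Here I would use the generation hypothesis. Since $\Phi$ generates $L$ and $L=L_1\oplus L_2$, the projections $\Phi\to L_i$ generate $L_i$.

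The approach I would try is to consider the set $\Psi$ of roots that are split, i.e.\ those with both components nonzero, and derive a contradiction using the pairings $\alpha\cdot\beta$. These take values $2\cos(r\pi)$ by \cref{cos1}, and the components of split roots have norm $1$. For split $\alpha=x_1+x_2$, the vector $\alpha'\coloneqq x_1-x_2$ satisfies $\alpha'\cdot\alpha'=2$, but $\alpha'$ need not lie in $L$. That is the difficulty.

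The cleaner route is to note that $x_i\cdot y$ must lie in $\cO$ for $y\in L_i$, because $L_i$ is a summand and $x_i\in L_i$. If every root had both components nonzero, each of norm $1$, then the roots would generate $L_1$ through norm-$1$ projections. This case genuinely can occur: take $\cO=\Z$ and $L=\Z e_1\oplus\Z e_2$ with $\alpha=e_1+e_2$ and $\beta=e_1-e_2$, which generate an index-$2$ sublattice but not $L$. So the main obstacle is proving that split roots cannot, together with the unsplit ones, generate $L$. My approach would be to pass modulo $2$. Every $x_1$ arising from a split root has $x_1\cdot x_1=1$, and for roots in $\Phi_1$ the norm is $2\equiv0$. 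I would consider the quadratic map $q(x)=x\cdot x \bmod 2\cO$ on $L_1$, which is additive modulo $2$ because $2x\cdot y\in 2\cO$, and try to show $q$ is identically $0$ on the projection of $L$ to $L_1$. For this I need the norms of projections of split roots to be even, which would require one more input, such as comparing $x_1\cdot y_1$ for a pair of split roots and using that $\alpha\pm\beta$ has norm in $4\pm2\cO$. This parity argument is where I expect the real work to lie.

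Given (1), part (2) is immediate: $\Phi(L_i)=\Phi\cap L_i=\Phi_i$. Moreover $L=\langle\Phi_1\rangle+\langle\Phi_2\rangle$ with $\langle\Phi_i\rangle\subset L_i$, so equality holds and each $L_i$ is a root lattice. For existence of the decomposition, induct on rank. For uniqueness, decompose $\Phi$ into its orthogonal-connectivity classes. By (1), each irreducible summand's root set is a union of such classes, and irreducibility forces it to be exactly one class, since otherwise its roots would split it orthogonally. Hence the summands are determined as the spans of these classes.
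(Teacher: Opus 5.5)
\paragraph{A gap in (1): the case $x_1\cdot x_1=1$.}
Your reduction is sound up to the last case. If both components of a root $\alpha=x_1+x_2$ are nonzero, then $a_1=x_1\cdot x_1$ is totally positive with all conjugates in $(0,2)$, and the norm argument applied to $a_1-1$ forces $a_1=a_2=1$. This is a more elementary route than the paper's, which writes $\alpha_1\cdot\alpha_1=2\cos(s\pi/t)$ via \cref{Kronecker}(2) and then uses a conjugation argument to force $t\in\{2,3\}$. However, your proof of (1) stops at exactly the decisive case. You leave $x_1\cdot x_1=1$ open, call it the place ``where the real work lies,'' and propose comparing pairs of split roots. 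That is a genuine gap, and the proposed route is aimed at the wrong object.

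\paragraph{The missing observation.}
The fix is one line, and it is what the paper uses: a lattice generated by roots is even. Your map $q(x)=x\cdot x \bmod 2\cO$ is additive modulo $2\cO$ and satisfies $q(c\beta)=c^2q(\beta)$. Hence for $x=\sum_i c_i\beta_i$ with $\beta_i\in\Phi$ you get $x\cdot x=2\sum_i c_i^2+2\sum_{i<j}c_ic_j(\beta_i\cdot\beta_j)\in 2\cO$. So $q\equiv 0$ on all of $L$, not merely on some image of it.

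You do not need to control ``the projection of $L$ to $L_1$'' through split roots. That projection is $L_1$ itself, and $L_1$ is a sublattice of $L$, so $x_1\in L$. Therefore $x_1\cdot x_1\in 2\cO$, and $x_1\cdot x_1=1$ would give $\tfrac12\in\cO$, which is false.

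With evenness in hand you can even skip the case split: write $x_1\cdot x_1=2b$ with $b\in\cO$. All conjugates of $b$ lie in $[0,1)$, so the norm argument gives $b=0$, hence $x_1=0$.

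Your example $\Z e_1\oplus\Z e_2$ with roots $\pm e_1\pm e_2$ illustrates exactly this: it fails to be a root lattice because $e_1$ has odd norm. Once (1) is repaired this way, your derivations of (2) and of the existence and uniqueness of the irreducible decomposition go through as written.
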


\begin{proof}
\item[(1)] Let $\alpha \in \Phi$, and write $\alpha = \alpha_1 + \alpha_2$ with $\alpha_1 \in L_1$ and $\alpha_2 \in L_2$.
Assume that $\alpha_2 \neq 0$. Let us show that $\alpha_1 = 0$. 
For any embedding $\sigma  \colon K \hookrightarrow \R$,  we have 
    \[
    2 = \sigma(\alpha\cdot\alpha)
    = \sigma(\alpha_1\cdot\alpha_1)+\sigma(\alpha_2\cdot\alpha_2),  
    \]
which implies that $0\leq \sigma(\alpha_1\cdot\alpha_1) <  2$ since $\alpha_2 \neq 0$. 
It follows from \cref{Kronecker}(2) that we can find relatively prime integers $s$ and $t > 1$ with $\alpha_1 \cdot \alpha_1 = 2\cos(s\pi/t)$.  
If $t$ and $s$ are odd, then $2t$ is coprime to $s$ and $t-2$. 
If $t$ is odd and $s$ is even, then $t$ is coprime to $s/2$ and $(t-1)/2$. 
If $t$ is even, then $s$ is odd and $2t$ is coprime to $s$ and $t-1$. 
Hence, there is an embedding $\sigma \colon K \hookrightarrow \R$ such that 
\[
\sigma(\cos(s\pi/t)) = 
\begin{cases}
\cos((t-2)\pi/t) = -\cos(2\pi/t) & \textrm{if $t$ and $s$ are odd},  
\\
\cos((t-1)\pi/t) = -\cos(\pi/t) & \textrm{otherwise}. 
\end{cases}
\]  
Therefore, the inequality $0\le \sigma(\cos(s\pi/t)) < 1$ 
implies that $t = 2$ or $t = 3$. 
If $t = 3$, we have $\alpha_1 \cdot \alpha_1 = 1$.  
Since $L$ is generated by $\Phi$, it follows that $\alpha_1 \cdot \alpha_1 \in 2\mathcal{O}$, which is a contradiction. 
Hence, we must have $t = 2$, and consequently $\alpha_1 = 0$.  
\item[(2)] Obviously, $\Phi(L_i) \subset \Phi \cap L_i = \Phi_i$, and hence $\Phi_i = \Phi(L_i)$ by (1).  
As $L$ is generated by $\Phi$, each $L_i$ is generated by $\Phi_i$, and in particular, $L_i$ is a root lattice.
\end{proof}

\begin{corollary}\label{cor:reducibility}
A root lattice $L$ is irreducible if and only if $\Phi$ is irreducible.
\end{corollary}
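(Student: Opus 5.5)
We prove the two implications separately, using \cref{lem:reducibility} for one direction and the orthogonality of different parts of $\Phi$ for the other.

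\emph{If $L$ is reducible, then $\Phi$ is reducible.} Write $L=L_1\oplus L_2$ orthogonally, with $L_1,L_2$ nonzero $\cO$-lattices in nonzero subspaces $V_1,V_2$. By \cref{lem:reducibility}(1) we have $\Phi=\Phi_1\sqcup\Phi_2$ with $\Phi_i=\Phi\cap L_i$, and $\alpha_1\cdot\alpha_2=0$ for $\alpha_i\in\Phi_i$ because $V_1\perp V_2$. By \cref{lem:reducibility}(2), each $L_i$ is generated by $\Phi_i$. Since $L_i\neq 0$, this forces $\Phi_i\neq\emptyset$. Hence $\Phi$ is reducible.

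\emph{If $\Phi$ is reducible, then $L$ is reducible.} Suppose $\Phi=\Phi_1\sqcup\Phi_2$ with both parts nonempty and mutually orthogonal. Let $L_i$ be the $\cO$-submodule generated by $\Phi_i$, and $V_i=KL_i$ the $K$-span. Then $V_1\perp V_2$, so $V_1\cap V_2=0$ by (total) positive definiteness: a vector in the intersection is orthogonal to itself. Since $\Phi$ generates $L$, we get $L=L_1+L_2$, and this sum is direct. Also $V=V_1\oplus V_2$ with both summands nonzero, since $\Phi_i\neq\emptyset$ and roots are nonzero.

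It remains to check that each $L_i$ is an $\cO$-lattice in $V_i$ in the sense of the definition, i.e.\ free with a $K$-basis of $V_i$ as an $\cO$-basis. This is the only delicate step, since $\cO$ need not be a PID. I would handle it as follows.

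Because $L=L_1\oplus L_2$ is free, each $L_i$ is a direct summand of a free module, hence finitely generated projective. We also have $\rank L_1+\rank L_2=n$. The remaining point is that $L_i$ is free, not merely projective.

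One route: if the paper's notion of reducibility only requires $L_i$ to be $\cO$-submodules spanning $V_i$, nothing more is needed. Otherwise, use Steinitz: $L_i\cong\cO^{r_i-1}\oplus\mathfrak a_i$ with $[\mathfrak a_1][\mathfrak a_2]=1$ in the class group. One then needs each class to be trivial. This should follow because $L_i$ is generated by vectors of norm $2$, but I expect this to be the main obstacle.

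Alternatively, if the paper's definition of $\cO$-lattice effectively allows projective modules, as in the surrounding usage, the decomposition is immediate. Either way, $L$ is reducible, which proves the corollary.
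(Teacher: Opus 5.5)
Your argument is the same as the paper's. For the direction ``$\Phi$ reducible $\Rightarrow$ $L$ reducible'' the paper takes $L_i$ to be the submodule generated by $\Phi_i$ and observes that $L=L_1\oplus L_2$ orthogonally. For the converse it simply cites \cref{lem:reducibility}, exactly as you do.

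The paper never checks that $L_1,L_2$ are $\cO$-lattices, and you are right that this needs an argument. The definition demands a $K$-basis $v_1,\dots,v_n$ with $L=\sum\cO v_i$, so your fallback that the definition ``effectively allows projective modules'' is not available. You leave the point open, and as written this is the one incomplete step. It is easy to close, and the Steinitz-class route you sketch is not needed.

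The fix uses \cref{base1} under the standing hypothesis $|\Phi(L)|<\infty$ of this section.
\begin{itemize}
\item Take a fundamental system $\Delta$ of $L$ and put $\Delta_i=\Delta\cap\Phi_i$, so that $\Delta=\Delta_1\sqcup\Delta_2$.
\item For $\gamma\in\Phi_1$, write $\gamma=\sum_{\delta\in\Delta}c_\delta\delta$ with $c_\delta\in\cO$.
\item Then $\sum_{\delta\in\Delta_2}c_\delta\delta=\gamma-\sum_{\delta\in\Delta_1}c_\delta\delta$. The left side lies in $V_2$ and the right side lies in $V_1$, so both lie in $V_1\cap V_2=0$.
\item Linear independence of $\Delta$ now forces $c_\delta=0$ for all $\delta\in\Delta_2$.
\end{itemize}
Hence $L_1=\bigoplus_{\delta\in\Delta_1}\cO\delta$, and $\Delta_1$ is a $K$-basis of $V_1=KL_1$. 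The same argument applies to $L_2$. So both summands are free $\cO$-lattices in $V_1$ and $V_2$, as the definition of reducibility requires.
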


\begin{proof}
Suppose that $\Phi$ is reducible, that is, there exists a disjoint decomposition $\Phi=\Phi_1\sqcup\Phi_2$ with $\Phi_1\neq\emptyset$ and $\Phi_2\neq\emptyset$ such that $\alpha_1\cdot\alpha_2=0$ for any $\alpha_1 \in\Phi_1$ and   $\alpha_2 \in\Phi_2$. 
For each $i=1,  2$, let $L_i$ denote the sublattice of $L$ generated by $\Phi_i$. 
Since $L$ is generated by $\Phi$,  we have the orthogonal decomposition $L=L_1\oplus L_2$. Hence $L$ is reducible.
This proves the `only if' part. 
The other direction immediately follows from \cref{lem:reducibility}.
\end{proof}

Let $\widetilde{K}$ be a totally real field which is a finite extension of $K$.
The ring of integers of $\widetilde{K}$ is denoted by $\widetilde{\cO}$ and set $\widetilde{L} \coloneqq L\otimes_\cO\widetilde{\cO}$.

\begin{proposition}\label{irred_extension}
The root lattice $\widetilde{L}$ over $\widetilde{\cO}$ is irreducible if and only if $L$ is irreducible.
\end{proposition}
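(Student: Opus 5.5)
We first settle that $\widetilde{L}$ is a root lattice over $\widetilde{\cO}$ with $V\otimes_K\widetilde{K}$ as ambient space. The form stays totally positive definite, because every real embedding of $\widetilde{K}$ restricts to one of $K$ and a totally positive definite form over $K$ stays positive definite under each such extension. Integrality is clear. Finally $\Phi(L)\subset\Phi(\widetilde{L})$, and $\Phi(L)$ already generates $\widetilde{L}$ over $\widetilde{\cO}$. The plan is then to compare irreducibility of root systems via \cref{cor:reducibility}, applied over both $K$ and $\widetilde{K}$.

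\emph{If $L$ is reducible, then so is $\widetilde{L}$.} An orthogonal decomposition $L=L_1\oplus L_2$ tensors to $\widetilde{L}=(L_1\otimes\widetilde{\cO})\oplus(L_2\otimes\widetilde{\cO})$. This decomposition is still orthogonal and both summands are nonzero.

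\emph{If $\widetilde{L}$ is reducible, then so is $L$.} Let $\widetilde{L}=M_1\oplus M_2$ be a nontrivial orthogonal decomposition. By \cref{lem:reducibility}(1) applied over $\widetilde{\cO}$, every root of $\widetilde{L}$ lies in $M_1$ or in $M_2$. In particular every $\alpha\in\Phi(L)\subset\Phi(\widetilde{L})$ does, so $\Phi(L)=\Psi_1\sqcup\Psi_2$ with $\Psi_i\coloneqq\Phi(L)\cap M_i$, and $\Psi_1\perp\Psi_2$. Both pieces are nonempty: if, say, $\Psi_2=\emptyset$, then $\Phi(L)\subset M_1$, and since $\Phi(L)$ generates $\widetilde{L}$ over $\widetilde{\cO}$ we would get $M_2=0$, a contradiction. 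Hence $\Phi(L)$ is reducible, and \cref{cor:reducibility} shows that $L$ is reducible.

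The only delicate point is that \cref{lem:reducibility} must be valid over $\widetilde{K}$. Its proof used only that the lattice is a root lattice together with Kronecker's theorem and the conjugate-embedding argument, all of which hold for any totally real field, so nothing beyond checking that $\widetilde{L}$ is a root lattice is needed.
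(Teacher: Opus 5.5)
Your proof is correct and is essentially the paper's argument. The easy direction tensors the orthogonal decomposition. The other direction uses that $\Phi(L)$ already generates $\widetilde{L}$, so any orthogonal splitting over $\widetilde{\cO}$ restricts to a nontrivial orthogonal splitting of $\Phi(L)$; this is combined with \cref{lem:reducibility} over $\widetilde{\cO}$ and \cref{cor:reducibility} over $\cO$.

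The differences are cosmetic. You start from a lattice decomposition of $\widetilde{L}$ and apply \cref{lem:reducibility}(1) directly, whereas the paper starts from a splitting of $\Phi(\widetilde{L})$ and invokes \cref{cor:reducibility} over $\widetilde{\cO}$ at the end. You also spell out two checks the paper leaves implicit: that $\widetilde{L}$ is a root lattice, and that \cref{lem:reducibility} holds over $\widetilde{K}$.
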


\begin{proof}
Since $\widetilde{\cO}$ is a flat $\cO$-module,  when $L$ is reducible, $\tilde{L}$ is also reducible. 
Next, suppose that $L$ is irreducible.  
To show  that $\widetilde{L}$ is irreducible, assume that there exists a disjoint decomposition $\widetilde{\Phi} = \widetilde{\Phi}_1 \sqcup \widetilde{\Phi}_2$ such that $\alpha_1 \cdot \alpha_2 = 0$ for any $\alpha_1 \in \widetilde{\Phi}_1$ and $\alpha_2 \in \widetilde{\Phi}_2$. 
Since $\Phi$ is irreducible by \cref{cor:reducibility} and $\Phi \subset \widetilde{\Phi}$, we have $\Phi \subset \widetilde{\Phi}_i$ for some $i$.  
As $\widetilde{L}$ is generated by $\Phi$ (as an $\widetilde{\cO}$-module), it is generated by $\widetilde{\Phi}_i$. 
Thus $\widetilde{\Phi} = \widetilde{\Phi}_i$,  and it follows from \cref{cor:reducibility} that $\widetilde{L}$ is irreducible.
\end{proof}

\subsection{Comparison with Coxeter systems}\label{sec:Comparison}

Now we fix a fundamental system of roots $\Delta$.
For each $\alpha, \beta \in \Delta$, denote by $m(\alpha, \beta)$ the positive integer $m$ satisfying $\alpha \cdot \beta = -2\cos(\pi/m)$ (see \cref{fund_negative}). 
Note that $m(\alpha,  \beta)=1$ if and only if $\alpha=\beta$.

\begin{definition}\label{Coxeter-Dynkin}
The \emph{Coxeter-Dynkin diagram} of $L$ is a labelled graph associated with $\Delta$ as follows.
The vertex set is one-to-one correspondence with $\Delta$.
For distinct $\alpha,  \beta\in\Delta$,  the vertices corresponding to $\alpha$ and $\beta$ are connected by an edge if and only if $m(\alpha,  \beta)\geq3$,  which is equivalent to that $\alpha\cdot\beta\neq0$. 
If $m(\alpha,  \beta)\geq4$,  this edge is labelled with $m(\alpha,  \beta)$.
\end{definition}

Note that the Coxeter-Dynkin diagram is connected if and only if $\Phi$ is irreducible.
From \cref{cor:reducibility},  we see that $L$ is irreducible if and only if the Coxeter-Dynkin diagram is connected.  
Write $\Delta=\{\alpha_1,  \alpha_2,  \ldots,  \alpha_n\}$ and set $m_{ij} \coloneqq m(\alpha_i,  \alpha_j)$ for any integers $i,  j \in \{1,2,\ldots, n\}$.

\begin{definition}
The \emph{Schl\"afli matrix} of $L$ is a symmetric $n\times n$ matrix $A(L) \coloneqq (a_{ij})$ given by
    \[
    a_{ij} \coloneqq  -\cos(\pi/m_{ij}) = \frac12\alpha_i\cdot\alpha_j.
    \]
 Note that $2A(L)$ is the Gram matrix of $\Delta$. In particular, $A(L)$ is (totally) positive definite.
\end{definition}

\begin{proposition}\label{injection}
Suppose that $L$ is irreducible.
The Coxeter-Dynkin diagram of $L$ is one of the graphs listed in \cref{Coxeter_classification}.
\end{proposition}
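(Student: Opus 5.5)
The plan is to reduce the statement to the classification of finite Coxeter systems (\cref{Coxeter_classification}) via the criterion of \cite[Theorem 6.4]{Humphreys12} recalled above. Let $\Delta=\{\alpha_1,\ldots,\alpha_n\}$ be the fixed fundamental system and $m_{ij}=m(\alpha_i,\alpha_j)$ as in \cref{fund_negative}. By that lemma, $m_{ii}=1$, and $m_{ij}=m_{ji}\geq 2$ is a finite integer for $i\neq j$. Hence the abstract group
\[
W_\Delta \coloneqq \langle s_1,\ldots,s_n \mid (s_is_j)^{m_{ij}}=1\rangle
\]
is a Coxeter group in the sense of the definition above. By construction, its Schl\"afli matrix $A(W_\Delta,S)=(-\cos(\pi/m_{ij}))$ is exactly the Schl\"afli matrix $A(L)$. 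Moreover, its Coxeter--Dynkin diagram coincides with that of $L$ from \cref{Coxeter-Dynkin}: in both, the edges correspond to $m_{ij}\geq 3$ with label $m_{ij}$ when $m_{ij}\ge 4$.

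The key step is to show that $A(L)$ is positive definite as a real matrix. This holds because $2A(L)$ is the Gram matrix of $\Delta$. By \cref{fund_lin_indep}, $\Delta$ is linearly independent and in fact a $K$-basis of $V$. Under the fixed embedding $K\hookrightarrow\R$, the form extends to a positive definite form on $V\otimes_K\R$, by total positive definiteness. The Gram matrix of a basis under a positive definite form is positive definite. Note that only the single fixed embedding is used here: the entries $-\cos(\pi/m_{ij})$ are the images under that embedding, so no conjugation issue arises.

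By \cite[Theorem 6.4]{Humphreys12}, positive definiteness of $A(W_\Delta,S)$ gives $|W_\Delta|<\infty$, so $(W_\Delta,S)$ is a finite Coxeter system. Since $L$ is irreducible, \cref{cor:reducibility} and the remark after \cref{Coxeter-Dynkin} show that the Coxeter--Dynkin diagram of $L$ is connected. Thus $(W_\Delta,S)$ is an irreducible finite Coxeter system, and \cref{Coxeter_classification} says its diagram, which is that of $L$, is one of the listed graphs.

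The only point needing care is the identification of the diagrams, specifically making sure that $m_{ij}=2$ corresponds to no edge on both sides. This is immediate since $\alpha_i\cdot\alpha_j=-2\cos(\pi/2)=0$. Beyond that, the argument is a direct transfer, and I expect no genuine obstacle.
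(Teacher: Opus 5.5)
Your proof is correct and follows essentially the same route as the paper: positive definiteness of $A(L)$, as half the Gram matrix of the basis $\Delta$, combined with the classification in \cite[Theorem 2.7]{Humphreys12}. The only differences are that you pass through \cite[Theorem 6.4]{Humphreys12} to get a finite abstract Coxeter group before applying \cref{Coxeter_classification} (the paper applies the classification of connected positive definite graphs directly), and that you spell out how irreducibility of $L$ gives a connected diagram, which the paper leaves implicit.
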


\begin{proof}
Since $A(L)$ is positive definite,  the Coxeter-Dynkin diagram of $L$ is positive definite in the sense of \cite[\S\,2.3]{Humphreys12}.
Such diagrams are classified by \cite[Theorem 2.7]{Humphreys12}.
\end{proof}

\subsubsection{}\label{section_obs}
We summarize some immediate observations.
\begin{itemize}
\item The root system of a finite reflection group realized in the Euclidean space $ V\otimes_K\R$ is a finite set of non-zero vectors $\Psi\subset V\otimes_K\R$ satisfying the following conditions (\cite[\S\,1.2]{Humphreys12}):
\begin{itemize}
\item[(R1)] $\Psi\cap\R\alpha=\{\alpha,  -\alpha\}$ for all $\alpha\in\Psi$;
\item[(R2)] $s_\alpha\Psi=\Psi$ for all $\alpha\in\Psi$.
\end{itemize}
Obviously the set $\Phi(L)$ satisfies these conditions. 
Note that the root system of a finite reflection group is unique up to the scaling of each root. 
\item The Weyl group $W(L)$ of the root lattice $L$ coincides with the finite Coxeter group corresponding to the root system $\Phi(L)$, that is, the group generated by reflections with respect to elements in $\Phi(L)$.
\item Given a non-zero vector $t\in V\otimes_K\R$ so that $t\cdot\alpha\neq0$ for all $\alpha\in\Phi(L)$,  then there exists a unique simple system $\Delta'_t\subset\Phi(L)$ so that $\Phi^+_t$ is the positive system in the context of finite reflection groups \cite[\S\,1.3]{Humphreys12}.
Comparing the definition in \cite[\S\,1.3]{Humphreys12} with \cref{def_fund},  it follows that $\Delta_t\subset\Delta'_t$. 
However,  since both $\Delta'_t$ and $\Delta_t$ are basis of $V\otimes_K\R$,  we obtain $\Delta_t=\Delta'_t$.
This proves that our notion of fundamental system of roots coincides with simple system of finite reflection groups.
\end{itemize}
Now we can appeal to the known results on finite reflection groups to conclude the following propositions.

\begin{proposition}\label{indep}
The Weyl group $W(L)$ acts simply transitively on the set of fundamental systems of roots of $L$.
In particular,  the Coxeter-Dynkin diagram of $L$ is independent of the choice of a fundamental system of roots.
\end{proposition}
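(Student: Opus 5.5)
The plan is to transfer the statement to the classical theory of finite reflection groups via the dictionary set up in \S\ref{section_obs}, and then to observe that this transfer is compatible with the $\cO$-structure.

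First, by \cref{base1}(1) and (2), the fundamental systems of roots of $L$ are exactly the sets $\Delta_t$, where $t$ runs over vectors of $V\otimes_K\R$ with $t\cdot\alpha\neq0$ for all $\alpha\in\Phi$. By the third observation of \S\ref{section_obs}, $\Delta_t=\Delta'_t$, the simple system of the finite root system $\Phi$ (in the sense of \cite[\S\,1.3]{Humphreys12}) attached to the positive system $\Phi_t^+$. Conversely, every simple system of $\Phi$ arises from some positive system, hence from some $\Phi_t^+$. So the fundamental systems of roots of $L$ are exactly the simple systems of the reflection-group root system $\Phi$.

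Second, the first two observations of \S\ref{section_obs} say that $\Phi$ satisfies (R1) and (R2) and that $W(L)$ is the reflection group generated by the $s_\alpha$, $\alpha\in\Phi$. The standard results \cite[\S\S\,1.4, 1.8]{Humphreys12} then apply: $W$ acts transitively on positive systems, hence on simple systems (\cite[Theorem 1.4]{Humphreys12}). If $w\Delta=\Delta$, then $w\Phi^+=\Phi^+$, which forces $w=1$ (\cite[Theorem 1.8]{Humphreys12}, via the length function: $\ell(w)$ is the number of positive roots sent to negative roots). So the action is simply transitive.

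Third, $W$ maps fundamental systems to fundamental systems. This is automatic from the identification above, but it can also be checked directly: $w\in W\subset\O(V)$ preserves $\Phi$, and it preserves the $\cO$-integrality and the $\cO_{\geq1}$ conditions on coefficients, since $w\Delta_t=\Delta_{wt}$. Because $w$ is an isometry, $w\alpha\cdot w\beta=\alpha\cdot\beta$, so the numbers $m(w\alpha,w\beta)=m(\alpha,\beta)$ are unchanged. Hence the bijection $\Delta\to w\Delta$ is an isomorphism of labelled Coxeter-Dynkin diagrams, and by transitivity the diagram does not depend on the choice of $\Delta$.

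The main obstacle is the identification step, in particular checking that \emph{every} simple system in Humphreys' sense is of the form $\Delta_t$ for our definition, which requires $\cO_{\geq1}$ coefficients rather than merely nonnegative real ones. This is handled by the third observation together with \cref{base1}(2), so I would take care to invoke these precisely rather than reprove them.
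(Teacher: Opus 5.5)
Your proposal is correct and follows the paper's route: identify fundamental systems with the simple systems of the finite reflection group via the observations in \cref{section_obs} and cite \cite[Theorems 1.4 and 1.8]{Humphreys12}. Your direct checks that $W$ preserves fundamental systems (via $w\Delta_t=\Delta_{wt}$) and that the labels $m(\alpha,\beta)$ are $W$-invariant make explicit what the paper leaves implicit, and the step you flag as the main obstacle is not actually needed, since every fundamental system is already a simple system in Humphreys' sense (coefficients in $\cO_{\geq1}\sqcup\{0\}$ are nonnegative), so transitivity and freeness transfer directly once $W$-stability is known.
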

\begin{proof}
See \cite[Theorems 1.4 and 1.8]{Humphreys12} for example.
\end{proof}

\cref{injection} and \cref{indep} show that there exists a natural injection from the set of isomorphism classes of irreducible root lattices over $\cO$ to the set of irreducible finite Coxeter systems.
A root lattice is said to be \emph{of type $X_n$} if the corresponding Coxeter-Dynkin diagram is of type $X_n$.

\section{
The case of rank greater than $2$
}\label{sec:scalar}

Suppose that $\widetilde{K}/K$ is an extension of totally real fields. Remark that we do not assume $[\widetilde{K}:K]<\infty$. 
The ring of integers of $K$ and $\widetilde{K}$ are denoted by $\cO$ and $\widetilde{\cO}$,  respectively.
Let $L$ be an irreducible root lattice over $\cO$ and set $\widetilde{L} \coloneqq L\otimes_\cO\widetilde{\cO}$.
Note that $\widetilde{L}$ is an irreducible root lattice over $\widetilde{\cO}$ due to \cref{irred_extension}.
In this section,  we prove the next proposition.

\begin{proposition}\label{lem:extension1}
Suppose that $\rank(L)\geq3$ and $|\Phi(L)|<\infty$.
Then we have $\Phi(L)=\Phi(\widetilde{L})$.
In particular,  we have $|\Phi(\widetilde{L})|<\infty$ and the type of $\widetilde{L}$ is the same as that of $L$.
\end{proposition}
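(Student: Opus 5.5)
The plan is to reduce to a finite extension, identify the enlarged root system, and then rule out every proper enlargement by an integrality argument.

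\textbf{Setup.} Since $|\Phi(L)|<\infty$, the results of \cref{sec:root system} apply to $L$. So $L$ has a fundamental system $\Delta=\{\alpha_1,\ldots,\alpha_n\}$, and the Coxeter--Dynkin diagram is an irreducible finite type $X_n$ with $n\geq3$. I first check that $\Delta$ is an $\cO$-basis of $L$. Indeed, every root is an $\cO$-combination of $\Delta$ by \cref{def_fund}(ii), $L$ is generated by $\Phi(L)$, and $\Delta$ is $K$-linearly independent. Consequently $\Delta$ is also an $\widetilde{\cO}$-basis of $\widetilde{L}$.

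\textbf{Reduction to a finite extension.} Let $\gamma\in\Phi(\widetilde{L})$ and write $\gamma=\sum_i c_i\alpha_i$ with $c_i\in\widetilde{\cO}$. Put $K'=K(c_1,\ldots,c_n)\subset\widetilde{K}$; this is a totally real field of finite degree over $\Q$. Let $\cO'$ be its ring of integers and $L'=L\otimes_\cO\cO'$. Then $\gamma\in\Phi(L')$, and $\Phi(L')$ is finite by \cref{finite}. Moreover $L'$ is an irreducible root lattice by \cref{irred_extension}, of some type $Y_n$. Hence it suffices to prove $\Phi(L')=\Phi(L)$ for every finite totally real extension $K'/K$.

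\textbf{Comparing the two types.} The set $\Phi(L)\subset\Phi(L')$ is a root system of rank $n$ with all roots of norm $2$, and $W(L)\subset W(L')$ is a reflection subgroup of full rank. By \cref{section_obs}, $\Phi(L')$ is the root system of the finite reflection group of type $Y_n$, normalised so that all roots have norm $2$. So we must list the pairs $X_n\subsetneq Y_n$, with $n\geq3$, of a full-rank reflection subsystem inside an irreducible finite root system. From the classification in \cref{Coxeter_classification}, the candidates are:
\begin{itemize}
\item $A_3=D_3\subset B_3$ and $D_n\subset B_n$;
\item $A_1^{3}$ or $A_3\subset H_3$;
\item $D_4,\ A_4,\ A_2\times A_2,\ldots\subset H_4$ or $F_4$;
\item $A_n\subset B_n$ via $A_1^n$, and similar containments.
\end{itemize}
Reducible subsystems are excluded at once because $\Phi(L)$ is irreducible. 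That leaves only finitely many irreducible inclusions $X_n\subsetneq Y_n$.

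\textbf{Ruling out each inclusion.} For each remaining case I use the following integrality argument. Let $\Delta'$ be a fundamental system of $\Phi(L')$. As noted above, $\Delta$ is an $\cO'$-basis of $L'$. The same argument applies to $\Delta'$, since $\Phi(L')$ generates $L'$ and $\Delta'$ is a fundamental system. So $\Delta$ and $\Delta'$ are two $\cO'$-bases of the same lattice. Therefore
\[
\det(2A(X_n))=u\cdot\det(2A(Y_n))\quad\text{for some } u\in\cO'^{\times}.
\]
Equivalently, every root of $Y_n$, expressed in the simple roots of $X_n$, must have coefficients that are algebraic integers. I will exhibit an explicit new root with a non-integral coefficient. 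For example, in $D_n\subset B_n$ the root $\sqrt2 e_n$ equals $\tfrac1{\sqrt2}\bigl((e_{n-1}+e_n)-(e_{n-1}-e_n)\bigr)$, and $1/\sqrt2$ is not integral. The $H_3$, $H_4$ and $F_4$ cases are handled in the same way; there the obstructing coefficients involve $1/\sqrt2$ or $2/(1+\sqrt5)\cdot\frac12$-type expressions.

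\textbf{Conclusion and main obstacle.} Once every inclusion is excluded, $Y_n=X_n$ and $\Phi(L')=\Phi(L)$, so $\gamma\in\Phi(L)$. This gives $\Phi(\widetilde{L})=\Phi(L)$, and finiteness and the type follow immediately. The step I expect to be the main obstacle is making the enumeration of full-rank irreducible reflection subsystems complete and uniform, in particular inside $H_4$ and $F_4$. Where a case analysis becomes messy, I would instead compare $\det(2A)$ of the two Schläfli matrices and check that their ratio is not a unit, for instance because its norm is a power of $2$ or $5$ different from $\pm1$.
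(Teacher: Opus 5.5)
\textbf{There is a gap in the decisive step.} Your framework is viable, and the unit obstruction is correct. $\Delta$ is an $\cO'$-basis of $L'$, and so is any fundamental system $\Delta'$ of $\Phi(L')$. Hence $\det(2A(X_n))/\det(2A(Y_n))$ must lie in $\cO'^{\times}$. What you have not done is the step that actually proves the proposition: listing all pairs $X_n\subsetneq Y_n$ and excluding each one.

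\cref{Coxeter_classification} classifies finite Coxeter systems, not their full-rank reflection subsystems. The list you extract from it is both wrong and incomplete.
\begin{itemize}
\item $A_3\not\subset H_3$, since $W(H_3)\cong\mathfrak{A}_5\times\Z/2\Z$ contains no subgroup isomorphic to $\fS_4$.
\item ``$A_n\subset B_n$ via $A_1^n$'' is not an irreducible inclusion.
\item Genuine full-rank irreducible inclusions such as $A_7\subset E_7$, $A_8\subset E_8$, $D_8\subset E_8$ and $B_4\subset F_4$ are missing.
\end{itemize}

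Your determinant fallback does not repair this. Suppose you run the test over all equal-rank pairs with $|\Phi(X_n)|<|\Phi(Y_n)|$, which would spare you the subsystem classification. It then fails for exactly three pairs, where the ratio is a unit. Write $\tau=(1+\sqrt5)/2$. Then $\det 2A(B_n)=2$, $\det 2A(E_7)=2$, $\det 2A(F_4)=1$, $\det 2A(H_3)=2\tau^{-2}$ and $\det 2A(H_4)=\tau^{-4}$. So $B_3$ versus $H_3$, $B_7$ versus $E_7$, and $F_4$ versus $H_4$ give ratios $\tau^2$, $1$ and $\tau^4$, and the determinant gives no contradiction there.

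\textbf{How to close it.} Treat those three pairs separately.
\begin{itemize}
\item In each of them, $\Phi(L)$ contains two roots with inner product $-\sqrt2$.
\item Roots of type $E_7$ are $\Z$-combinations of a fundamental system with integral Gram matrix, so their inner products lie in $\Z$.
\item Roots of type $H_n$ are $\Z[\tau]$-combinations with Gram matrix over $\Z[\tau]$, and $\sqrt2\notin\Z[\tau]$.
\end{itemize}
This is exactly the device the paper uses to exclude $H_4$ when $L$ is of type $F_4$. Every other pair gives a visibly non-unit ratio, for example $(n+1)/2$, $(n+1)/4$ for $n\ge4$, $2$, $4$, $9$, $7/3$, $2\tau^2$ or $5\tau^4$.

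With this added, your route is a legitimate alternative to the paper's. For $A_n$, $B_n$ and $D_n$, the paper never identifies the type of the enlarged lattice. It writes $\beta\cdot\beta$ as a sum of squares and bounds each term by \cref{Kronecker}(3). For the exceptional types it uses only root counts and the $-\sqrt2$ remark.

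\textbf{A second, smaller problem is in your reduction.} You assert that $K(c_1,\ldots,c_n)$ has finite degree over $\Q$. This fails when $[K:\Q]=\infty$, which this section allows, and then \cref{finite} does not apply. Work instead over $K_0=\Q(\alpha_i\cdot\alpha_j,\,c_1,\ldots,c_n)$ with the lattice $L_0=\bigoplus_i\cO_{K_0}\alpha_i$, as the paper does in the proof of \cref{summary>2}.
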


Since $\Phi(L) \subset \Phi(\widetilde{L})$ is obvious, we prove the converse inclusion by a case-by-case analysis.

\subsection{The case of type $A_n$}\label{sec:typeA}

Suppose that $L$ is of type $A_n$ with $n\geq3$.
Let $\Delta \coloneqq \Delta(L)$ be a fundamental system of roots and write $\Delta=\{\alpha_1, \ldots, \alpha_n\}$ so that we have 
    \[
    \alpha_i\cdot\alpha_j = 
        \begin{cases}
         2 & \text{if $i=j$},  \\ 
        -1 & \text{if $|i-j|=1$},  \\ 
         0 & \text{if $|i-j| \geq 2$}. 
        \end{cases}
    \]
Take $\beta \in \Phi(\widetilde{L})$ and write $\beta = r_1\alpha_1 + \cdots + r_n \alpha_n$ with $r_1, \ldots, r_n \in \widetilde{\cO}$. 
The condition $\beta\cdot\beta = 2$ is equivalent to 
    \begin{align}\label{eq:A_n}
    r_1^2 + (r_1 - r_2)^2 + \cdots + (r_{n-1} - r_n)^2 + r_n^2 = 2.     
    \end{align}
For any embedding $\iota \colon \widetilde{K} \hookrightarrow \R$, we obtain
    \[
    \iota(r_1)^2 + \iota(r_1 - r_2)^2 + \cdots + \iota(r_{n-1} - r_n)^2 + \iota(r_n)^2 = 2,  
    \]
which implies
    \begin{gather*}
    |\iota(r_1)| \leq \sqrt{2}, \quad |\iota(r_n)| \leq \sqrt{2}, \quad 
    |\iota(r_i - r_{i+1})| \leq \sqrt{2}, \quad 
    \text{for $i=1, 2,  \ldots,  n-1$.} 
    \end{gather*}
Hence it follows from \cref{Kronecker} (3) that 
    \[
    r_1, r_1-r_2, \ldots, r_{n-1} - r_n, r_n \in \{0, \pm 1, \pm \sqrt{2}\}. 
    \]
If one of $r_1, r_1 - r_2, \ldots, r_{n-1} - r_n, r_n$ is equal to $\pm \sqrt{2}$,  then the equation \eqref{eq:A_n} forces all the remaining ones to be $0$. 
However,  if all but one of them are $0$,  then the remaining one must also be $0$. 
This shows that none of them can be equal to $\pm \sqrt{2}$. 
Consequently,  we have $r_1, \ldots, r_n \in \Z$, and thus $\beta \in \Phi(L)$.
This completes the proof of \cref{lem:extension1} for the case of type $A_n$, and we conclude that 
the set of roots coincides with the standard root system of type $A_n$: 
\begin{align}\label{A}
	\Phi(L) = \{ \pm(\alpha_i+\alpha_{i+1}+\cdots+\alpha_j)
    \mid 1\leq i\leq j\leq n\}.
\end{align}

\begin{corollary}\label{existence_An}
For any  totally real field $K$,  there is an irreducible root lattice over $\cO$ of type $A_n$ with $n\geq3$. 
\end{corollary}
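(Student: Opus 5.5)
The plan is to realize the usual $A_n$ root lattice over $\Z$ and extend scalars. Start with the standard $\Z$-lattice
\[
L_0 \coloneqq \{x=(x_0,\ldots,x_n)\in\Z^{n+1} \mid x_0+\cdots+x_n=0\}
\]
with the dot product restricted from $\Z^{n+1}$. Its roots include $\alpha_i\coloneqq e_{i-1}-e_i$ for $1\le i\le n$. These $\alpha_i$ form a $\Z$-basis of $L_0$, and their Gram matrix is the $A_n$ Cartan matrix: $2$ on the diagonal, $-1$ for $|i-j|=1$, and $0$ otherwise. Hence $L_0$ is an integral $\Z$-lattice generated by roots, that is, a root lattice over $\Z$. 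Since $\Q$ is totally real with $[\Q:\Q]<\infty$, \cref{finite} gives $|\Phi(L_0)|<\infty$. The diagram is connected, so by \cref{cor:reducibility} $L_0$ is irreducible.

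The simplest route is to define directly $L\coloneqq\bigoplus_{i=1}^n\cO\alpha_i$ inside $V\coloneqq K^n$, with the bilinear form given by the Gram matrix above. We then check the required properties.

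\begin{itemize}
\item The form is totally positive definite. Under every embedding $\sigma\colon K\hookrightarrow\R$ it is the same rational positive definite form, because it equals $\sum x_i^2$ restricted to the sum-zero hyperplane, or equivalently the quadratic form in \eqref{eq:A_n}.
\item $L$ is integral, because all Gram entries lie in $\Z\subset\cO$.
\item $L$ is generated by the roots $\alpha_i$.
\end{itemize}

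So $L$ is a root lattice over $\cO$, and indeed $L=L_0\otimes_\Z\cO$. Irreducibility follows from \cref{irred_extension} applied to $\Q\subset K$. That proposition is stated for finite extensions, but its proof only uses flatness and \cref{cor:reducibility}, so it works verbatim for an arbitrary totally real $K$, as the opening of \cref{sec:scalar} already notes.

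To identify the type, apply \cref{lem:extension1} with base field $\Q$ and $\widetilde K=K$. This is allowed because $\rank(L_0)=n\ge3$ and $|\Phi(L_0)|<\infty$. It gives $\Phi(L)=\Phi(L_0)$, so $L$ has the same type $A_n$ as $L_0$. Concretely, $\{\alpha_1,\ldots,\alpha_n\}$ is a fundamental system: by \eqref{A} every root is $\pm$ a sum of consecutive $\alpha_i$, so its coefficients lie in $\{0,1\}$ or $\{0,-1\}$, as \cref{def_fund} requires. The Coxeter--Dynkin diagram is then the path with unlabelled edges ($m=3$), which is $A_n$.

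The only delicate point is applying \cref{irred_extension} and \cref{lem:extension1} when $[K:\Q]=\infty$. The type-$A_n$ argument above covers this case, because it used only Kronecker's theorem (\cref{Kronecker}(3)) on each coefficient $r_i\in\cO$, and that works for any totally real $K$. Irreducibility is immediate in any case: $\Phi(L)$ is the explicit set \eqref{A}, and the diagram is connected.
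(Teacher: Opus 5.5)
Your proposal is correct and follows the paper's proof: you build the $A_n$ lattice $\bigoplus_{i=1}^n\Z\alpha_i$ over $\Z$, identify its roots as \eqref{A} via the Kronecker computation of \cref{sec:typeA}, and extend scalars to $\cO$. Your extra checks (total positivity, and the validity of \cref{irred_extension} and \cref{lem:extension1} for infinite $K$) only make explicit what the paper leaves implicit.
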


\begin{proof}
When $K = \mathbb{Q}$, 
the above discussion implies that the set of roots $\Phi(L)$ of the root lattice $L=\sum_{i=1}^n\Z\alpha_i$ equals \eqref{A}. 
Therefore $L$ is an irreducible root lattice over $\Z$ of type $A_n$.
For a general totally real field $K$, we then obtain a root lattice of type $A_n$ by extending scalars from $\mathbb{Z}$ to $\mathcal{O}_K$, that is, $L \otimes_{\mathbb{Z}} \mathcal{O}_K$. 
\end{proof}

\subsection{The case of type $B_n$}\label{sec:typeB}

Suppose that $\sqrt{2}\in K$ and $L$ is of type $B_n$ with $n\geq3$. 
Let $\Delta \coloneqq \Delta(L)$ be a fundamental system of roots and write $\Delta=\{\alpha_1, \ldots, \alpha_n\}$ so that we have 
    \[ 
        \begin{cases}
         \alpha_i\cdot\alpha_i = 2,   \\ 
        \alpha_i\cdot\alpha_{i+1} =-1 \ \text{for $i=1,  2, \ldots,  n-2$},  \\ 
        \alpha_{n-1}\cdot\alpha_n =-\sqrt{2},    \\
         \alpha_i\cdot\alpha_j =0  \ \text{if $|i-j| \geq 2$}. 
        \end{cases}
    \]
Take $\beta \in \Phi(\widetilde{L})$ and write $\beta = r_1\alpha_1 + \cdots + r_n \alpha_n$ with $r_1, \ldots, r_n \in \widetilde{\cO}$. 
The condition $\beta\cdot\beta=2$ is equivalent to
    \begin{align}\label{eq:B_n}
    r_1^2 + (r_1 - r_2)^2 + \cdots + (r_{n-2} - r_{n-1})^2 + (r_{n-1} - \sqrt{2} r_n)^2 = 2.     
    \end{align}
Arguing as in the case of type $A_n$,   we obtain from \cref{Kronecker} (3) that 
    \[
    r_1, r_1 - r_2, \ldots, r_{n-2} - r_{n-1},  r_{n-1} - \sqrt{2}r_n \in \{0, \pm 1, \pm \sqrt{2}\}.
    \]
Hence we have $r_1, \ldots, r_n \in \Q[\sqrt{2}] \cap \widetilde{\cO} = \Z[\sqrt{2}] \subset \cO$,  which implies $\beta \in \Phi(L)$.
This completes the proof of \cref{lem:extension1} for the case of type $B_n$. 
An easy calculation shows that the set of roots is as follows:
    \[
    \Phi(L)=\left\{ 
        \begin{array}{c}
        \pm(\alpha_i+\alpha_{i+1}+\cdots+\alpha_{j-1}),  \\
        \pm(\sqrt{2}\alpha_k+\sqrt{2}\alpha_{k+1}+\cdots+\sqrt{2}\alpha_{n-1}+\alpha_n),  \\
        \pm(\alpha_i+\cdots+\alpha_{j-1}+2\alpha_j+\cdots+2\alpha_{n-1}+\sqrt{2}\alpha_n)
        \end{array}
    \, \middle|\, 
    \begin{array}{c}
    1\leq i<j\leq n,\\ 
    1\leq k\leq n
    \end{array}
    \right\}.
    \]

The next corollary follows from the same argument as \cref{existence_An}.
\begin{corollary}\label{existence_Bn}
For any  totally real field $K$ containing $\sqrt{2}$,  there exists an irreducible root lattice over $\cO$ of type $B_n$ with $n\geq3$. 
\end{corollary}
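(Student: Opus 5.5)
The plan mirrors \cref{existence_An}. First I construct a root lattice of type $B_n$ over the smallest ring that contains the needed coefficients, $\cO_0 \coloneqq \Z[\sqrt{2}]$. This is the ring of integers of $K_0 \coloneqq \Q(\sqrt{2})$, which is totally real. Let $V_0$ be an $n$-dimensional $K_0$-vector space with basis $\alpha_1,\ldots,\alpha_n$, and equip it with the symmetric bilinear form given by the $B_n$ Gram matrix displayed above: $2$ on the diagonal, $-1$ for the consecutive pairs $(i,i+1)$ with $i\le n-2$, $-\sqrt{2}$ for the pair $(n-1,n)$, and $0$ otherwise.

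The first step is to check that this form is totally positive definite. The two embeddings of $K_0$ send $\sqrt{2}\mapsto\pm\sqrt{2}$, so I need both Gram matrices to be positive definite. They are conjugate by the sign change $\alpha_n\mapsto-\alpha_n$, which flips the sign of the $(n-1,n)$ entry. Hence both are positive definite, being twice the Schl\"afli matrix of the finite Coxeter system $B_n$ (\cref{Coxeter_classification} and the positive-definiteness criterion from \cite{Humphreys12}). Alternatively, the completed-square identity \eqref{eq:B_n} exhibits $\beta\cdot\beta$ as a sum of squares directly, and that sum vanishes only when all $r_i=0$.

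Next set $L_0 \coloneqq \sum_i \cO_0\alpha_i$. It is integral, since all Gram entries lie in $\cO_0$, and it is generated by the roots $\alpha_i$, so it is a root lattice. By \cref{finite}, $\Phi(L_0)$ is finite, and the argument already given for type $B_n$ (via \cref{Kronecker}(3)) computes $\Phi(L_0)$ as the displayed set. The set $\{\alpha_1,\ldots,\alpha_n\}$ is then a fundamental system: every listed root has all coefficients in $\cO_{\geq1}\sqcup\{0\}$ or all in $-(\cO_{\geq1}\sqcup\{0\})$, because $1,2,\sqrt{2}\geq1$. The diagram is connected of type $B_n$, so $L_0$ is irreducible of type $B_n$ by \cref{cor:reducibility}.

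Finally, for general $K\ni\sqrt{2}$ we have $K_0\subset K$, and I take $L \coloneqq L_0\otimes_{\cO_0}\cO$. It is irreducible by \cref{irred_extension} in the finite case, or by the argument of that proof in general. \cref{lem:extension1} in type $B_n$, which is already established, gives $\Phi(L)=\Phi(L_0)$, so the type is still $B_n$. The only point requiring care is verifying the fundamental-system condition for $L_0$ from the explicit root list. That list comes from the ``easy calculation'' stated above, and it can be derived by solving \eqref{eq:B_n} with each square term in $\{0,\pm1,\pm\sqrt{2}\}$.
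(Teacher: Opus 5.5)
Your proposal is correct and is essentially the paper's argument. The paper proves this corollary ``by the same argument as \cref{existence_An}'': build $\sum_i \Z[\sqrt{2}]\alpha_i$ with the $B_n$ Gram matrix, read off its roots from the computation in \cref{sec:typeB}, and extend scalars to $\cO$ using \cref{lem:extension1}. You supply the details the paper leaves implicit, namely total positivity via the sign change $\alpha_n\mapsto-\alpha_n$ and the fundamental-system check from the explicit root list.
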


\subsection{The case of type $D_n$}\label{sec:typeD}

Suppose that $L$ is of type $D_n$ with $n\geq4$.
Note that the root lattice of type $D_3$ is isomorphic to that of type $A_3$.

Let $\Delta=\Delta(L)$ be a fundamental system of roots and write $\Delta=\{\alpha_1, \ldots, \alpha_n\}$ so that we have for any $1 \leq i \leq j \leq n$, we have 
    \begin{align*}
    \alpha_i\cdot\alpha_j = 
        \begin{cases}
        2 & \textrm{if } i=j,   \\
        -1 & \textrm{if } j = i+1 \leq n -1,  \\
        -1 & \textrm{if } i = n-2 \text{ and } j = n,  \\
        0 & \textrm{otherwise}. 
        \end{cases}
    \end{align*}

Take $\beta \in \Phi(\widetilde{L})$ and write $\beta = r_1\alpha_1 + \cdots + r_n \alpha_n$ with $r_1, \ldots, r_n \in \widetilde{\cO}$. 
The condition $\beta\cdot\beta=2$ is equivalent to
    \begin{align}\label{eq:D_n}
    r_1^2 + (r_1 - r_2)^2 + \cdots + (r_{n-3} - r_{n-2})^2 
    + \frac{1}{2}(r_{n-2} - 2r_{n-1})^2 + \frac{1}{2}(r_{n-2} - 2r_n)^2 = 2.     
    \end{align}
By the same argument as in the case of type $A_n$,  it follows from  \cref{Kronecker} (3) that
    \[
    r_1, r_1 - r_2, \ldots, r_{n-3} - r_{n-2} \in \{0, \pm 1, \pm \sqrt{2}\}.
    \]
If one of $r_1, r_1 - r_2, \ldots, r_{n-3} - r_{n-2}$ is equal to $\pm \sqrt{2}$,  then the equation \eqref{eq:D_n} forces all the remaining ones to be $0$. 
Such a situation occurs only when 
$r_{n-2}=\pm\sqrt{2}$. 
Moreover, since we must have $r_{n-2} - 2 r_{n-1} = 0$,  it follows that $r_{n-1} = \pm \sqrt{2}/2$.
This contradicts the assumption that $r_{n-1}$ is an algebraic integer. 
Hence,  we conclude that $r_1, r_1 - r_2, \ldots, r_{n-3} - r_{n-2} \in \{0, \pm 1\}$. 
Moreover, at most two of $r_1,\, r_1 - r_2,\, \ldots,\, r_{n-3} - r_{n-2}$ are nonzero.

\underline{Case 1.} If $r_1 = r_1 - r_2 = \cdots = r_{n-3} - r_{n-2} = 0$, then we have $r_1 = \cdots = r_{n-2} = 0$, and the equation \eqref{eq:D_n} reduces to
    \[
    r_{n-1}^2 + r_n^2 = 1.
    \]
From \cref{Kronecker} we see that $r_{n-1}, r_{n} \in \{0,\pm1\}$.
Hence $r_1,  r_2,  \ldots,  r_n\in\Z$ and in particular $\beta \in \Phi(L)$. 

\underline{Case 2.} If exactly one of $r_1,\, r_1 - r_2,\, \ldots,\, r_{n-3} - r_{n-2}$ is nonzero,  then we have $r_{n-2} = \pm 1$ in any case.
The equation~\eqref{eq:D_n} becomes
    \[
    (r_{n-2} - 2 r_{n-1})^2 + (r_{n-2} - 2 r_n)^2 = 2.
    \]
By the same argument as in the case of type $A_n$,  from  \cref{Kronecker} (3) we obtain
    \[
    r_{n-2} - 2 r_{n-1},\; r_{n-2} - 2 r_n \in \{0, \pm 1, \pm \sqrt{2}\}.
    \]
Since $r_{n-2}=\pm1$ and $r_{n-1},  r_n$ are algebraic integers,  both $r_{n-2} - 2 r_{n-1}$ and $r_{n-2} - 2 r_n$ are $\pm1$.
This proves that $r_1,  r_2,  \ldots,  r_n\in\Z$ and in particular $\beta \in \Phi(L)$.

\underline{Case 3.} If exactly two of $r_1, r_1 - r_2, \ldots, r_{n-3} - r_{n-2}$ is nonzero, then  it follows from the equation \eqref{eq:D_n} that
    \[
    (r_{n-2} - 2 r_{n-1})^2 + (r_{n-2} - 2 r_n)^2  = 0.
    \]
Hence,  $r_{n-2} - 2 r_{n-1} = r_{n-2} - 2 r_{n} = 0$.
Again we obtain $r_1,  r_2,  \ldots,  r_n\in\Z$ and in particular $\beta \in \Phi(L)$. 
This completes the proof of \cref{lem:extension1} for the case of type $D_n$.

The set of roots coincides with the ordinary root system of type $D_n$:
    \[
    \Phi(L)=\left\{ 
        \begin{array}{c}
        \pm(\alpha_i+\alpha_{i+1}+\cdots+\alpha_{j-1}),  \\ 
        \pm(\alpha_k+\cdots+\alpha_{l-1}+2\alpha_l+\cdots+2\alpha_{n-2}+\alpha_{n-1}+\alpha_n) \\
        \pm(\alpha_m+\cdots+\alpha_{n-2}+\alpha_n)
        \end{array}
    \,\middle|\, 
        \begin{array}{c}
        1\leq i<j\leq n+1, \\
        1\leq k<l\leq n-2, \\
        1\leq m\leq n-2
        \end{array} 
    \right\}.
    \]

Similarly as \cref{existence_An} we obtain the next corollary.
\begin{corollary}\label{existence_Dn}
For any  totally real field $K$,  there is an irreducible root lattice over $\cO$ of type $D_n$ with $n\geq4$. 
\end{corollary}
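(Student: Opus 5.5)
The plan is to repeat the argument of \cref{existence_An}. Let $L_0=\sum_{i=1}^n\Z\alpha_i$ be the $\Z$-lattice with Gram matrix given by the $D_n$ Schl\"afli data: $\alpha_i\cdot\alpha_i=2$, with $-1$ along the edges of the $D_n$ diagram and $0$ otherwise. Then set $L\coloneqq L_0\otimes_\Z\cO_K$.

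First I will check that $L_0$ is a genuine $\Z$-lattice in the sense of \cref{sec:definition}, over $K=\Q$. The form is rational and the identity \eqref{eq:D_n} writes $\beta\cdot\beta$ for $\beta=\sum r_i\alpha_i$ as a sum of squares with positive rational weights. These squares vanish simultaneously only when $r_1=\cdots=r_n=0$, so the form is positive definite, and hence totally positive definite since $\Q$ has a single embedding. Integrality is clear because the Gram matrix has integer entries, and $L_0$ is generated by the roots $\alpha_i$, so $L_0$ is a root lattice over $\Z$.

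Next I will identify $\Delta=\{\alpha_1,\ldots,\alpha_n\}$ as a fundamental system with the $D_n$ diagram. The case analysis in \cref{sec:typeD} (Cases 1--3) used only the Gram matrix of the $\alpha_i$ and the integrality of the $r_i$. With $\widetilde{\cO}=\Z$, it shows that every root of $L_0$ has integer coordinates, and a short computation lists them as exactly the displayed set of $D_n$ roots. Every vector in that list has all coefficients $\geq0$ or all $\leq0$, so $\Delta$ satisfies \cref{def_fund}, and its Coxeter-Dynkin diagram is $D_n$. Since the diagram is connected, $L_0$ is irreducible of type $D_n$ by \cref{cor:reducibility} and the remark after \cref{Coxeter-Dynkin}.

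Finally, for general $K$ the lattice $L$ is an irreducible root lattice by \cref{irred_extension}; this passage to possibly infinite extensions is justified in \cref{sec:scalar}. By \cref{lem:extension1} we have $\Phi(L)=\Phi(L_0)$, which is finite, so $L$ again has type $D_n$. The only step needing real care is the second one, namely confirming that the enumeration of roots of $L_0$ yields precisely the positive/negative sign pattern. Even this follows directly from the three cases already treated, so I expect no genuine obstacle.
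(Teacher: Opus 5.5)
Your proposal is correct and follows the paper's route: the paper proves this by repeating the argument of \cref{existence_An}, i.e.\ building the lattice over $\Z$ from the $D_n$ Gram matrix, reading off its roots from the case analysis of \cref{sec:typeD}, and extending scalars. Your positive-definiteness check via the sum-of-squares identity and your sign-pattern check are details the paper leaves implicit.

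One small caution: the paper's displayed list of $D_n$ roots contains the non-root $\pm(\alpha_{n-1}+\alpha_n)$ (the case $i=n-1$, $j=n+1$, which has norm $4$ since $\alpha_{n-1}\cdot\alpha_n=0$). So enumerate the roots directly from Cases 1--3 rather than claiming they are ``exactly the displayed set''; this does not affect the sign-pattern conclusion.
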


\subsection{The case of type $E_n$,  $F_4$ and $H_n$}\label{sec:exceptional}

Suppose that $L$ is of type  $X_n \in \{E_6, E_7, E_8, F_4, H_3, H_4\}$ and that $\sqrt{2}\in K$ when $X_n=F_4$ and $\sqrt{5}\in K$ when $X_n=H_n$.
As observed in  \cref{section_obs}, \( \Phi(L) \) coincides with the root system of the finite reflection group of type \(X_n\). In particular, we know its cardinality as given in the following table: 
\begin{table}[htb]
\renewcommand{\arraystretch}{1.3}
\begin{tabular}{|c||c|c|c|c|c|c|c|c|c|} \hline  \rule{0mm}{5mm}
$X_n$ & \ $A_n$ & \ $B_n$ & \ $D_n$ & \ $E_6$ \ & \ $E_7$  \ & \ $E_8$ \ & \ $F_4$ \ & \ $H_3$\ & \ $H_4$ \ \\ \hline\hline
$|\Phi(L)|$ & \ $n(n+1)$ & \ $2n^2$ & \ $2n(n-1)$ & $72$  & $126$ & $240$ & $48$ & $30$ & $120$  \\  \hline
\end{tabular} 
\end{table}

Note that $\widetilde{L}$ is a root lattice over $\widetilde{K}$ of rank $n$ with $\Phi(L)\subset\Phi(\widetilde{L})$.
Taking the cardinality of root systems into consideration (see the table above),  the possible types of $\widetilde{L}$ are as follows:
    \[
        \begin{cases}
        B_6 \text{ or }  E_6 & \text{ if $X_n=E_6$},\\
        E_7 & \text{ if $X_n=E_7$},\\
        E_8 & \text{ if $X_n=E_8$},\\
        F_4 \text{ or } H_4 & \text{ if $X_n=F_4$},\\
        H_3 & \text{ if $X_n=H_3$},  \\
        H_4 & \text{ if $X_n=H_4$}.
        \end{cases}
    \]

If $L$ is of type $E_6$, then  we obtain   from the above argument that $|\Phi(L)|=72=|\Phi(\widetilde{L})|$. 
Since $\Phi(L)\subset\Phi(\widetilde{L})$,  it follows that $\Phi(L)=\Phi(\widetilde{L})$. Hence $\widetilde{L}$ is of type $E_6$. 

Suppose that $L$ is of type $F_4$.
Then there exists a pair of roots $\alpha,  \beta\in\Phi(L)\subset\Phi(\widetilde{L})$ such that $\alpha\cdot\beta=-\sqrt{2}$.
This is impossible if $\widetilde{L}$ is of type $H_4$.
Hence $\widetilde{L}$ is of type $F_4$ and $\Phi(L)=\Phi(\widetilde{L})$.

Therefore, in each case we have $\Phi(L)=\Phi(\widetilde{L})$. 
This concludes the proof of \cref{lem:extension1}. 

As for the existence of root lattices of type $E_n$ $(n=6,7,8)$, $F_4$, $H_n$ $(n=3,4)$, we have the following

\begin{corollary}\label{existence_exceptional}
Let the notation be as above.
\begin{itemize}
\item[(1)] For any  totally real field $K$,  there exists an irreducible root lattice over $\cO$ of type $E_n$ with $n=6,  7,  8$. 
\item[(2)] If $c(F_4)=\sqrt{2}\in K$,  there exists an irreducible root lattice over $\cO$ of type $F_4$. 
\item[(3)] If $c(H_3)=c(H_4)=(1+\sqrt{5})/2\in K$,  there exists an irreducible root lattice over $\cO$ of type $H_n$ with $n=3,  4$. 
\end{itemize}
\end{corollary}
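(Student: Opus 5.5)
The plan follows the pattern of \cref{existence_An}: construct each lattice over a small base ring ($\Z$, $\Z[\sqrt{2}]$ or $\Z[(1+\sqrt{5})/2]$), check directly that it is an irreducible root lattice of the required type, and then extend scalars to $\cO$. Extension of scalars is harmless by two earlier results. By \cref{irred_extension} it preserves irreducibility. By \cref{lem:extension1} (available since the rank is at least $3$ and the base field is finite over $\Q$, so \cref{finite} applies) the set of roots, and hence the type, is unchanged. The only real work is therefore the construction over the base ring $R$.

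For each $X_n\in\{E_6,E_7,E_8,F_4,H_3,H_4\}$ I will take the corresponding Schl\"afli matrix $A=(a_{ij})$ from \cref{Coxeter_classification}. I then define $L_0=\bigoplus_{i=1}^n R\alpha_i$ with the bilinear form $\alpha_i\cdot\alpha_j=2a_{ij}$. The entries $2a_{ij}=-2\cos(\pi/m_{ij})$ take the values $2,0,-1,-\sqrt{2}$ or $-(1+\sqrt5)/2$, and these lie in $R$ by the choice of $R$. Hence $L_0$ is integral and generated by the roots $\alpha_i$, so it is a root lattice as soon as the form is totally positive definite.

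Total positive definiteness is the one point needing care. For $E_n$ the matrix is rational, so positive definiteness of the Schl\"afli matrix (\cite{Humphreys12}, Theorem 6.4) suffices. For $F_4$, the nontrivial Galois conjugate replaces $-\sqrt2$ by $\sqrt2$. This is the same as changing the sign of the basis vectors on one side of the $4$-labelled edge: the diagram is a tree, so such a sign change is a congruence transformation. The conjugate form is therefore again the $F_4$ Schl\"afli form and is positive definite. For $H_n$ the conjugation sends $2\cos(\pi/5)=(1+\sqrt5)/2$ to $(1-\sqrt5)/2=-2\cos(2\pi/5)$, which turns the label $5$ into a $5/2$. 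I expect this to be the main obstacle. My approach is to compute the leading principal minors of the conjugate Gram matrix directly, since it is only $3\times3$ or $4\times4$, and check that they are positive. Equivalently, I would note that the conjugate matrix is the Gram matrix of the reflection representation of $H_n$ twisted by the Galois automorphism, which is still a faithful orthogonal representation of the finite group $W(H_n)$ and hence positive definite.

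Once $L_0$ is known to be a root lattice, I will show that $\Delta=\{\alpha_i\}$ is a fundamental system. Because $|\Phi(L_0)|<\infty$ by \cref{finite}, the discussion in \cref{section_obs} identifies $\Phi(L_0)$ with a root system of the finite reflection group generated by the $s_{\alpha_i}$. I will then apply the following standard fact: if a basis of a finite reflection-group root system has pairwise angles $\pi-\pi/m_{ij}$, then it is a simple system for the Coxeter group of that diagram. This identifies the type of $L_0$ as $X_n$. Irreducibility follows from connectedness of the diagram, using \cref{cor:reducibility}. Finally, setting $L=L_0\otimes_R\cO$ gives the lattice required in (1)--(3).
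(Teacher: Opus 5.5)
Your construction, the scalar-extension step via \cref{irred_extension} and \cref{lem:extension1}, and your check of total positive definiteness are all fine. For the positivity check, the sign-flip congruence handles $F_4$ and the leading minors handle the conjugate $H_n$ matrices; this is more careful than the paper, which simply asserts total positivity.

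The gap is in your final paragraph. The discussion in \cref{section_obs} identifies $\Phi(L_0)$ with the root system of $W(L_0)$. That group is generated by $s_\alpha$ for \emph{all} $\alpha\in\Phi(L_0)$, not just by $s_{\alpha_1},\dots,s_{\alpha_n}$. Whether the two groups coincide is exactly the question of whether $\Phi(L_0)$ has roots outside $\Phi'\coloneqq\langle s_{\alpha_1},\dots,s_{\alpha_n}\rangle\Delta$. The ``standard fact'' you invoke only shows that $\Delta$ is a simple system of the subsystem $\Phi'$, which is of type $X_n$. As a statement about the ambient root system it is false. In $F_4$ normalized to equal root lengths, as here, a simple system of the long-root subsystem $D_4$ is a basis of roots with Coxeter angles that is not simple; the same happens for $A_3=D_3\subset B_3$. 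Integrality of the Gram matrix does not rescue it either. Over $K=\Q(\zeta_{60}^+)$ the lattice $\cO[\zeta_{12}]$ has a root basis $\{1,\zeta_{12}^5\}$ with the $I_2(6)$ Gram matrix. Yet by \cref{prop:torsion_composite} it equals $\cO[\zeta_{60}]$, and its roots form $I_2(30)$.

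So you still need $\Phi(L_0)=\Phi'$; this is what the paper's proof appeals to, via the cardinality comparison of \cref{sec:exceptional}. First, $\Phi(L_0)$ is finite by \cref{finite}. It is also irreducible: any orthogonal splitting must put all of the connected $\Delta$ on one side, and $\Delta$ spans $V$. Since $\Phi(L_0)\supseteq\Phi'$, \cref{injection} shows its type has rank $n$ and at least $|\Phi'|$ roots. The table then leaves only the following candidates:
\begin{itemize}
\item $B_6$ or $E_6$ (when $X_n=E_6$);
\item $E_7$ or $E_8$ (when $X_n=E_7$ or $E_8$);
\item $F_4$ or $H_4$ (when $X_n=F_4$);
\item $H_3$ or $H_4$ (when $X_n=H_3$ or $H_4$).
\end{itemize}
In each case $|\Phi(L_0)|=|\Phi'|$, except $H_4$ in the $F_4$ case. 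That option is excluded because $-\sqrt2$ occurs as an inner product of roots (or because $(1+\sqrt5)/2\notin\Z[\sqrt2]$). Hence $\Phi(L_0)=\Phi'$, so $\Delta$ is a fundamental system and $L_0$ has type $X_n$. With this step inserted, your argument goes through.

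A minor point: your alternative representation-theoretic argument for the conjugate $H_n$ form needs absolute irreducibility of the twisted reflection representation, not just faithfulness. The leading-minor computation alone suffices.
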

\begin{proof}
Let $X_n\in \{E_6,E_7,E_8,F_4,H_3,H_4\}$. 
From the assumption $c(X_n)\in K$, we can 
define a totally positive definite symmetric bilinear form on a free $\cO$-module $L=\bigoplus_{i=1}^n\cO\alpha_i$ so that 
$\alpha_i\cdot\alpha_j=-2\cos(\pi/m(\alpha_i,\alpha_j))$ $(1\leq i,j \leq n, m(\alpha_i,\alpha_j)\in\Z_{> 0})$, $m(\alpha_i,\alpha_i)=1$ $(1\leq i\leq n)$ and the diagram obtained from $\{\alpha_i\}_{i=1}^n$ by the method in \cref{Coxeter-Dynkin} is the Coxeter-Dynkin diagram of type $X_n$. 
Then $L$ becomes an irreducible root lattice over $\cO$ with $\{\alpha_i\}_{i=1}^n\subset \Phi(L)$. 
Now the above discussion in this subsection implies $\{\alpha_i\}_{i=1}^n= \Phi(L)$. Therefore the root lattice $L$ is of type $X_n$. 
\end{proof}

\subsection{Summary for the case of rank greater than $2$}\label{sec:summary_rank>2}

\begin{theorem}\label{summary>2}
Let $K$ be a totally real field. 
Then for any irreducible root lattice $L$ over $\cO$ with $\rank(L)\geq3$, we have $|\Phi(L)|<\infty$,  and the isomorphism class of $L$ is determined by its type $X_n$. 
Moreover, if $n\geq 3$, then  for each type $X_n$, there exists an irreducible root lattice over  $\cO$ of type $X_n$ if and only if $c(X_n)\in K$,  where
     \[
    c(X_n) \coloneqq
        \begin{cases}
        2 & \text{if $X_n=A_n$ ($n\geq3$),  $D_n$ ($n\geq 4$),  or $E_n$ ($n=6,  7,  8$)},  \\
        \sqrt{2} & \text{if $X_n=B_n$ ($n\geq3$) or $F_4$},  \\
        (1+\sqrt{5})/2 & \text{if $X_n=H_n$ ($n=3,  4$).}
        \end{cases}
    \]
\end{theorem}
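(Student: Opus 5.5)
The plan is to reduce to a finite subextension, where \cref{finite} applies, and then transfer the result back to $K$ using the root computations of \cref{sec:typeA}--\cref{sec:exceptional}. Let $L$ be an irreducible root lattice over $\cO$ of rank $n\geq3$, with $K$ possibly of infinite degree.

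\textbf{Step 1: a finite subextension.} Since $L$ is finitely generated and generated by $\Phi(L)$, there is a finite set $S\subset\Phi(L)$ that generates $L$ over $\cO$. Let $K_0\subset K$ be the field generated by the finitely many values $\alpha\cdot\beta$ with $\alpha,\beta\in S$. Then $[K_0:\Q]<\infty$ and $K_0$ is totally real. Put $V_0\coloneqq\sum_{s\in S}K_0 s$ and $L_0\coloneqq\sum_{s\in S}\cO_{K_0}s$. The form restricted to $V_0$ takes values in $K_0$ and is totally positive definite.

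The module $L_0$ need not be free, but the proof of \cref{finite} only uses that $L_0$ is discrete in $V_0\otimes_\Q\R$, so $\Phi(L_0)$ is finite. Likewise, the construction of $\Delta_t$ in \cref{sec:fundamental} and \cref{fund_negative}, \cref{fund_lin_indep} never use freeness. So $L_0$ has a fundamental system $\Delta_0$. Every root of $L_0$, and in particular every $s\in S$, is a combination of $\Delta_0$ with coefficients in $\cO_{K_0}$.

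Irreducibility passes down: if $\Phi(L_0)$ split as an orthogonal disjoint union, then $S$ would split accordingly, and hence $L$ would split. This contradicts \cref{cor:reducibility} applied to $L$. Therefore $\Delta_0$ has an irreducible Coxeter--Dynkin diagram of rank $n\geq3$, which by \cref{injection} is one of $A_n,B_n,D_n,E_n,F_4,H_n$.

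\textbf{Step 2: no new roots over $K$.} Take $\beta\in\Phi(L)$ and write $\beta=\sum_{s}c_s s$ with $c_s\in\cO$. Substituting the expansions of the $s$ in $\Delta_0$ gives $\beta=\sum_i r_i\alpha_i$ with every $r_i\in\cO$ an algebraic integer.

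The case analyses of \cref{sec:typeA}--\cref{sec:typeD} only used that the $r_i$ are algebraic integers in a totally real field satisfying the quadratic identity \eqref{eq:A_n}, \eqref{eq:B_n} or \eqref{eq:D_n}. They therefore apply verbatim and give $\beta\in\Phi(L_0)$.

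For the exceptional types, I would instead use the counting argument of \cref{sec:exceptional}. Here $\Phi(L)\supset\Phi(L_0)$ and the set of roots of $L$ lying in $V_0\otimes\R$ is finite. The extra care needed is to justify that finiteness: $V=K\otimes_{K_0}V_0$, and the finitely many conjugate-wise bounds on the $r_i$ coming from Kronecker's lemma force the $r_i$ into a finite set. This is the step I expect to be the main technical obstacle. My fallback is to check directly that the Gram-matrix identity in each exceptional type bounds all conjugates of each $r_i$, so that \cref{Kronecker} leaves only finitely many possibilities.

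\textbf{Step 3: conclusions.} Step 2 gives $\Phi(L)=\Phi(L_0)$, which is finite. Since every root has $\cO$-coefficients with respect to $\Delta_0$ and $L$ is generated by roots, $L=\bigoplus_i\cO\alpha_i$. Its Gram matrix is $2A(L)$, which depends only on the type by \cref{indep}, so the isomorphism class of $L$ is determined by $X_n$.

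For existence, if $c(X_n)\in K$ then \cref{existence_An}, \cref{existence_Bn}, \cref{existence_Dn} and \cref{existence_exceptional} provide a lattice of type $X_n$. Their proofs rely on \cref{lem:extension1}, which holds for arbitrary totally real extensions.

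Conversely, if a lattice of type $X_n$ exists, the Gram entries $-2\cos(\pi/m)$ lie in $\cO$. For types $B_n$ and $F_4$ this gives $\sqrt2\in K$. For types $H_n$ it gives $2\cos(\pi/5)=(1+\sqrt5)/2\in K$. For the simply-laced types the condition $c(X_n)=2\in K$ is vacuous.
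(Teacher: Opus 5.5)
Your overall route matches the paper's. You pick finitely many generating roots, let $K_0$ be generated by their pairings, apply \cref{finite} to $L_0$, and then show that no new roots appear over $K$; the paper does this last step by citing \cref{lem:extension1} for $K/K_0$. The gap is Step 2 for the types $E_n$, $F_4$, $H_n$, which you leave open, and your fallback cannot work. \cref{Kronecker} pins down finitely many values only when the bound on the conjugates is below $2$. That is why \eqref{eq:A_n}, \eqref{eq:B_n}, \eqref{eq:D_n} were arranged as sums of squares of linear forms each bounded by $\sqrt2$. A bound extracted from an exceptional Gram matrix is necessarily at least $6$, since the highest root of $E_8$ has a coefficient $6$ in simple-root coordinates. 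When $[K:\Q]=\infty$, the field $K$ may contain infinitely many algebraic integers with all conjugates in $[-2,2]$, e.g.\ $2\cos(2\pi/m)$ for all $m$. Your remark that the roots of $L$ in $V_0\otimes\R$ form a finite set also presupposes exactly what Step 2 is meant to prove.

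The missing idea is to argue one root at a time. Take $\beta=\sum_i r_i\alpha_i\in\Phi(L)$ with $r_i\in\cO$. The field $K_1\coloneqq K_0(r_1,\ldots,r_n)\subset K$ is totally real of finite degree, and $\beta\in\Phi(L_1)$ for $L_1\coloneqq\bigoplus_i\cO_{K_1}\alpha_i=L_0\otimes_{\cO_{K_0}}\cO_{K_1}$. By \cref{finite} the set $\Phi(L_1)$ is finite, and by \cref{irred_extension} $L_1$ is irreducible. So $L_1$ has a type, and the cardinality argument of \cref{sec:exceptional} is legitimate for the finite extension $K_1/K_0$. It gives $\Phi(L_1)=\Phi(L_0)$, hence $\beta\in\Phi(L_0)$.

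Your caution was justified: the paper's counting argument in \cref{sec:exceptional} tacitly assumes $|\Phi(\widetilde L)|<\infty$, so it needs this reduction when $[\widetilde K:\Q]=\infty$. Your proposal, however, does not supply it. The same reduction is also what makes your existence step for $E_n$, $F_4$, $H_n$ valid over a field of infinite degree. There you cite \cref{lem:extension1} for arbitrary extensions, which is inconsistent with the doubt you raise in Step 2.
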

\begin{proof}
If we prove $|\Phi(L)|<\infty$, then the first assertion follows from \cref{lem:extension1} and the discussion in \cref{sec:Comparison} (remark that $|\Phi(L)|<\infty$ is assumed in \cref{sec:root system}), and the second assertion follows from \cref{existence_An}, \cref{existence_Bn},  \cref{existence_Dn}, and \cref{existence_exceptional}. 

Let us prove $|\Phi(L)|<\infty$. 
Since $L$ is finitely generated as an $\cO$-module and is generated by $\Phi(L)$, it is generated by finitely many roots $\alpha_1,\ldots,\alpha_d\in\Phi(L)$. 
Define $K_0=\Q(\alpha_i\cdot\alpha_j\mid 1\le i,j\le d)$. Then $K_0$ is a totally real number field satisfying $[K_0:\Q]<\infty$. Write $\cO_0$ for the ring of integers of $K_0$.  
Then $L_0:=\sum_{i=1}\cO_0\alpha_i$ is a root lattice over $\cO_0$. 
Moreover, $L_0$ has finitely many roots by \cref{finite} and is irreducible by \cref{irred_extension}. 
Since \cref{lem:extension1} implies $\Phi(L_0)=\Phi(L)$, we see that $\Phi(L)$ is a finite set.
\end{proof}

\section{The case of rank $2$}\label{sec:rank2}

In this section,  we study root lattices of rank $2$.
Let $L$ be a (not necessarily irreducible) root lattice over $\cO$ of rank $2$.
As we see later,  the situation is quite different from when it has a rank greater than $2$.
\textbf{For example,  $\Phi(L)$ is not necessary a finite set when $[K:\Q]=\infty$.
Moreover,  the existence of a root lattice of type $I_2(m)$ for a given integer $m$ heavily depends on the base field $K$.}

For any positive integer $n$,  let $\zeta_{n}$ denote a primitive $n$-th root of unity.
Note that $\zeta_{n} \not\in K$ for $n>2$ since $K$ is totally real. 
Let $\mu_{\infty} \subset \bar{K}^\times$ denote the group of all roots of unity in $\bar{K}$.
For any $\zeta\in\mu_\infty$,  we put $\zeta^+ \coloneqq \zeta + \zeta^{-1}$.
Note that $\mu_{\infty}\simeq\Q/\Z$ and any two isomorphic subgroups of $\mu_{\infty}$ are equal. 
For any positive integer $n$,  set $\mu_n \coloneqq \{\zeta\in\mu_\infty \mid \zeta^n=1\}\simeq\Z/n\Z$.

\subsection{Directed Graph $Q_K$}

\begin{definition}
We define a directed graph $Q_K$ associated with $K$ as follows:
\begin{itemize}
    \item The set of vertices (also denoted by $Q_K$) is given by
    \[
    Q_K \coloneqq  \{n > 1 \mid \zeta_{2n}^{+} \in K \} = \{n > 1 \mid [K(\zeta_{2n}) : K] = 2\}.
    \]
    \item For any $x, y \in Q_K$, there is a directed edge from $x$ to $y$ if $x$ divides $y$ and $y/x$ is a prime number. 
    If there is a path from $x$ to $y$, then we write $x \preceq y$.
\end{itemize}
We write $\pi_0(Q_K)$ for the set of maximal connected subgraphs (connected components) of $Q_K$.
\end{definition}

\begin{lemma}\label{lem:Q_K_properties}
\begin{itemize}
    \item[(1)] If $[K : \Q] < \infty$, then the graph $Q_K$ is finite. 
    \item[(2)] If $n \in Q_K$ and $1 \neq m \mid n$, then $m \in Q_K$. 
    \item[(3)] If $m, n \in Q_K$ and $\gcd(m,n) > 1$, then $\mathrm{lcm}(m,n) \in Q_K$. 
\end{itemize}
\end{lemma}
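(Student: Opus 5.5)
The plan is to translate membership in $Q_K$ into a statement about the field $K^+_n \coloneqq \Q(\zeta_{2n}^+)$, the maximal real subfield of $\Q(\zeta_{2n})$. By definition $n\in Q_K$ if and only if $\zeta_{2n}^+\in K$, that is, $K^+_n\subset K$. This holds because $\Q(\zeta_{2n}^+)$ is generated by $\zeta_{2n}^+$, and $\zeta_{2n}^+\in K$ implies $[K(\zeta_{2n}):K]=2$ (the element $\zeta_{2n}$ is a root of $t^2-\zeta_{2n}^+t+1$ and is not real for $n>1$). The converse holds because a degree-$2$ extension $K(\zeta_{2n})/K$ with $K$ totally real has its nontrivial automorphism acting as complex conjugation $\zeta\mapsto\zeta^{-1}$, which fixes $\zeta_{2n}^+$.

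For (1), I will bound $\varphi(2n)$. If $n\in Q_K$, then $[K^+_n:\Q]=\varphi(2n)/2$ for $n>1$, and this divides $[K:\Q]$ when $[K:\Q]<\infty$. Since $\varphi(2n)\to\infty$ as $n\to\infty$, only finitely many $n$ qualify, so the vertex set is finite and hence so is the graph.

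For (2), if $m\mid n$ then $\zeta_{2m}=\zeta_{2n}^{n/m}$. Hence $\zeta_{2m}^+=\zeta_{2n}^{n/m}+\zeta_{2n}^{-n/m}$ is a Chebyshev-type polynomial with integer coefficients in $\zeta_{2n}^+$, because $x^k+x^{-k}$ is an integer polynomial in $x+x^{-1}$. Therefore $\zeta_{2m}^+\in K$ whenever $\zeta_{2n}^+\in K$.

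Part (3) is the main step. Set $\ell=\mathrm{lcm}(m,n)$. Since $\zeta_{2m}$ and $\zeta_{2n}$ both lie in $\Q(\zeta_{2\ell})$, the compositum $\Q(\zeta_{2m})\Q(\zeta_{2n})$ is all of $\Q(\zeta_{2\ell})$, because the lcm of $2m$ and $2n$ is $2\ell$. Consider the field $M\coloneqq K(\zeta_{2m},\zeta_{2n})=K(\zeta_{2\ell})$, which contains $\zeta_{2\ell}$. I need $[M:K]=2$, equivalently $K(\zeta_{2m})=K(\zeta_{2n})$. Here the hypothesis $\gcd(m,n)>1$ enters. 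Let $d=\gcd(m,n)>1$; by (2) we have $d\in Q_K$, and $\zeta_{2d}\notin K$. Both $K(\zeta_{2m})$ and $K(\zeta_{2n})$ are quadratic over $K$ and contain $\zeta_{2d}$, which generates a quadratic extension $K(\zeta_{2d})$ of $K$. Hence $K(\zeta_{2m})=K(\zeta_{2d})=K(\zeta_{2n})$, so $M=K(\zeta_{2d})$ is quadratic over $K$, and $\ell\in Q_K$ by the characterization above.

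The only delicate point is checking that $\zeta_{2d}$ is non-real, i.e.\ $d>1$; this is exactly where the gcd assumption is needed. Without it, for example $m=2$ and $n=3$ with $K=\Q(\sqrt{2})$, the two quadratic extensions can differ and the lcm fails to lie in $Q_K$.
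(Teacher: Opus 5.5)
Your proof is correct. Part (1) matches the paper's argument, but parts (2) and (3) take a different route.

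In (2), you write $\zeta_{2m}^+$ as an integral Chebyshev-type polynomial in $\zeta_{2n}^+$. The paper instead argues by degrees: $K\subsetneq K(\zeta_{2m})\subseteq K(\zeta_{2n})$, and the outer extension is quadratic, so the middle one equals it.

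In (3), you work over $K$. Both quadratic extensions $K(\zeta_{2m})$ and $K(\zeta_{2n})$ contain $\mu_{2d}$, hence the already quadratic extension $K(\zeta_{2d})$. So they coincide, and their compositum $K(\zeta_{2\ell})$ is quadratic. The paper instead proves the $K$-independent identity $\Q(\zeta_{2m}^+,\zeta_{2n}^+)=\Q(\zeta_{2\ell}^+)$ by degree counting. That uses $\Q(\zeta_{2m})\cap\Q(\zeta_{2n})=\Q(\zeta_{2d})$, the analogous intersection of the real subfields, and linear disjointness of abelian extensions with trivial intersection.

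Your argument is shorter and avoids the cyclotomic degree bookkeeping; it is essentially the reasoning the paper itself uses later, in the lemma identifying $\pi_0(Q_K)$ with cyclotomic quadratic extensions of $K$. It also loses nothing: applying it to $K=\Q(\zeta_{2m}^+,\zeta_{2n}^+)$ recovers the paper's identity. What the paper's version makes explicit is that $\zeta_{2\ell}^+$ is generated over $\Q$ by $\zeta_{2m}^+$ and $\zeta_{2n}^+$, with $d>1$ entering through the index-$2$ comparison $[\Q(\zeta_{2m}^+):\Q(\zeta_{2d}^+)]=[\Q(\zeta_{2m}):\Q(\zeta_{2d})]$.

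One small point: your claim that the nontrivial automorphism of $K(\zeta_{2n})/K$ acts as $\zeta\mapsto\zeta^{-1}$ needs one more sentence. For any complex embedding $\iota$, the conjugate $\bar\iota$ agrees with $\iota$ on the totally real field $K$ but not on $\zeta_{2n}$, so it is $\iota$ composed with that automorphism. The paper sidesteps this by simply stating the two descriptions of $Q_K$ as equal in the definition.
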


\begin{proof}
(1) For any positive integer $n \in Q_K$, we have 
    \[
    |(\Z/2n\Z)^\times| = [\Q(\zeta_{2n}) : \Q] \leq [K(\zeta_{2n}) : \Q] = 2 [K : \Q].
    \]
Since $[K : \Q] < \infty$ by assumption, the set of positive integers $n$ satisfying $|(\Z/2n\Z)^\times|  \leq  2 [K : \Q]$ is finite. Hence the set $Q_K$ is finite. 

(2)  When $m \mid n$, we have $K(\zeta_{2m}) \subset K(\zeta_{2n})$.  
Hence, if $n \in Q_K$, then $K(\zeta_{2m}) = K$ or $K(\zeta_{2n})$, since $[K(\zeta_{2n}) : K] = 2$.  
As $K$ is a totally real field and $m > 1$, we have $\zeta_{2m} \not\in  K$, which implies that $[K(\zeta_{2m}) : K] = [K(\zeta_{2n}) : K] = 2$, and therefore $m \in Q_K$. 

(3) For notational simplicity,  set $d \coloneqq \gcd(m,n)$,  $m' \coloneqq m/d$,  and $n' \coloneqq n/d$. 
Since $m,n \in Q_K$,  we have $\Q(\zeta_{2m}^+, \zeta_{2n}^+) \subset K$.
Thus, it suffices to show that
    \[
    \Q(\zeta_{2m}^+, \zeta_{2n}^+) = \Q(\zeta_{2dm'n'}^+).
    \]
The inclusion $\Q(\zeta_{2m}^+, \zeta_{2n}^+) \subset \Q(\zeta_{2dm'n'}^+)$ is clear. 
Therefore,  it is enough to verify that
    \[
    [\Q(\zeta_{2dm'n'}^+) : \Q(\zeta_{2d}^+)] 
    = [\Q(\zeta_{2m}^+, \zeta_{2n}^+) : \Q(\zeta_{2d}^+)].
    \]
Since $d \neq 1$, we have 
    \[
    [\Q(\zeta_{2dm'n'}^+) : \Q(\zeta_{2d}^+)] 
    = \frac{[\Q(\zeta_{2dm'n'}) : \Q]/2}{[\Q(\zeta_{2d}) : \Q]/2} 
    = [\Q(\zeta_{2dm'n'}) : \Q(\zeta_{2d})].
    \]
On the other hand the equality $\Q(\zeta_{2m}^+) \cap \Q(\zeta_{2n}^+) = \Q(\zeta_{2d}^+)$ implies
    \begin{align*}
    [\Q(\zeta_{2m}^+, \zeta_{2n}^+) : \Q(\zeta_{2d}^+)] 
    &= [\Q(\zeta_{2m}^+) : \Q(\zeta_{2d}^+)] \cdot [\Q(\zeta_{2n}^+) : \Q(\zeta_{2d}^+)] \\
    &= [\Q(\zeta_{2m}) : \Q(\zeta_{2d})] \cdot [\Q(\zeta_{2n}) : \Q(\zeta_{2d})] \\
    &= [\Q(\zeta_{2m}, \zeta_{2n}) : \Q(\zeta_{2d})] 
    = [\Q(\zeta_{2dm'n'}) : \Q(\zeta_{2d})].
    \end{align*}
Hence we obtain the desired identity.
\end{proof}

\begin{definition}
For any subset $S \subset Q_K$,  we write  $\mu(S)$ for the subgroup of $\mu_\infty$ generated by the set $\{ \zeta_{2n} \mid n \in S  \}$. 
\end{definition}

The next corollary immediately follows from \cref{lem:Q_K_properties}. 

\begin{corollary}\label{cor:Q_K_properties}
\begin{itemize}
    \item[(1)] For any distinct  $C_1, C_2 \in \pi_0(Q_K)$,  we have $\mu(C_1) \cap \mu(C_2) = \{\pm 1\}$. 
    \item[(2)] The graph $Q_K$ is recovered from the set $\{\mu(C) \mid C \in \pi_0(Q_K)\}$ as follows:
    \[
    Q_K = \left\{ n > 1 \, \middle|\,  \zeta_{2n} \in \bigcup_{C \in \pi_0(Q_K)} \mu(C) \right\}. 
    \]
\end{itemize}
\end{corollary}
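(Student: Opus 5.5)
The plan is to derive both parts directly from \cref{lem:Q_K_properties}. First I will describe $\mu(C)$ for a connected component $C$. Since $\mu_\infty \simeq \Q/\Z$, a subgroup generated by $\zeta_{2n}$ for $n\in S$ is cyclic of order $2\,\mathrm{lcm}(S)$ when $S$ is finite. In general it is the union of $\mu_{2N}$ over $N$ ranging through the lcm's of finite subsets of $S$.

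The key claim is that for a connected component $C$ we have
\[
\mu(C)=\bigcup_{n\in C}\mu_{2n},
\]
that is, $C$ is closed under lcm. To see this, take an edge between $x$ and $y$. One divides the other, so $\gcd(x,y)=\min(x,y)>1$, and by \cref{lem:Q_K_properties}(3) we get $\mathrm{lcm}(x,y)\in Q_K$. Along a path $x=x_0,x_1,\dots,x_k=y$ I will induct, showing $\mathrm{lcm}(x_0,\dots,x_i)\in Q_K$. Each new $x_{i+1}$ shares a factor $>1$ with $x_i$, hence with the running lcm, so \cref{lem:Q_K_properties}(3) applies again. The lcm of $x$ and $y$ is then a divisor of an element of $Q_K$, so it lies in $Q_K$ by \cref{lem:Q_K_properties}(2). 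It is also connected to $x$, because dividing by one prime at a time stays inside $Q_K$ by (2). So the lcm lies in $C$, which gives the description of $\mu(C)$ above.

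For part (2), I will show $n\in Q_K$ if and only if $\zeta_{2n}\in\bigcup_C\mu(C)$. If $n\in Q_K$, then $n$ lies in some component $C$, and $\zeta_{2n}\in\mu(C)$. Conversely, suppose $\zeta_{2n}\in\mu(C)$ with $n>1$. Then $\zeta_{2n}\in\mu_{2m}$ for some $m\in C$, so $n\mid m$, and hence $n\in Q_K$ by \cref{lem:Q_K_properties}(2).

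For part (1), suppose $\zeta\in\mu(C_1)\cap\mu(C_2)$ has order $e>2$. Write the order as $2n$ or $n$; passing to the subgroup generated if needed, some $\zeta_{2d}$ with $d>1$ lies in both groups. This uses that $\mu(C_i)$ contains $-1$, so $\zeta_e$ with $e>2$ gives $\zeta_{2d}$ where $2d=\mathrm{lcm}(e,2)$. Then $d\mid m_1$ for some $m_1\in C_1$ and $d\mid m_2$ for some $m_2\in C_2$. Going down one prime at a time from $m_i$ to $d$ stays inside $Q_K$ by \cref{lem:Q_K_properties}(2), so $d$ is connected to both $m_1$ and $m_2$. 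This forces $C_1=C_2$, a contradiction. Hence the intersection is exactly $\{\pm1\}$, since $-1=\zeta_2$ lies in every $\mu(C)$.

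The only delicate step is the lcm-closure of a component, i.e. ensuring each $\gcd$ along the path exceeds $1$. I expect this to be routine once the induction is set up along a path.
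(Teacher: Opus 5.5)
Your proof is correct and follows the route the paper intends. The paper only asserts that the corollary follows immediately from \cref{lem:Q_K_properties}. Your path induction, showing that each connected component is closed under $\mathrm{lcm}$ so that $\mu(C)=\bigcup_{n\in C}\mu_{2n}$, supplies exactly the detail needed to derive both parts from (2) and (3) of that lemma, and it works even when $Q_K$ is infinite.
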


\begin{remark}
Suppose that $\{H_i\}_{i\in I}$ is a family of subgroups of $\mu_\infty$ indexed by a set $I$,  satisfying $H_i \cap H_j = \{\pm 1\}$ for any distinct $i,  j\in I$.
We associate to it a totally real field 
    \[
    K \coloneqq \Q(\zeta_{2n}^+ \mid \zeta_{2n} \in H_i, \ i\in I).
    \]
Then one can see that $\{H_i\}_{i\in I}$ coincides with $\{\mu(C) \mid C \in \pi_0(Q_K) \}$. 
This means that the properties (2) and (3) of \cref{lem:Q_K_properties} characterize the family of directed graphs $\{Q_K\}_K$, where $K$ runs over all totally real fields.   
\end{remark}

\begin{lemma}
The map
    \[
    \pi_0(Q_K) \to 
    \{\text{cyclotomic quadratic extensions of } K\}; 
    \quad C \mapsto K(\mu(C)),
    \]
is a bijection.
\end{lemma}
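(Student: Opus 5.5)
The plan is to check in turn that the map is well defined, surjective and injective. Two facts are used repeatedly. First, if $\zeta\in\mu_\infty$ has order greater than $2$ and $[K(\zeta):K]=2$, then $\zeta^+\in K$: since $K\subset\R$ and $\zeta\notin\R$, complex conjugation restricts to a nontrivial automorphism of $K(\zeta)$ fixing $K$, so it generates $\Gal(K(\zeta)/K)$, and it fixes $\zeta^+=\zeta+\bar\zeta$. Second, conversely, if $\zeta^+\in K$ then $\zeta$ is a root of $t^2-\zeta^+t+1\in K[t]$, so $[K(\zeta):K]\le 2$, and equality holds because $\zeta\notin K$. By a \emph{cyclotomic quadratic extension} I mean a quadratic extension of $K$ of the form $K(\zeta)$ with $\zeta\in\mu_\infty$.

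\emph{Well-definedness.} For $n\in Q_K$ the field $K(\zeta_{2n})$ is a quadratic extension. If there is an edge $x\to y$ in $Q_K$, then $x\mid y$, so $K(\zeta_{2x})\subset K(\zeta_{2y})$. Both fields are quadratic over $K$, so they are equal. Since $C$ is connected (ignoring edge directions), $K(\zeta_{2n})$ is the same field $F_C$ for every $n\in C$. Hence $K(\mu(C))$, the compositum of these fields, equals $F_C$, which is a cyclotomic quadratic extension. This argument does not use finiteness of $C$, so it also covers $[K:\Q]=\infty$.

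\emph{Surjectivity.} Let $F=K(\zeta)$ be quadratic with $\zeta$ of order $N$. Since $K$ is real, $N>2$. If $N$ is odd, replace $\zeta$ by $-\zeta$, which has order $2N$ and generates the same field. So we may assume $F=K(\zeta_{2n})$ with $n>1$. By the first fact $\zeta_{2n}^+\in K$, so $n\in Q_K$. If $C$ is the component containing $n$, the previous step gives $K(\mu(C))=K(\zeta_{2n})=F$.

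\emph{Injectivity.} This is the step needing the most care: we must produce an actual path in the graph, not merely a common multiple. Suppose $K(\mu(C_1))=K(\mu(C_2))=F$, and pick $n_1\in C_1$ and $n_2\in C_2$. Then $\zeta_{2n_1},\zeta_{2n_2}\in F$, so $F$ also contains the group they generate, which includes $\zeta_{2\ell}$ with $\ell=\mathrm{lcm}(n_1,n_2)$. Hence $K\subsetneq K(\zeta_{2\ell})\subset F$, so $[K(\zeta_{2\ell}):K]=2$, and the first fact gives $\ell\in Q_K$. Now factor $\ell/n_1$ into primes $p_1\cdots p_r$. The integers $n_1,\ n_1p_1,\ n_1p_1p_2,\ \ldots,\ \ell$ all divide $\ell$ and exceed $1$, so they lie in $Q_K$ by \cref{lem:Q_K_properties}(2). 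Consecutive terms are joined by edges, giving a directed path $n_1\preceq\ell$. Likewise $n_2\preceq\ell$. Therefore $n_1$ and $n_2$ lie in the same connected component, so $C_1=C_2$.
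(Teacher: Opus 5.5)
Your proof is correct and follows the same route as the paper: the quadratic fields $K(\zeta_{2x})=K(\zeta_{2y})$ coincide along edges, which gives well-definedness; surjectivity is read off from the definition of $Q_K$; and injectivity comes from showing $\mathrm{lcm}(n_1,n_2)\in Q_K$. You also make explicit two points the paper leaves implicit: the chain of divisors, via \cref{lem:Q_K_properties}(2), that turns $\mathrm{lcm}(n_1,n_2)\in Q_K$ into an actual path in the graph, and the consistent use of $\zeta_{2n}$ where the paper's proof writes $\zeta_n$.
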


\begin{proof}
This map is well-defined since $K(\zeta_{2m}) = K(\zeta_{2n})$ for any $m,n \in Q_K$ with $m \preceq n$. 
For any $m,n \in Q_K$, if $K(\zeta_m)=K(\zeta_n)$, then $K(\zeta_{\mathrm{lcm}(m,n)}) = K(\zeta_m) = K(\zeta_n)$.
Hence $\mathrm{lcm}(m,n) \in Q_K$ by definition. This proves the injectivity.
The surjectivity is immediate from the definition of $Q_K$.
\end{proof}

\subsection{Root lattice of rank $2$}

Let $\zeta \in \mu_\infty$ be a root of unity satisfying $[K(\zeta) : K] = 2$ and $\sigma$ the non-trivial element of $\Gal(K(\zeta)/K)$. 
We define the symmetric $K$-bilinear form $\langle \cdot,  \cdot\rangle \colon K(\zeta) \times K(\zeta)  \to K$ by 
    \[
    \langle x,   y\rangle =  x\sigma(y) + \sigma(x)y,  \qquad x,  y\in K(\zeta).
    \]
In the next proposition,  we construct a root lattice over $\cO$ of rank $2$ as an order in $K(\zeta)$.

\begin{proposition}\label{prop:root_lattce_rank2_construction}
\begin{itemize}
\item[(1)] The bilinear form $\langle \cdot, \cdot\rangle$ is totally positive definite.    
\item[(2)] The order $\cO[\zeta] \subset K(\zeta)$ is a root lattice over $\cO$ of rank $2$. 
\item[(3)] We have $\Phi(\cO[\zeta]) = \mu_\infty\cap\cO[\zeta]$. 
\end{itemize}
\end{proposition}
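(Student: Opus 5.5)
Throughout, $\sigma$ restricts to complex conjugation under every complex embedding of $K(\zeta)$, so $\langle x,x\rangle = 2\,x\sigma(x)$; the plan is to read off all three parts from this norm formula.

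For (1), I will fix a real embedding $\iota\colon K\hookrightarrow\R$ and extend it to $\tilde\iota\colon K(\zeta)\hookrightarrow\C$. Since $\zeta\sigma(\zeta)$ and $\zeta+\sigma(\zeta)$ lie in $K$, the element $\sigma(\zeta)$ is the other root of $t^2-(\zeta+\sigma(\zeta))t+\zeta\sigma(\zeta)$, so $\sigma(\zeta)=\zeta^{-1}$. Hence $\tilde\iota(\zeta)$ is a non-real root of unity, and $\tilde\iota\circ\sigma$ is complex conjugation composed with $\tilde\iota$. Therefore $\iota(\langle x,x\rangle)=2|\tilde\iota(x)|^2>0$ for $x\neq0$, which is total positive definiteness.

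For (2), I first check integrality. Since $\sigma(\zeta)=\zeta^{-1}\in\cO[\zeta]$, the order $\cO[\zeta]$ is $\sigma$-stable, so $\langle x,y\rangle$ is an algebraic integer lying in $K$, hence in $\cO$. Next, $\cO[\zeta]=\cO\oplus\cO\zeta$ is free of rank $2$, because $\zeta$ is a root of the monic polynomial $t^2-\zeta^+t+1$ with $\zeta^+\in\cO$. By the norm formula, $\langle\zeta^k,\zeta^k\rangle=2\zeta^k\zeta^{-k}=2$, so $1$ and $\zeta$ are roots. Since $1$ and $\zeta$ generate $\cO[\zeta]$, it is a root lattice.

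For (3), the inclusion $\mu_\infty\cap\cO[\zeta]\subset\Phi$ follows from the same computation as in (2). For the converse, take $x\in\cO[\zeta]$ with $\langle x,x\rangle=2$, which means $x\sigma(x)=1$. I will show that every conjugate of $x$ over $\Q$ has absolute value $1$ and then apply \cref{Kronecker}(1). Every embedding $\tau\colon K(\zeta)\hookrightarrow\C$ restricts to a real embedding of $K$, so by the argument in (1), $\tau(\sigma(x))=\overline{\tau(x)}$. This gives $|\tau(x)|^2=\tau(x\sigma(x))=1$. Here $x$ is an algebraic integer and is nonzero, so it is a root of unity.

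The main obstacle is the case $[K:\Q]=\infty$. There the conjugates of $x$ over $\Q$ still come from embeddings of the finitely generated field $\Q(x,\zeta)$, and each such embedding extends to $K(\zeta)$, so the argument above still applies. The only point needing care is that every embedding of $K(\zeta)$ restricts to a real embedding of $K$, which holds because $K$ is totally real.
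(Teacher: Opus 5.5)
Your parts (1) and (2) match the paper's (you also check integrality via the $\sigma$-stability of $\cO[\zeta]$, which the paper leaves implicit). Your part (3) takes a different route.

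The paper notes that a root $x$ is a unit with $\sigma(x)=x^{-1}$ and invokes Dirichlet's unit theorem. Since $K(\zeta)/K$ is CM, the unit ranks of $K(\zeta)$ and $K$ coincide. Hence the part of $\cO_{K(\zeta)}^\times\otimes_\Z\Q$ on which $\sigma$ acts by $-1$ vanishes, and such units are torsion.

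You instead use that $\sigma$ is complex conjugation under every embedding to get $|\tau(x)|=1$ for all $\tau$, and finish with \cref{Kronecker}(1). Your argument is more elementary and reuses a lemma the paper has already proved. It also applies verbatim when $[K:\Q]=\infty$, since Kronecker's theorem only involves the finitely many conjugates of $x$. Dirichlet's theorem concerns number fields of finite degree, so the paper's argument tacitly needs a reduction to a finite subextension containing the coordinates of $x$. In exchange, the paper's route places the statement within the standard structure theory of units in CM orders.

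One sentence needs repair: your derivation of $\sigma(\zeta)=\zeta^{-1}$ is circular. The element $\sigma(\zeta)$ is trivially the other root of $t^2-(\zeta+\sigma(\zeta))t+\zeta\sigma(\zeta)$, and nothing there forces $\zeta\sigma(\zeta)=1$. The correct reason is as follows. The element $\zeta^+$ is totally real, so $K(\zeta^+)$ is totally real and cannot contain $\zeta\neq\pm1$. Hence $[K(\zeta):K(\zeta^+)]=2=[K(\zeta):K]$, so $\zeta^+\in K$. Then $t^2-\zeta^+t+1$ is the minimal polynomial of $\zeta$ over $K$, and its other root is $\zeta^{-1}$. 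This is the fact the paper compresses into `$K(\zeta)/K$ is a CM-extension', and your use of $\zeta^+\in\cO$ in (2) relies on it as well.
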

\begin{proof}
(1) Let $\iota \colon K(\zeta) \hookrightarrow \C$ be an embedding.
Since $K(\zeta)/K$ is a CM-extension,  we have $\iota(\sigma(x)) = \overline{\iota(x)}$ for any $x\in K(\zeta)$,  where $\bar{\cdot}$ denotes the complex conjugation. 
Hence $\iota(\langle x, x\rangle) = 2|\iota(x)|^2$. 
This proves that $\langle \cdot, \cdot\rangle$ is totally positive definite.

(2) Since $[K(\zeta) : K] = 2$, the order $\cO[\zeta]$ is a free $\cO$-module of rank $2$ with basis $\{1, \zeta\}$. 
Since $\langle 1, 1\rangle = \langle \zeta, \zeta\rangle = 2$,  the order $\cO[\zeta]$ is a root lattice over $\cO$ of rank $2$.

(3) For any root $x \in \Phi(\cO[\zeta])$ we have $x\sigma(x) = 1$,  in other words
    \[
    \Phi(\cO[\zeta]) \subset (\cO[\zeta]^\times)^{\sigma = -1},
    \]
where $(\cdot)^{\sigma=-1}$ denotes the subset of $x$ satisfying $\sigma(x)=x^{-1}$.
By Dirichlet's unit theorem, we have
    \[
    (\cO_{K(\zeta)}^\times \otimes_{\Z} \Q)^{\sigma = -1}
    = (\cO^\times \otimes_{\Z} \Q)^{\sigma = -1}
    = \{0\}.
    \]
This implies that $(\cO[\zeta]^\times)^{\sigma = -1} = (\cO[\zeta]^\times)_{\mathrm{tors}}$,  where $(\cdot)_{\mathrm{tors}}$ denotes the torsion subgroup.
Hence we obtain $\Phi(\cO[\zeta])\subset\mu_\infty\cap\cO[\zeta]$.
The converse inclusion is immediate from the definition of $\langle \cdot, \cdot\rangle$.
\end{proof}

Recall that $L$ is a (not necessarily irreducible) root lattice over $\cO$ of rank $2$.
From \cref{cos1},  for any $\alpha,  \beta\in\Phi(L)$ there exists $\zeta \in \mu_{\infty}$ satisfying $\alpha\cdot\beta = \zeta^+$.

\begin{lemma}\label{lem:rank2_root_lattice_model}
The root lattice $L$ is isomorphic to $\cO[\zeta]$ for some $\zeta \in \mu_{\infty}$.
\end{lemma}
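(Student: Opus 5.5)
The plan is to choose two roots that generate $L$ and map them to $1$ and $\zeta$. The subtlety is that $L$ need not have finitely many roots when $[K:\Q]=\infty$, and a priori $L$ need not be generated by just two roots. So I will first find $\alpha,\beta\in\Phi(L)$ forming an $\cO$-basis of $L$.

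\textbf{Step 1: basis construction.} Take any $\alpha\in\Phi(L)$. Consider the roots $\beta$ linearly independent from $\alpha$ over $K$; these exist since $L$ has rank $2$ and is generated by roots. By \cref{cos1}, $\alpha\cdot\beta=\zeta^+$ for some root of unity $\zeta$ with $\zeta\neq\pm1$. Hence the Gram determinant $4-(\zeta^+)^2=-(\zeta-\zeta^{-1})^2$ is nonzero.

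To get an $\cO$-basis I will pass to the order viewpoint directly. For any such $\beta$, the map $\cO\alpha+\cO\beta\to\cO[\zeta]$, $\alpha\mapsto 1$, $\beta\mapsto\zeta$, is an isometry. This holds because $\langle 1,1\rangle=\langle\zeta,\zeta\rangle=2$ and $\langle 1,\zeta\rangle=\zeta+\zeta^{-1}=\alpha\cdot\beta$. It extends to a $K$-linear isometry $\phi\colon V\to K(\zeta)$.

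\textbf{Step 2: show every root lands in $\cO[\zeta]$.} Under $\phi$, every root $\gamma\in\Phi(L)$ maps to an element $x\in K(\zeta)$ with $x\sigma(x)=1$, and $x$ is integral. Integrality holds because $\langle x,1\rangle=x+\sigma(x)$ and $\langle x,\zeta\rangle$ lie in $\cO$, so $x$ is a root of $t^2-(x+\sigma(x))t+1$. Then $x$ is a unit of $\cO_{K(\zeta)}$ fixed up to inversion by $\sigma$, so by the Dirichlet argument in \cref{prop:root_lattce_rank2_construction}(3), $x$ is a root of unity $\xi\in\mu_\infty\cap K(\zeta)$.

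Thus $\phi(L)=\cO[\xi_1,\xi_2,\dots]$ is the $\cO$-module generated by finitely many roots of unity in $K(\zeta)$; finitely many suffice since $L$ is finitely generated. The roots of unity lying in the CM extension $K(\zeta)$ and generating a finitely generated $\cO$-module form a cyclic group. Call the group generated by the $\xi_i$ by $\mu_N$.

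\textbf{Step 3: identify $\phi(L)$ as an order.} The $\cO$-span of a multiplicative group $\mu_N$ is a ring, namely $\cO[\zeta_N]$, but I need $\phi(L)$ to equal that ring and not merely the span of the chosen generators. The fix is to use the Weyl group. Reflections $s_\gamma$ preserve $\Phi(L)$, and in the model $K(\zeta)$ the map $x\mapsto -\xi\sigma(x)$ is the reflection in $\xi$ (check: $s_\xi(x)=x-(x\sigma(\xi)+\sigma(x)\xi)\xi=-\xi^2\sigma(x)$; after adjusting sign conventions it is a reflection). Composing two reflections gives multiplication by $\xi_i\xi_j^{-1}$, so $\phi(L)$ is stable under multiplication by the group generated by the ratios of roots.

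Since $1\in\phi(\Phi(L))$, this group contains all the $\xi_i$. Hence $\phi(L)$ is a ring containing $\cO$ and is generated as an $\cO$-module by $\mu_N$, so $\phi(L)=\cO[\zeta_N]$. Here $\zeta_N$ generates $K(\zeta)$ over $K$ unless $N\le2$, which is impossible in rank $2$. This gives $L\simeq\cO[\zeta_N]$, with $\zeta_N$ the required root of unity.

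\textbf{Main obstacle.} I expect the main obstacle to be Step 3's claim that the $\xi_i$ generate a finite cyclic group, and the verification of the reflection formula. For finiteness I will use that $\mu_\infty\cap K(\zeta)$ need not be finite when $[K:\Q]=\infty$, but a finitely generated subgroup of $\mu_\infty$ is always finite cyclic. That suffices here.
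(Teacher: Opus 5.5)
\textbf{There is a genuine gap in Step 3.} You correctly compute $s_\xi(x)=-\xi^2\sigma(x)$, but then replace this by $x\mapsto-\xi\sigma(x)$ ``after adjusting sign conventions''. That is not a sign issue. The map $x\mapsto-\xi\sigma(x)$ is the reflection in a square root of $\xi$, which in general is not a root of $L$, so this map is not in $W(L)$.

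With the correct formula, $s_{\xi_i}s_{\xi_j}$ is multiplication by $(\xi_i\xi_j^{-1})^2$: a product of two reflections rotates by twice the angle between them. So the Weyl group only shows that $\phi(L)$ is stable under multiplication by $G^2$, where $G$ is the group generated by the $\xi_i$. Your conclusion that ``this group contains all the $\xi_i$'' fails; you only get the $\xi_i^2$. Two examples:
\begin{itemize}
\item For $\cO[i]$ (type $A_1\times A_1$), the rotations in $W$ are only $\pm1$.
\item For $\Z[\sqrt2][\zeta_8]$ (type $B_2$), the $W$-orbit of $1$ is $\{\pm1,\pm i\}$, which does not contain $\zeta_8$.
\end{itemize}
So as written you have not shown that $\phi(L)$ is closed under multiplication. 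After Steps 1--2, that closure is the entire content of the lemma. You flagged the reflection formula as a risk, and this is exactly where the argument breaks. The finiteness of $G$ is not an obstacle at all.

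\textbf{How to repair it within your framework.} Let $R$ be the $\cO$-span of $G^2$, which is a ring. Since $s_{\xi_i}s_1$ is multiplication by $\xi_i^2$ and $1\in\phi(L)$, we have $R\subseteq\phi(L)$, and $\phi(L)$ is an $R$-module. Because $G$ is cyclic, $[G:G^2]\le 2$. If $G\neq G^2$, choose a generator $\xi=\xi_{i_0}\notin G^2$. Then every $\xi_i$ lies in $G^2\cup\xi G^2$, so $\phi(L)=R+R\xi$. This is a ring because $\xi^2\in R$. Hence $\phi(L)=\cO[G]=\cO[\zeta_N]$.

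\textbf{Comparison with the paper.} With this repair your argument is a genuinely different route. The paper's proof simply asserts that, since $L$ is generated by $\Phi(L)$, one can take an $\cO$-basis $\{\alpha,\beta\}\subset\Phi(L)$, and sends $\alpha\mapsto1$, $\beta\mapsto\zeta$. Your worry that $L$ need not be generated by two roots is settled there implicitly by the fundamental system of \cref{base1}: every root is an $\cO$-combination of the two simple roots. That requires $|\Phi(L)|<\infty$, so for $[K:\Q]=\infty$ one must first descend to a finite subfield. Your route bypasses fundamental systems and finiteness of $\Phi(L)$ entirely, at the cost of the coset argument above.
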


\begin{proof}
Since $L$ is generated by $\Phi(L)$, one can take an $\cO$-basis $\{\alpha, \beta\} \subset \Phi(L)$ of $L$. 
Suppose that $\zeta \in \mu_{\infty}$ satisfies $\alpha\cdot\beta = \zeta^+$.
We define the $\cO$-homomorphism $\varphi \colon L \to \cO[\zeta]$ by $\varphi(\alpha)=1$ and $\varphi(\beta)= \zeta$.
This is an isomorphism of root lattices over $\cO$ by definition. 
\end{proof}

\begin{definition}
Let $\mu(L)$ denote the subgroup of $\mu_\infty$ generated by the set
    \[
    \{\zeta \in \mu_{\infty} \mid 
    \zeta^+ = \alpha\cdot\beta ,  \quad \alpha,  \beta\in\Phi(L)
    \}.  
    \]
\end{definition}

Note that for any $\zeta\in\mu_\infty$ one has
    \[
    \mu(\cO[\zeta])=\mu_\infty\cap\cO[\zeta]=\Phi(\cO[\zeta]),
    \]
where the latter equality is \cref{prop:root_lattce_rank2_construction} (3). 
The next lemma shows that the group $\mu(L)$ is a complete invariant of the isomorphism classes of root lattices of rank $2$.

\begin{lemma}\label{lem:mu(L)_is_invarinant_under_isom}
Let $L_1$ and $L_2$ be root lattices of rank $2$ over $\cO$. 
Then $L_1 \simeq L_2$ as root lattices if and only if $\mu(L_1) \simeq \mu(L_2)$ as groups,  or equivalently $\mu(L_1) = \mu(L_2)$.
\end{lemma}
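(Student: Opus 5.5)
The plan is to reduce to the models $\cO[\zeta]$ and show that the order $\cO[\zeta]$ is determined by its group of roots of unity.

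By \cref{lem:rank2_root_lattice_model} we may assume $L_i=\cO[\zeta_i]$ for some $\zeta_i\in\mu_\infty$ with $[K(\zeta_i):K]=2$. We first record that $\mu(L)$ is an isomorphism invariant. An isometry $L_1\to L_2$ maps $\Phi(L_1)$ bijectively onto $\Phi(L_2)$ and preserves the values $\alpha\cdot\beta$. Since $\mu(L)$ is generated by all $\zeta$ with $\zeta^+$ in the set $\{\alpha\cdot\beta\mid\alpha,\beta\in\Phi(L)\}$, this set determines $\mu(L)$. Hence $L_1\simeq L_2$ implies $\mu(L_1)=\mu(L_2)$. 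The equivalence between "$\simeq$" and "$=$" for subgroups of $\mu_\infty$ was noted at the start of the section: finite subgroups of $\mu_\infty\simeq\Q/\Z$ are determined by their order.

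For the converse, use the identity $\mu(\cO[\zeta])=\mu_\infty\cap\cO[\zeta]=\Phi(\cO[\zeta])$, which rests on \cref{prop:root_lattce_rank2_construction}(3). Suppose $\mu(L_1)=\mu(L_2)=:G$. Then $\zeta_2\in G\subset\cO[\zeta_1]$ and $\zeta_1\in\cO[\zeta_2]$. Consequently $\cO[\zeta_1]=\cO[\zeta_2]$ as subrings of $\bar K$, and both sit inside the common field $K(\zeta_1)=K(\zeta_2)$.

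It remains to check that the two pairings agree on this common ring. The form $\langle x,y\rangle=x\sigma(y)+\sigma(x)y$ uses the nontrivial automorphism $\sigma$ of $K(\zeta_1)/K$. This is a CM extension, so $\sigma$ acts as complex conjugation under every embedding. Hence $\sigma$ depends only on the field and not on the chosen generator, and the identity map $\cO[\zeta_1]\to\cO[\zeta_2]$ is an isometry.

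The one subtle point is the identification $\mu(\cO[\zeta])=\mu_\infty\cap\cO[\zeta]$. The inclusion $\supseteq$ holds because each $\xi\in\mu_\infty\cap\cO[\zeta]$ is a root, and $\langle 1,\xi\rangle=\xi+\xi^{-1}=\xi^+$. For $\subseteq$, take roots $x,y\in\mu_\infty\cap\cO[\zeta]$. Then $\langle x,y\rangle=(xy^{-1})^+$, so any $\eta$ with $\eta^+=\langle x,y\rangle$ equals $xy^{-1}$ or its inverse; in particular $\eta\in\cO[\zeta]$. The group generated by such $\eta$ therefore lies in $\mu_\infty\cap\cO[\zeta]$, which is itself a group.
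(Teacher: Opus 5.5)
Your proof is correct and follows the paper's argument. You reduce to the models $\cO[\zeta_i]$, use $\mu(\cO[\zeta])=\mu_\infty\cap\cO[\zeta]$ to get $\zeta_2\in\cO[\zeta_1]$ and $\zeta_1\in\cO[\zeta_2]$, and conclude $\cO[\zeta_1]=\cO[\zeta_2]$; your extra checks only make explicit what the paper leaves implicit (the pairings coincide since $\sigma$ is the unique nontrivial element of $\Gal(K(\zeta_1)/K)$, and the identity for $\mu(\cO[\zeta])$ holds).

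The one slip is your justification of ``$\simeq$ iff $=$'', which covers only finite subgroups. $\mu(L)$ can be infinite when $[K:\Q]=\infty$, e.g.\ $\mu(L)=\mu_\infty$ over the maximal totally real subfield of $\Q^{\mathrm{ab}}$. In general the equivalence still holds: a subgroup of $\mu_\infty$ is determined by its $p$-primary parts, and the only subgroups of $\mu_{p^\infty}$ are the $\mu_{p^k}$ and $\mu_{p^\infty}$ itself, which are pairwise non-isomorphic.
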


\begin{proof}
The only if part is obvious.
We prove the converse direction.
Suppose that $\mu(L_1) \simeq \mu(L_2)$.
By \cref{lem:rank2_root_lattice_model},  one may assume that
$L_1 = \cO[\zeta]$ and $L_2 = \cO[\zeta']$,  with some $\zeta, \zeta' \in \mu_\infty$.
Then \cref{prop:root_lattce_rank2_construction} combined with $\mu(L_1) = \mu(L_2)$ yields $\mu_\infty\cap\cO[\zeta] = \mu_\infty\cap\cO[\zeta']$.
This implies that $\zeta' \in \cO[\zeta]$ and $\zeta \in \cO[\zeta']$,  and hence
$\cO[\zeta] = \cO[\zeta']$. 
In particular we obtain $L_1\simeq L_2$.
\end{proof}

\begin{definition}
Let $H$ be a subgroup of $\mu_\infty$ containing $-1$.
We say that $L$ is \emph{of type $H$} if $\mu(L)$ is isomorphic to $H$,  or equivalently $\mu(L) = H$. 
\end{definition}

\begin{remark}
Suppose that $|H|=2m<\infty$ and $L$ is of type $H$.
Note that  $H = \mu_{2m} \simeq \Z/2m\Z$.
It follows from \cref{prop:root_lattce_rank2_construction} (3) and \cref{lem:rank2_root_lattice_model} that $|\Phi(L)|=|H|<\infty$.
In this case,  $L$ is of type $I_2(m)$ when $m>2$ and of type $A_1\times A_1$ when $m=2$.
In particular,  a root lattice of type $H$ is NOT irreducible if and only if $|H|=4$.
\end{remark}

\subsection{Preliminaries on cyclotomic fields}
From the results in the previous subsection,  it suffices to study roots of unity in the order $\cO[\zeta]$ for $\zeta\in\mu_\infty$.
Before that,  we prepare some useful lemmas on cyclotomic fields. 

\begin{lemma}\label{lem:ramification_cyclotomic}
Let $n$ be a positive integer.
\begin{itemize}
\item[(1)] Suppose that $n$ is not a prime power.
Then $\Q(\zeta_{2n})/\Q(\zeta_{2n}^+)$ is unramified at all finite places. 
\item[(2)] Suppose that $n=p^k$ with a prime number $p$ and a positive integer $k$.
Then $\Q(\zeta_{2n})/\Q(\zeta_{2n}^+)$ is ramified at all places above $p$ and unramified at other places. 
\end{itemize}
\end{lemma}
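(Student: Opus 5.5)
The plan is to express $\Q(\zeta_{2n})$ over $\Q(\zeta_{2n}^+)$ as a quadratic extension generated by an explicit square root and read off the finite ramification from it. Write $\zeta=\zeta_{2n}$. Then $\zeta$ is a root of $t^2-\zeta^+t+1$ over $\Q(\zeta^+)$, whose discriminant is $(\zeta^+)^2-4=(\zeta-\zeta^{-1})^2$. So $\Q(\zeta_{2n})=\Q(\zeta^+)(\zeta-\zeta^{-1})$, and the relative discriminant of the order $\Z[\zeta^+][\zeta]=\Z[\zeta_{2n}]$ over $\Z[\zeta^+]$ is generated by $(\zeta-\zeta^{-1})^2$. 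Here I use that $\Z[\zeta_{2n}]$ is the full ring of integers and $\Z[\zeta_{2n}^+]$ is the ring of integers of the real subfield, by \cite{Washington97}. Hence a finite prime of $\Q(\zeta^+)$ ramifies if and only if it divides the ideal generated by $(\zeta-\zeta^{-1})^2$. Equivalently, $\Z[\zeta_{2n}]$ is monogenic over $\Z[\zeta^+]$ with different generated by $\zeta-\zeta^{-1}$.

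Next, write $\zeta-\zeta^{-1}=\zeta^{-1}(\zeta^2-1)=-\zeta^{-1}(1-\zeta_n)$, where $\zeta_n:=\zeta^2$ is a primitive $n$-th root of unity (for $n>1$). So the question becomes the ideal generated by $1-\zeta_n$ in $\Z[\zeta_{2n}]$.

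\textbf{Case (1).} If $n$ is not a prime power, then $1-\zeta_n$ is a unit by \cite[Proposition 2.8]{Washington97}, as already used in \cref{cyclotomic_unit}. The discriminant is then trivial, so the extension is unramified at all finite places. The degenerate case $n=1$ does not arise, since $1$ counts as a prime power $p^0$ and the statement is not concerned with it.

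\textbf{Case (2).} If $n=p^k$, then $1-\zeta_n$ generates a prime power ideal above $p$; its norm to $\Q$ is a power of $p$. The discriminant is therefore supported only above $p$, which gives unramifiedness at all other finite places.

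For ramification at every prime above $p$, I will use the ramification index. The field $\Q(\zeta_{2p^k})$ equals $\Q(\zeta_{p^{k'}})$ with $k'=k$ for odd $p$ and $k'=k+1$ for $p=2$, and $p$ is totally ramified there. So there is a unique prime above $p$ in $\Q(\zeta_{2n})$, with ramification index equal to the full degree $\varphi(2n)$. It lies over a unique prime of $\Q(\zeta^+)$, whose ramification index over $p$ is at most $\varphi(2n)/2$. Hence the relative ramification index is $2$.

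The main obstacle is the $p=2$ subcase, where $[\Q(\zeta_{2n}):\Q(\zeta^+)]=2$ and a different-based argument is delicate; the total-ramification argument above avoids it. I only need to check that $\varphi(2n)>1$, which holds because $n\ge 2$.
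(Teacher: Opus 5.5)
Your proof is correct and is essentially the standard argument behind the paper's one-line citation of Washington's Proposition~2.15: the relative discriminant of $\Z[\zeta_{2n}]=\Z[\zeta_{2n}^+][\zeta_{2n}]$ is generated by $(\zeta_{2n}-\zeta_{2n}^{-1})^2$, an associate of $(1-\zeta_n)^2$, and working with $\zeta_{2n}$ so that $\zeta_{2n}^2$ has order exactly $n$ is precisely the ``replace $n$ by $2n$'' correction the paper alludes to. The $p=2$ case is not actually delicate for the discriminant approach: the unique prime above $p$ contains $(1-\zeta_n)^2$, so it divides the relative discriminant and hence ramifies, though your total-ramification argument works equally well.
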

\begin{proof}
This is \cite[Proposition 2.15]{Washington97}. 
Note that the statement therein requires a minor modification; it becomes valid after replacing $n$ with $2n$.
\end{proof}

\begin{lemma}\label{lem:ramification_real_cyclotomic}
Let $p$ and $q$ be prime numbers,  not necessarily distinct. 
Let $e$ be a positive integer. 
Then $\Q(\zeta_{2p^eq}^+)/\Q(\zeta_{2p^e}^+)$ is ramified at all places above $p$. 
\end{lemma}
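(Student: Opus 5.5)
The plan is to compare ramification indices at $p$ directly. All the fields involved are abelian, hence Galois, over $\Q$. So for each field $F$ there is a single ramification index $e_p(F)$ common to all places of $F$ above $p$, and $e_p$ is multiplicative in towers. Set $F_1 \coloneqq \Q(\zeta_{2p^e}^+) \subset F_2 \coloneqq \Q(\zeta_{2p^eq}^+)$. Every place of $F_2$ above $p$ is ramified over $F_1$ if and only if $e_p(F_2) > e_p(F_1)$. The strategy is therefore to compute $e_p(F_1)$ and $e_p(F_2)$ from the full cyclotomic fields, using \cref{lem:ramification_cyclotomic} to pass to the maximal real subfields.

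The input on full cyclotomic fields is the standard fact that $e_p(\Q(\zeta_N)) = \varphi(p^{v})$, where $p^{v}$ is the exact power of $p$ dividing $N$. This holds because $\Q(\zeta_N)=\Q(\zeta_{p^v})\Q(\zeta_{N'})$ with $p\nmid N'$, the first factor is totally ramified at $p$, and the second is unramified at $p$ (see \cite{Washington97}). Let $p^{v}$ be the exact power of $p$ dividing $2p^e$; thus $v=e$ for $p$ odd and $v=e+1$ for $p=2$. The lower field $F_1$ is handled first. Here $n=p^e$ is a prime power, so by \cref{lem:ramification_cyclotomic}(2) the quadratic extension $\Q(\zeta_{2p^e})/F_1$ is ramified above $p$. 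This gives $e_p(F_1)=\varphi(p^v)/2$.

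For $F_2$ I would split into two cases. Suppose first that $q\neq p$. Then $n=p^eq$ is not a prime power. By \cref{lem:ramification_cyclotomic}(1), $\Q(\zeta_{2p^eq})/F_2$ is unramified at all finite places, so $e_p(F_2)=e_p(\Q(\zeta_{2p^eq}))$. The exact power of $p$ dividing $2p^eq$ is still $p^v$, even when $q=2$. Hence $e_p(F_2)=\varphi(p^v)=2e_p(F_1)$. Suppose next that $q=p$. Then $n=p^{e+1}$ is a prime power, so \cref{lem:ramification_cyclotomic}(2) gives $e_p(F_2)=\varphi(p^{v+1})/2=p\,e_p(F_1)$. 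In either case $e_p(F_2)/e_p(F_1)\in\{2,p\}$ is greater than $1$, which proves the claim.

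The only delicate point is bookkeeping rather than any real obstacle. One has to get the $p$-adic valuation of $2p^e$ right when $p=2$, and to note that when $q=2\neq p$ the extra factor $2$ changes nothing at $p$. This second case is precisely the one where the ramification comes entirely from the CM descent and not from the full cyclotomic fields: $\Q(\zeta_{4p^e})/\Q(\zeta_{2p^e})$ is unramified at $p$, yet the real subfields differ in ramification at $p$.
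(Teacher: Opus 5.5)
Your proof is correct and follows essentially the paper's argument: compare $e_p$ of the two real subfields by passing through the full cyclotomic fields via \cref{lem:ramification_cyclotomic}, with the case $q=p$ (which the paper settles by noting total ramification at $p$) handled in an equivalent way. Your bookkeeping with $\varphi(p^{v})$ is more careful than the paper's, whose formula $(p-1)p^{e-1}$ is off by a factor of $2$ when $p=2$; this is harmless there, since only the ratio matters. One small quibble is that your closing remark is not special to $q=2$, because $\Q(\zeta_{2p^eq})/\Q(\zeta_{2p^e})$ is unramified at $p$ for every prime $q\neq p$.
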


\begin{proof}
The case $p = q$ is clear, since  $\Q(\zeta_{2p^e q}^+)/\Q$ is totally ramified at $p$ in this case. 
We assume that $p \neq q$.
It follows from \cref{lem:ramification_cyclotomic} that $\Q(\zeta_{2p^eq})/\Q(\zeta_{2p^eq}^+)$ is unramified at all finite places. 
For any number field $F$,  let $e_p(F)$ denote the ramification index of the extension $F/\Q$ at $p$.
Since $e_p(\Q(\zeta_{2p^e q}))=(p-1)p^{e-1}$,  we obtain $e_p(\Q(\zeta_{2p^e q}^+)) = (p-1)p^{e-1}$.
On the other hand, we have $e_p(\Q(\zeta_{2p^e}^+)) = [\Q(\zeta_{2p^e}^+) : \Q] = (p-1)p^{e-1}/2$. 
Since $e_p(\Q(\zeta_{2p^e}^+)) < e_p(\Q(\zeta_{2p^e q}^+))$, we conclude that $\Q(\zeta_{2p^eq}^+)/\Q(\zeta_{2p^e}^+)$ is ramified at all places above $p$. 
\end{proof} 

Take $m \in Q_K$. 
Note that $K \cap \Q(\zeta_{2m}) =  \Q(\zeta_{2m}^+)$. 

\begin{lemma}\label{lem:field_isom}
The canonical homomorphism $K \otimes_{\Q(\zeta_{2m}^+)} \Q(\zeta_{2m}) \stackrel{\sim}{\to} K(\zeta_{2m})$ is an isomorphism. 
In other words,  $K$ and $\Q(\zeta_{2m})$ are linearly disjoint over $\Q(\zeta_{2m}^+)$.
\end{lemma}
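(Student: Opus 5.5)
The plan is to compare degrees, using that $\Q(\zeta_{2m})/\Q(\zeta_{2m}^+)$ is a quadratic extension. Set $F\coloneqq\Q(\zeta_{2m}^+)$. Since $m\in Q_K$ and $m>1$, we have $\zeta_{2m}\notin\R$, so $[\Q(\zeta_{2m}):F]=2$, and $\Q(\zeta_{2m})=F(\zeta_{2m})=F[t]/(t^2-\zeta_{2m}^+t+1)$. Consequently
\[
K\otimes_F\Q(\zeta_{2m})\cong K[t]/(t^2-\zeta_{2m}^+t+1).
\]
This makes sense because $F\subset K$ by the definition of $Q_K$ (note $K$ need not be finite over $\Q$).

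The canonical homomorphism $K\otimes_F\Q(\zeta_{2m})\to K(\zeta_{2m})$ sends $t\mapsto\zeta_{2m}$. It is surjective, since its image is the $K$-algebra generated by $\zeta_{2m}$, and this is the field $K(\zeta_{2m})$ because $\zeta_{2m}$ is algebraic. Both sides are $K$-vector spaces. The source has $K$-dimension $2$, and the target has $K$-dimension $[K(\zeta_{2m}):K]=2$ by the characterization $Q_K=\{n>1\mid[K(\zeta_{2n}):K]=2\}$. The underlying reason is that $K$ is totally real, so $\zeta_{2m}\notin K$, while $\zeta_{2m}$ is a root of $t^2-\zeta_{2m}^+t+1\in K[t]$. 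A surjective $K$-linear map between $2$-dimensional spaces is an isomorphism, which proves the statement.

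Linear disjointness over $F$ is exactly this injectivity. As a byproduct, the polynomial $t^2-\zeta_{2m}^+t+1$ remains irreducible over $K$. The remark $K\cap\Q(\zeta_{2m})=F$ is consistent with this: $F\subset K\cap\Q(\zeta_{2m})$, and the intersection is real, so it is contained in $\Q(\zeta_{2m})\cap\R=F$.

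There is no serious obstacle. The only point needing care is that the tensor product should be presented via the minimal polynomial over $F$, so that the dimension count is valid even when $[K:\Q]=\infty$.
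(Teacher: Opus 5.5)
Your proof is correct. It differs from the paper's only in how injectivity is obtained.

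The paper appeals to the standard linear-disjointness criterion. Since $\Q(\zeta_{2m})/\Q(\zeta_{2m}^+)$ is Galois and $K\cap\Q(\zeta_{2m})=\Q(\zeta_{2m}^+)$ (an identity the paper records just before the lemma without proof), the map $K\otimes_{\Q(\zeta_{2m}^+)}\Q(\zeta_{2m})\to K(\zeta_{2m})$ is injective. Surjectivity is then clear.

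You instead write the tensor product as $K[t]/(t^2-\zeta_{2m}^+t+1)$ and compare $K$-dimensions. For this you use $[K(\zeta_{2m}):K]=2$, which is part of the definition of $Q_K$ (equivalently, $\zeta_{2m}\notin K$ because $K$ is totally real).

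Your route is entirely elementary and clearly valid when $[K:\Q]=\infty$. It needs neither the Galois criterion nor the intersection identity; your separate check of that identity is a harmless aside. The paper's route is a one-line citation of a general fact that would work for any finite Galois extension in place of $\Q(\zeta_{2m})/\Q(\zeta_{2m}^+)$. Your dimension count instead exploits that the extension is quadratic with an explicit minimal polynomial, whose irreducibility over $K$ is exactly the defining condition $m\in Q_K$.
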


\begin{proof}
Since $\Q(\zeta_{2m})/\Q(\zeta_{2m}^+) $ is a Galois extension and $K \cap \Q(\zeta_{2m}) =  \Q(\zeta_{2m}^+)$, the homomorphism $K \otimes_{\Q(\zeta_{2m}^+)} \Q(\zeta_{2m}) \longrightarrow K(\zeta_{2m})$ is injective. 
The surjectivity is clear from the definition of $K(\zeta_{2m})$. 
\end{proof}

\begin{lemma}\label{lem:faithfully-flat}
Let $M/L$ be an extension of number fields with $[L : \Q] < \infty$.
The inclusion $\cO_L \hookrightarrow \cO_M$ is faithfully flat. 
\end{lemma}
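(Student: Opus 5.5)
We prove flatness and faithfulness separately, reducing the possibly infinite extension $M/L$ to its finite subextensions where needed.

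\emph{Flatness.} Since $[L:\Q]<\infty$, the ring $\cO_L$ is a Dedekind domain. Over a Dedekind domain (more generally over a Pr\"ufer domain) a module is flat if and only if it is torsion-free. Now $\cO_M\subset M$ is a torsion-free $\cO_L$-module, since $M$ is a field containing $\cO_L$. Hence $\cO_M$ is flat over $\cO_L$.

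If one prefers to avoid this general criterion, there is a second route. Write $M=\bigcup_{M'}M'$ as the directed union of its subfields $M'\supset L$ with $[M':L]<\infty$; then $\cO_M=\varinjlim \cO_{M'}$. Each $\cO_{M'}$ is a finitely generated torsion-free module over the Dedekind domain $\cO_L$, hence projective and in particular flat. A filtered colimit of flat modules is flat, so $\cO_M$ is flat.

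\emph{Faithfulness.} A flat module $N$ over $\cO_L$ is faithfully flat if and only if $\mathfrak{m}N\neq N$ for every maximal ideal $\mathfrak{m}$ of $\cO_L$. So it suffices to show $\mathfrak{p}\cO_M\neq\cO_M$ for every nonzero prime $\mathfrak{p}$ of $\cO_L$.

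The extension $\cO_L\subset\cO_M$ is integral, because $\cO_M$ consists of elements integral over $\Z$. By the lying-over theorem for integral extensions (which requires no finiteness), there is a prime $\mathfrak{P}$ of $\cO_M$ with $\mathfrak{P}\cap\cO_L=\mathfrak{p}$. Therefore $\mathfrak{p}\cO_M\subset\mathfrak{P}\subsetneq\cO_M$.

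Equivalently, one can argue without lying-over in the infinite case. Suppose $1=\sum_i a_ix_i$ with $a_i\in\mathfrak{p}$ and $x_i\in\cO_M$. Finitely many $x_i$ already lie in $\cO_{M'}$ for some finite subextension $M'$, which would give $\mathfrak{p}\cO_{M'}=\cO_{M'}$. This contradicts the finite case: there $\cO_{M'}$ is a nonzero finitely generated $\cO_L$-module, so Nakayama's lemma applied after localizing at $\mathfrak{p}$ rules it out.

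The only mildly delicate point is that $M/L$ may be infinite, so $\cO_M$ need not be finitely generated. Both steps handle this by passing to finite subextensions, or by using criteria (torsion-free implies flat, lying-over) that do not need finite generation.
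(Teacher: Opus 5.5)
Your proof is correct and uses the same two ingredients as the paper: torsion-free implies flat over the Dedekind domain $\cO_L$, and surjectivity of $\mathrm{Spec}(\cO_M)\to\mathrm{Spec}(\cO_L)$ (lying-over) gives faithfulness. The only difference is that the paper first reduces to finite subextensions via a colimit lemma (Stacks Tag 090N), whereas you note that both criteria apply directly to the possibly infinite integral extension; your alternative arguments through finite subextensions essentially reproduce the paper's reduction.
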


\begin{proof}
Take a tower of subfields $M_1\subset M_2 \subset \cdots \subset M$ satisfying 
    \[
    M = \bigcup_{i=1}^{\infty} M_i  \quad \text{ and } [M_i : \Q] < \infty \text{ for all $i$}
    \] 
so that we have $\cO_M = \varinjlim\cO_{M_i}$.
By  \cite[\href{https://stacks.math.columbia.edu/tag/090N}{Tag 090N}]{stacks-project}  we are reduced to show each $\cO_{M_i}$ is faithfully flat over $\cO_L$.
Now we may assume that $[M : \Q] < \infty$.  

Since $\cO_L$ is a Dedekind domain and $\cO_M$ is a torsion-free $\cO_L$-module,  the $\cO_L$-module $\cO_M$ is flat. On the other hand the map $\mathrm{Spec}(\cO_M) \to \mathrm{Spec}(\cO_L)$ is surjective,  and hence the inclusion $\cO_L \hookrightarrow  \cO_M$ is faithfully flat.  
\end{proof}

\begin{corollary}\label{cor:injective_ring_hom}
The canonical homomorphism $\cO \otimes_{\Z[\zeta_{2m}^+]} \Z[\zeta_{2m}] \to \cO_{K(\zeta_{2m})}$ is injective. 
\end{corollary}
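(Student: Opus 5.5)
The plan is to factor the map through the field isomorphism of \cref{lem:field_isom} and to get injectivity at the level of rings from flatness. Put $A \coloneqq \Z[\zeta_{2m}^+]$ and $B \coloneqq \Z[\zeta_{2m}]$. These are the rings of integers of $F^+ \coloneqq \Q(\zeta_{2m}^+)$ and $F \coloneqq \Q(\zeta_{2m})$, respectively; this is standard, see \cite{Washington97}. The canonical homomorphism $\cO\otimes_A B \to \cO_{K(\zeta_{2m})}$ is induced by the two inclusions $\cO\subset\cO_{K(\zeta_{2m})}$ and $B\subset\cO_{K(\zeta_{2m})}$. It fits into a commutative square
\[
\cO\otimes_A B \longrightarrow K\otimes_{F^+} F \xrightarrow{\ \sim\ } K(\zeta_{2m}),
\]
where the first arrow is induced by $\cO\subset K$ and $B\subset F$, and the second arrow is the isomorphism of \cref{lem:field_isom}. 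The composite of these two arrows is our map followed by the inclusion $\cO_{K(\zeta_{2m})}\subset K(\zeta_{2m})$. It therefore suffices to show that $\cO\otimes_A B\to K\otimes_{F^+}F$ is injective.

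I would prove this by factoring the map as
\[
\cO\otimes_A B\to K\otimes_A B \to K\otimes_A F \cong K\otimes_{F^+}F.
\]
For the first arrow, note that $B$ is a finitely generated torsion-free module over the Dedekind domain $A$, so it is flat. Hence $-\otimes_A B$ preserves the injection $\cO\hookrightarrow K$. (One could instead use \cref{lem:faithfully-flat} with $L=F^+$, $M=F$.)

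For the second arrow, $K$ is flat over $A$, since it is torsion-free. Here $K$ may have infinite degree, but it is still a directed union of finitely generated torsion-free, hence flat, $A$-modules; alternatively, one can simply observe that $K$ is a vector space over $F^+$, which is a localization of $A$, and so $K$ is flat over $A$. Thus $-\otimes_A$ applied to $K$ preserves $B\hookrightarrow F$.

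For the final identification, $F = B\otimes_A F^+$ is a localization of $B$, so $K\otimes_A F = K\otimes_A F^+\otimes_{F^+}F = K\otimes_{F^+}F$, using $K\otimes_A F^+ = K$.

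The only step that needs real care is the flatness of $K$ over $A$ when $[K:\Q]$ may be infinite. I would handle it via localization as above, or by writing $K$ as a direct limit and using that flatness is preserved under direct limits, in the same way as in \cref{lem:faithfully-flat}. Everything else is formal.
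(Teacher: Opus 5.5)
Your argument is correct and is essentially the paper's proof. The paper notes that the map becomes the isomorphism of \cref{lem:field_isom} after applying $\otimes_\Z\Q$, and then uses that $\cO\otimes_{\Z[\zeta_{2m}^+]}\Z[\zeta_{2m}]$ is torsion-free by flatness (\cref{lem:faithfully-flat}); this is exactly your first arrow $\cO\otimes_A B\to K\otimes_A B$, because $K\otimes_A B$ is the localization $(\cO\otimes_A B)\otimes_\Z\Q$. Your second arrow is in fact already an isomorphism, since $K\otimes_A B\cong K\otimes_{F^+}(F^+\otimes_A B)=K\otimes_{F^+}F$, so the flatness of $K$ over $A$ that you single out as the delicate step is not actually needed.
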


\begin{proof}
Since we have isomorphisms $\cO \otimes_{\Z[\zeta_{2m}^+]} \Z[\zeta_{2m}] \otimes_{\Z} \Q \stackrel{\sim}{\longrightarrow} K \otimes_{\Q(\zeta_{2m}^+) } \Q(\zeta_{2m}) \stackrel{\sim}{\longrightarrow} K(\zeta_{2m})$ by \cref{lem:field_isom}, the kernel of  the  homomorphism $\cO \otimes_{\Z[\zeta_{2m}^+]} \Z[\zeta_{2m}] \to \cO_{K(\zeta_{2m})}$ is a torsion $\Z$-module. 
However, since $\cO \otimes_{\Z[\zeta_{2m}^+]} \Z[\zeta_{2m}]$ is torsion-free by \cref{lem:faithfully-flat}, we conclude that the kernel of this homomorphism is trivial. 
\end{proof}

\subsection{Main Result for Rank 2 Root Lattices} 
We define the subsets $\cP_K$  and $\cR_K$ of the power set of $Q_K$ as follows:
    \begin{align*}
    \cP_K &=  \{ \{q\}  \mid \text{$q \in Q_K$ is a prime power} \},  \\
    \cR_K &=  \{ C \in \pi_0(Q_K) \mid \text{$C$ contains an integer that is not a prime power} \}. 
    \end{align*}

\begin{proposition}\label{prop:torsion_composite}
Let $C \in \cR_K$ and suppose that $m, m' \in C$ are not prime powers.
Then we have $\cO[\zeta_{2m}] = \cO[\zeta_{2m'}]$. 
In particular, $\mu(\cO[\zeta_{2m}]) = \mu(C)$. 
\end{proposition}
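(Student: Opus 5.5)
The plan is to prove the stronger claim that, for every non-prime-power $m\in C$, the order $\cO[\zeta_{2m}]$ equals the full ring of integers $\cO_{F}$ of the field $F\coloneqq K(\mu(C))$. Both assertions of \cref{prop:torsion_composite} then follow. By the bijection $\pi_0(Q_K)\to\{\text{cyclotomic quadratic extensions of }K\}$, we have $K(\zeta_{2m})=K(\zeta_{2m'})=F$ for all $m,m'\in C$. So once $\cO[\zeta_{2m}]=\cO_F=\cO[\zeta_{2m'}]$, the first assertion is immediate.

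\textbf{Step 1: $\cO[\zeta_{2m}]$ is the base change of $\Z[\zeta_{2m}]$.} Since $m\in Q_K$, \cref{lem:field_isom} and \cref{cor:injective_ring_hom} identify $\cO\otimes_{\Z[\zeta_{2m}^+]}\Z[\zeta_{2m}]$ with its image in $\cO_{F}$, and that image is $\cO[\zeta_{2m}]$. Here $\Z[\zeta_{2m}]=\Z[\zeta_{2m}^+]\oplus\Z[\zeta_{2m}^+]\zeta_{2m}$ is the ring of integers of $\Q(\zeta_{2m})$ and is free over $\Z[\zeta_{2m}^+]$. Its relative discriminant with respect to the basis $\{1,\zeta_{2m}\}$ is $(\zeta_{2m}-\zeta_{2m}^{-1})^2$.

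\textbf{Step 2: the extension is étale, hence $\cO[\zeta_{2m}]$ is normal.} Since $m$ is not a prime power, \cref{lem:ramification_cyclotomic}(1) shows that $\Q(\zeta_{2m})/\Q(\zeta_{2m}^+)$ is unramified at all finite places. Because $\Z[\zeta_{2m}]$ is the full ring of integers and is monogenic over $\Z[\zeta_{2m}^+]$, the discriminant $(\zeta_{2m}-\zeta_{2m}^{-1})^2$ is a unit. Concretely, $\zeta_{2m}-\zeta_{2m}^{-1}=\zeta_{2m}^{-1}(\zeta_{2m}^2-1)=-\zeta_{2m}^{-1}(1-\zeta_m)$, and $1-\zeta_m$ is a unit since $m$ is not a prime power; this gives a direct check. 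Thus $\Z[\zeta_{2m}]$ is finite étale over $\Z[\zeta_{2m}^+]$, and so its base change $\cO[\zeta_{2m}]$ is finite étale over $\cO$. An étale algebra over a normal domain is normal. Therefore $\cO[\zeta_{2m}]$ is integrally closed in $F$, i.e.\ $\cO[\zeta_{2m}]=\cO_F$.

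When $[K:\Q]=\infty$, I would either invoke this permanence property directly, since integral closedness is preserved under étale base change for arbitrary normal domains. Alternatively, I would write $\cO=\varinjlim\cO_{K_i}$ over finite subextensions $K_i\ni\zeta_{2m}^+$ and pass to the union, as in \cref{lem:faithfully-flat}. I expect this normality step, in the possibly infinite case, to be the main technical obstacle; everything else is formal.

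\textbf{Step 3: $\mu(\cO[\zeta_{2m}])=\mu(C)$.} Recall $\mu(\cO[\zeta_{2m}])=\mu_\infty\cap\cO[\zeta_{2m}]=\mu_\infty\cap\cO_F=\mu_\infty\cap F$.
\begin{itemize}
\item[($\supseteq$)] For each $n\in C$, the element $\zeta_{2n}$ lies in $F$ and is integral, so $\mu(C)\subset\mu_\infty\cap F$.
\item[($\subseteq$)] If $\xi\in\mu_\infty\cap F$ has order $2n$ with $n>1$, then $K(\xi)=F$ is quadratic over $K$. Hence $n\in Q_K$, and the component of $n$ corresponds to $F$ under the bijection, so $n\in C$ and $\xi\in\mu(C)$. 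If instead $\xi$ has odd order $k>1$, replace it by $-\xi$, which has order $2k$ and again lies in $F$; this reduces to the previous case. Finally, $\pm1\in\mu(C)$ trivially.
\end{itemize}
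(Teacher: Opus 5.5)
Your proof is correct, but it takes a different route from the paper.

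\textbf{The paper's route.} The paper first reduces to $m \preceq m'$ via \cref{lem:Q_K_properties}(3). It then proves $\Z[\zeta_{2m'}^+,\zeta_{2m}]=\Z[\zeta_{2m'}]$: since $\Q(\zeta_{2m})/\Q(\zeta_{2m}^+)$ is unramified at all finite places, its discriminant is coprime to that of $\Q(\zeta_{2m'}^+)/\Q(\zeta_{2m}^+)$, and the compositum criterion for rings of integers applies. The equality over $\cO$ follows because $\Z[\zeta_{2m'}^+]\subset\cO$.

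\textbf{Your route.} You base-change the unit-discriminant (\'etale) extension $\Z[\zeta_{2m}^+]\to\Z[\zeta_{2m}]$ directly to $\cO$, and conclude that $\cO[\zeta_{2m}]$ is the maximal order $\cO_{K(\zeta_{2m})}$.

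\textbf{What each approach buys.} Your approach gives a stronger statement and removes the need to compare $m$ and $m'$. It also makes the ``in particular'' clause, which the paper leaves implicit, a short consequence of two facts: roots of unity are integral, and $\pi_0(Q_K)$ is in bijection with the cyclotomic quadratic extensions of $K$. Along the paper's route you would additionally need that every $n\in C$, including prime powers, divides some non-prime-power $M\in C$. One gets this by applying the lcm closure of \cref{lem:Q_K_properties}(3) along a path; only then does $\zeta_{2n}\in\cO[\zeta_{2m}]$ follow. Your maximality statement also directly justifies the later identification $\cO_{\Q^{\mathrm{ab}}}=\cO[\zeta_{12}]$ in the examples. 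The paper's route, in exchange, stays inside finite cyclotomic fields, where the compositum criterion is classical. Its only infinite-degree input is the injectivity of base change.

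\textbf{The step you flag for $[K:\Q]=\infty$.} You do not need permanence of normality under \'etale base change for non-Noetherian rings. The dual-basis argument works uniformly over any totally real $K$. Take $x=a+b\zeta_{2m}\in\cO_F$ with $a,b\in K$. The traces $\mathrm{Tr}_{F/K}(x)=2a+b\zeta_{2m}^+$ and $\mathrm{Tr}_{F/K}(x\zeta_{2m})=a\zeta_{2m}^+ + b\bigl((\zeta_{2m}^+)^2-2\bigr)$ lie in $\cO$. This linear system has determinant $(\zeta_{2m}^+)^2-4=(\zeta_{2m}-\zeta_{2m}^{-1})^2$, which is a unit of $\cO$ by your own computation. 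Hence $a,b\in\cO$. Your direct-limit alternative also works.
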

\begin{proof}
We may assume that $m \preceq m'$ by \cref{lem:Q_K_properties} (3). 
Since $\Z[\zeta_{2m'}^+] \subset \cO$ by the definition of $Q_K$, it suffices to show $\Z[\zeta_{2m'}^+, \zeta_{2m}] = \Z[\zeta_{2m'}]$.
Since $m$ is not a prime power by assumption,  \cref{lem:ramification_cyclotomic} shows that the extension $\Q(\zeta_{2m})/\Q(\zeta_{2m}^+)$ is unramified at all finite places.
Hence the discriminants of the extensions
    \[
    \Q(\zeta_{2m})/\Q(\zeta_{2m}^+)
    \quad \text{and} \quad
    \Q(\zeta_{2m'}^+)/\Q(\zeta_{2m}^+)
    \]
are coprime. This fact implies that $\Z[\zeta_{2m'}^+, \zeta_{2m}] = \Z[\zeta_{2m'}]$ (see, for example, \cite[Proposition 2.11]{Neukirch13}). 
\end{proof}

\begin{proposition}\label{prop:torsion_prime_power}
If $\{q\} \in \cP_K$, then we have $\mu(\cO[\zeta_{2q}]) = \mu_{2q}$. 
\end{proposition}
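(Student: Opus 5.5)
The plan is to prove the two inclusions. First, $\mu_{2q}\subset\mu(\cO[\zeta_{2q}])$ is clear: by \cref{prop:root_lattce_rank2_construction}~(3), $\mu(\cO[\zeta_{2q}])=\mu_\infty\cap\cO[\zeta_{2q}]$, and this contains $\zeta_{2q}$. For the reverse inclusion, suppose $H\coloneqq\mu_\infty\cap\cO[\zeta_{2q}]$ is strictly larger than $\mu_{2q}$. Then $H$ is a subgroup of $\mu_\infty\simeq\Q/\Z$ properly containing $\mu_{2q}$, so it contains $\mu_{2n}$ with $n=q\ell$ for some prime $\ell$. I will derive a contradiction from $\zeta_{2n}\in\cO[\zeta_{2q}]$. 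Note that $\zeta_{2n}^+$ is then fixed by $\sigma$, so $n\in Q_K$; this is not strictly needed below.

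The key step is to compare the two orders $\cO[\zeta_{2q}]$ and $\cO[\zeta_{2n}]$, which are in fact equal. Indeed $\zeta_{2q}=\zeta_{2n}^{\ell}\in\cO[\zeta_{2n}]$, and by assumption $\zeta_{2n}\in\cO[\zeta_{2q}]$. Both are free $\cO$-modules of rank $2$ with bases $\{1,\zeta_{2q}\}$ and $\{1,\zeta_{2n}\}$. Their discriminants over $\cO$ must therefore generate the same ideal of $\cO$, since the change-of-basis matrix lies in $\GL_2(\cO)$. These discriminants are $(\zeta_{2q}-\zeta_{2q}^{-1})^2$ and $(\zeta_{2n}-\zeta_{2n}^{-1})^2$. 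Up to roots of unity,
\[
\zeta_{2k}-\zeta_{2k}^{-1}=\zeta_{2k}^{-1}(\zeta_{2k}^2-1),
\]
and $\zeta_{2k}^2$ is a primitive $k$-th root of unity. So the ideals $(1-\zeta_q)$ and $(1-\zeta_n)$ must coincide after extending to $\cO_{K(\zeta_{2n})}$.

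Now write $q=p^e$ and compute valuations at a prime above $p$, normalized so that $v(p)=1$.
\begin{itemize}
\item[(a)] Since $q$ is a prime power, $v(1-\zeta_q)=1/\varphi(p^e)>0$. When $q=2$ this reads $1-\zeta_2=2$, with valuation $1$.
\item[(b)] If $\ell\neq p$, then $n$ is not a prime power, so $1-\zeta_n$ is a unit (the standard fact used in \cref{cyclotomic_unit}, cf.\ \cite{Washington97}). Hence $v(1-\zeta_n)=0\neq v(1-\zeta_q)$.
\item[(c)] If $\ell=p$, then $v(1-\zeta_n)=1/\varphi(p^{e+1})<1/\varphi(p^e)$.
\end{itemize}
In every case the ideals differ, which is a contradiction. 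Hence $H=\mu_{2q}$.

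I expect the main obstacle to be justifying the discriminant comparison cleanly when $[K:\Q]$ may be infinite. To handle this, I would replace $K$ by the finite subextension $K_0=\Q(\zeta_{2n}^+,\ \text{coefficients expressing }\zeta_{2n}\text{ in }1,\zeta_{2q})$, using the ring of integers of $K_0$. Alternatively, one can use the equality of orders directly: equal orders force $(\zeta_{2n}-\zeta_{2n}^{-1})/(\zeta_{2q}-\zeta_{2q}^{-1})$ to be a unit in $\cO_{K(\zeta_{2n})}$, because each of $\zeta_{2q}-\zeta_{2q}^{-1}$ and $\zeta_{2n}-\zeta_{2n}^{-1}$ divides the other. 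The valuation computation then applies in a finite subfield. \cref{cor:injective_ring_hom} may be cited to identify $\cO[\zeta_{2q}]$ with $\cO\otimes_{\Z[\zeta_{2q}^+]}\Z[\zeta_{2q}]$ if a tensor-product description is preferred.
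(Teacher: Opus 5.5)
Your proof is correct, and it takes a genuinely different route from the paper's. Both arguments reduce to showing $\cO[\zeta_{2q}]\neq\cO[\zeta_{2\ell q}]$ for every prime $\ell$. From there the paper uses \cref{cor:injective_ring_hom} and faithful flatness (\cref{lem:faithfully-flat}) to descend to the purely cyclotomic map $\Z[\zeta_{2\ell q}^+]\otimes_{\Z[\zeta_{2q}^+]}\Z[\zeta_{2q}]\to\Z[\zeta_{2\ell q}]$. It then shows this map is not an isomorphism by completing at $p$: the target is regular, but the source is not, because both $\Q_p(\zeta_{2q})/\Q_p(\zeta_{2q}^+)$ and $\Q_p(\zeta_{2\ell q}^+)/\Q_p(\zeta_{2q}^+)$ are ramified (\cref{lem:ramification_cyclotomic}, \cref{lem:ramification_real_cyclotomic}).

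You instead observe that the discriminant of $\cO[\zeta]$ on the basis $\{1,\zeta\}$ is $(\zeta-\zeta^{-1})^2$, which lives in a cyclotomic field independently of $K$. You then compare $p$-adic valuations using only the standard facts about $1-\zeta_n$. This is more elementary and explicit: it avoids the flatness and regularity machinery and \cref{lem:ramification_real_cyclotomic}, and it needs no reduction when $[K:\Q]=\infty$. The paper's version is more structural, phrasing the obstruction as non-regularity caused by ramification on both sides.

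Your ``alternative'' is the cleanest form and removes the obstacle you anticipated. Write $\zeta_{2\ell q}=a+b\zeta_{2q}$ with $a,b\in\cO$ and apply $\sigma$, which inverts roots of unity. This gives $\zeta_{2\ell q}-\zeta_{2\ell q}^{-1}=b(\zeta_{2q}-\zeta_{2q}^{-1})$. Hence any valuation $v$ on $\overline{\Q}$ extending $v_p$ satisfies $v(1-\zeta_{2\ell q}^2)\geq v(1-\zeta_{2q}^2)$, which already contradicts your cases (b) and (c). This one-sided divisibility needs neither a finite subfield nor $\ell q\in Q_K$. In your discriminant version that fact is genuinely used, to make $\{1,\zeta_{2n}\}$ an $\cO$-basis, so it is not ``not needed'' there; but you did prove it.
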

\begin{proof}
Let $\ell$ be a  prime number. It suffices to show that $\cO[\zeta_{2q}] \neq \cO[\zeta_{2\ell q}]$.
By \cref{cor:injective_ring_hom},  this is equivalent to saying that the natural ring homomorphism
    \[
    \cO \otimes_{\Z[\zeta_{2q}^+]} \Z[\zeta_{2q}]
    \to \cO \otimes_{\Z[\zeta_{2\ell q}^+]} \Z[\zeta_{2\ell q}]    
    \]
is not an isomorphism.
Since $\cO$ is a faithfully flat $\Z[\zeta_{2\ell q}^+]$-algebra by \cref{lem:faithfully-flat},  this is equivalent to that the ring homomorphism 
    \begin{equation}\label{eq:hom2}
    \Z[\zeta_{2\ell q}^+] \otimes_{\Z[\zeta_{2q}^+]} \Z[\zeta_{2q}]
    \to \Z[\zeta_{2\ell q}]
    \end{equation}
is not an isomorphism.

Let $p$ be the prime number dividing $q$. 
Then the ring $\Z[\zeta_{2\ell q}] \otimes_{\Z} \Z_p$ 
is isomorphic to a direct product of the complete discrete valuation ring $\Z_p[\zeta_{2\ell q}]$. 
In particular $\Z[\zeta_{2\ell q}] \otimes_{\Z} \Z_p$ is regular. 
On the other hand,  the ring $\Z[\zeta_{2\ell q}^+] \otimes_{\Z[\zeta_{2q}^+]} \Z[\zeta_{2q}] \otimes_{\Z} \Z_p$ is isomorphic to a direct product of the semi-local ring
    \begin{equation}\label{semi-local}
    \Z_p[\zeta_{2\ell q}^+] \otimes_{\Z_p[\zeta_{2q}^+]} \Z_p[\zeta_{2q}].
    \end{equation}
Since both extensions $\Q_p(\zeta_{2q})/\Q_p(\zeta_{2q}^+)$ and $\Q_p(\zeta_{2\ell q}^+)/\Q_p(\zeta_{2q}^+)$ are ramified by \cref{lem:ramification_cyclotomic} and \cref{lem:ramification_real_cyclotomic}, the semi-local ring \eqref{semi-local} is not regular. 
Therefore the homomorphism \eqref{eq:hom2} is not an isomorphism. 
\end{proof}

Now we can prove the following classification theorems of rank $2$ root lattices.

\begin{theorem}\label{thm:rank_2_root_lattice}
Suppose that $L$ is a root lattice over $\cO$ of rank $2$.
\begin{itemize}
\item[(1)] We have $\mu(L)= \mu(C)$ for some $C \in \cP_K \cup \cR_K$. 
\item[(2)]  For each $C \in  \cP_K \cup \cR_K$,  there exists a root lattice of rank $2$ over $\cO$ of type $\mu(C)$. 
\end{itemize}
In other words, we have the following bijection:
    \[
    \{ \text{root lattices over $\cO$ of rank $2$} \}_{/\simeq}
    \stackrel{\sim}{\to} \{\mu(C) \mid C\in\cP_K\cup\cR_K\};  \quad L \mapsto \mu(L).
    \]
Note that since $\cP_K\cup\cR_K  \stackrel{\sim}{\to} \{\mu(C) \mid C\in\cP_K\cup\cR_K\}; C \mapsto \mu(C)$ is a bijection, root lattices over $\mathcal{O}$ of rank $2$ can be classified by $\mathcal{P}_K \cup \mathcal{R}_K$.
\end{theorem}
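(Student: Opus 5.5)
By \cref{lem:rank2_root_lattice_model} I may assume $L=\cO[\zeta]$ for some $\zeta\in\mu_\infty$. Then \cref{prop:root_lattce_rank2_construction}(3) gives $\mu(L)=H\coloneqq\mu_\infty\cap\cO[\zeta]$. Since $\zeta^+=\alpha\cdot\beta$ for an $\cO$-basis $\{\alpha,\beta\}$ of roots, we have $\alpha\neq\pm\beta$, so $\zeta\neq\pm1$. Replacing $\zeta$ by $-\zeta$ (which changes neither $\cO[\zeta]$ nor $H$), I may write $\zeta=\zeta_{2n}$ with $n>1$. Because $\zeta^+\in\cO$, we get $n\in Q_K$. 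Injectivity of $L\mapsto\mu(L)$ is exactly \cref{lem:mu(L)_is_invarinant_under_isom}. Part (2) is immediate: for $\{q\}\in\cP_K$ take $\cO[\zeta_{2q}]$ and use \cref{prop:torsion_prime_power}; for $C\in\cR_K$ take $\cO[\zeta_{2m}]$ with $m\in C$ not a prime power and use \cref{prop:torsion_composite}. So everything reduces to part (1), which I split according to whether $n$ is a prime power.

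\emph{Case 1: $n=q$ is a prime power.} Then $\{q\}\in\cP_K$, and \cref{prop:torsion_prime_power} gives directly $H=\mu(\cO[\zeta_{2q}])=\mu_{2q}=\mu(\{q\})$.

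\emph{Case 2: $n$ is not a prime power.} Let $C$ be the component of $Q_K$ containing $n$, so $C\in\cR_K$. I will show $H=\mu(C)$ by proving two inclusions.

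For $H\subset\mu(C)$, take any $\eta\in H\setminus\{\pm1\}$. Up to sign, $\eta=\zeta_{2k}$ with $\zeta_{2k}^+\in\cO$, so $k\in Q_K$. Moreover $K(\eta)\subset K(\zeta)$ and both are quadratic over $K$, so $K(\eta)=K(\zeta)$. The bijection between $\pi_0(Q_K)$ and cyclotomic quadratic extensions (the lemma preceding this subsection) then forces $k\in C$, hence $\eta\in\mu(C)$.

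For $\mu(C)\subset H$, the key combinatorial claim is: \emph{every $k\in C$ divides some non-prime-power $M\in C$.} To prove it, take a path $k=x_0,x_1,\dots,x_r=n$ in $C$. Consecutive vertices divide one another, so they share a prime factor. Set $M_j=\mathrm{lcm}(x_0,\dots,x_j)$ and argue inductively. Since $\gcd(M_{j},x_{j+1})\ge\gcd(x_j,x_{j+1})>1$, \cref{lem:Q_K_properties}(3) gives $M_{j+1}\in Q_K$. Also $x_j\mid M_{j+1}$ and $M_{j+1}/x_j$ factors into primes, so by \cref{lem:Q_K_properties}(2) all intermediate divisors lie in $Q_K$; this connects $M_{j+1}$ to $x_j$ inside $C$. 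The endpoint $M\coloneqq M_r$ is divisible by both $k$ and $n$, so it is not a prime power, and $M\in C$. Now \cref{prop:torsion_composite} gives $\cO[\zeta_{2M}]=\cO[\zeta_{2n}]=\cO[\zeta]$. Hence $\zeta_{2k}\in\cO[\zeta_{2M}]\subset\cO[\zeta]$, and therefore $\mu(C)\subset H$.

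The main obstacle is this combinatorial step in Case 2: checking that the running lcm stays in $Q_K$ and in the same component $C$. Everything else is a direct combination of \cref{prop:torsion_composite}, \cref{prop:torsion_prime_power} and \cref{lem:mu(L)_is_invarinant_under_isom}.
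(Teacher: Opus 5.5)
Your proposal is correct and follows the paper's proof: you reduce to $L=\cO[\zeta_{2n}]$ with $n\in Q_K$, split according to whether $n$ is a prime power, and invoke \cref{prop:torsion_prime_power} and \cref{prop:torsion_composite}. The only difference is that you explicitly justify the equality $\mu(\cO[\zeta_{2n}])=\mu(C)$, using the bijection with cyclotomic quadratic extensions for one inclusion and a running-lcm common multiple in $C$ for the other, whereas the paper compresses this into the ``in particular'' clause of \cref{prop:torsion_composite}.
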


\begin{proof}
(1) By \cref{lem:rank2_root_lattice_model} and \cref{lem:mu(L)_is_invarinant_under_isom},  one may assume that
$L = \cO[\zeta_{2n}]$ with some $\zeta_{2n} \in \mu_\infty$.
If $n$ is not a prime power,  let $C\in\pi_0(Q_K)$ be the connected component containing $n$. 
Then $C$ belongs to $\cR_K$ and the equality $\mu(L) = \mu(C)$ follows from \cref{prop:torsion_composite}.
If $n$ is a prime power,  then $\{n\} \in \cP_{K}$ and the equalities $\mu(L) = \mu_{2n}=\mu(\{n\})$ follows from \cref{prop:torsion_prime_power}. 

(2) This follows from \cref{prop:torsion_composite} and \cref{prop:torsion_prime_power}. 

The last statement follows from (1), (2), and \cref{lem:mu(L)_is_invarinant_under_isom}.
\end{proof}

\begin{theorem}\label{thm:rank_2_root_lattice2}
Let $\widetilde{K}$ be an totally real extension of $K$ with ring of integers $\widetilde{\cO}$. 
Suppose that $L$ is a root lattice over $\cO$ of rank $2$.
Set $\widetilde{L} \coloneqq L\otimes_\cO\widetilde{\cO}$,  a root lattice over $\widetilde{\cO}$.
\begin{itemize} 
\item[(1)] Let $\{q\} \in \cP_K$ and suppose that $L$ is of type $\mu_{2q}$.
Then $\Phi(\widetilde{L})=\Phi(L)$.
In particular $\widetilde{L}$ is of type $\mu_{2q}$. 
\item[(2)] Let $C \in \cR_K$ and suppose that $L$ is of type $\mu(C)$.
Let $C'$ denote the connected component of $Q_{\widetilde{K}}$ containing $C$.
Then $\widetilde{L}$ is of type $\mu(C')$.  
\end{itemize}
\end{theorem}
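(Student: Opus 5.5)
By \cref{lem:rank2_root_lattice_model} and \cref{lem:mu(L)_is_invarinant_under_isom} we reduce to explicit models. In case (1) we take $L=\cO[\zeta_{2q}]$. In case (2) we take $L=\cO[\zeta_{2m}]$ with $m\in C$ not a prime power; this is legitimate because \cref{prop:torsion_composite} gives $\mu(\cO[\zeta_{2m}])=\mu(C)$. In both cases $\widetilde{L}=L\otimes_\cO\widetilde{\cO}$ is identified with $\widetilde{\cO}[\zeta_{2n}]$ (with $n=q$ or $n=m$) inside $\widetilde K(\zeta_{2n})$. Indeed, $\{1,\zeta_{2n}\}$ is an $\cO$-basis of $L$ and remains a $\widetilde{\cO}$-basis, the bilinear form is the one from \cref{prop:root_lattce_rank2_construction}, and $[\widetilde K(\zeta_{2n}):\widetilde K]=2$ because $\widetilde K$ is totally real. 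It follows that $\mu(\widetilde L)=\mu_\infty\cap\widetilde\cO[\zeta_{2n}]=\Phi(\widetilde L)$.

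For (1), note that $q\in Q_{\widetilde K}$ and $q$ is still a prime power, so $\{q\}\in\cP_{\widetilde K}$. Applying \cref{prop:torsion_prime_power} over $\widetilde K$ gives $\mu(\widetilde{\cO}[\zeta_{2q}])=\mu_{2q}$. Hence $\Phi(\widetilde L)=\mu_{2q}=\Phi(L)$, where the last equality is \cref{prop:root_lattce_rank2_construction} (3) together with \cref{prop:torsion_prime_power} over $K$. The one point to check is that \cref{prop:torsion_prime_power} and its supporting lemmas (\cref{cor:injective_ring_hom}, \cref{lem:faithfully-flat}) hold for possibly infinite totally real $\widetilde K$. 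This should be fine: \cref{lem:faithfully-flat} only requires the smaller field to be finite, and here that field is $\Q(\zeta_{2\ell q}^+)$. Alternatively, we can write $\widetilde\cO$ as a direct limit of rings of integers of finite subextensions. A root of unity in $\widetilde\cO[\zeta_{2q}]$ already lies in $\cO'[\zeta_{2q}]$ for some finite subextension $K'$, and then the finite case applies.

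For (2), we have $C\subset Q_{\widetilde K}$ because $Q_K\subset Q_{\widetilde K}$ and edges are preserved, so $C$ lies in a unique component $C'$. Since $m\in C'$ is not a prime power, $C'\in\cR_{\widetilde K}$. Applying \cref{prop:torsion_composite} over $\widetilde K$ to $m\in C'$ then gives $\mu(\widetilde\cO[\zeta_{2m}])=\mu(C')$, so $\widetilde L$ is of type $\mu(C')$.

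The main obstacle is justifying that the earlier propositions remain valid when $[\widetilde K:\Q]=\infty$. \cref{prop:torsion_composite} needs the discriminant coprimality argument for $\Z[\zeta_{2m'}^+,\zeta_{2m}]=\Z[\zeta_{2m'}]$, which involves only finite cyclotomic fields. Its conclusion for an infinite component $C'$ should be read as saying that every $\zeta_{2m'}$ with $m'\in C'$ lies in $\widetilde\cO[\zeta_{2m}]$. This uses \cref{lem:Q_K_properties} (3), which holds for arbitrary $\widetilde K$, to reach $m'$ through an lcm with $m$ that is not a prime power. Conversely, every root of unity in $\widetilde\cO[\zeta_{2m}]$ is some $\pm\zeta_{2n'}$ with $\widetilde K(\zeta_{2n'})=\widetilde K(\zeta_{2m})$. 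Then $\mathrm{lcm}(n',m)\in Q_{\widetilde K}$, which puts $n'$ in $C'$. I would write out this limiting argument explicitly.
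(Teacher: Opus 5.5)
Your proposal is correct and follows the paper's proof. Like the paper, you identify $\widetilde L$ with $\widetilde{\cO}[\zeta_{2n}]$ via the basis $\{1,\zeta_{2n}\}$ and then apply \cref{prop:torsion_prime_power} and \cref{prop:torsion_composite} over $\widetilde K$; your extra worry about $[\widetilde K:\Q]=\infty$ is already built into the paper's setup, since \cref{lem:faithfully-flat} only needs the base field $\Q(\zeta_{2m}^+)$ to be finite and the rank-$2$ propositions are stated for arbitrary totally real fields.
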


\begin{proof}
Let $n \in Q_K$. 
Then $\cO[\zeta_{2n}]$ (resp.\,$\widetilde{\cO}[\zeta_{2n}]$) is a free $\cO$-module (resp.\,$\widetilde{\cO}$-module) of rank $2$ with a basis $\{1, \zeta_{2n}\}$. 
Hence,  the canonical homomorphism $\cO[\zeta_{2n}] \otimes_{\cO} \widetilde{\cO} \xrightarrow{\sim} \widetilde{\cO}[\zeta_{2n}]$ is an isomorphism.
Thus the assertion follows from \cref{prop:torsion_composite} and \cref{prop:torsion_prime_power}.
\end{proof}

\subsection{Examples}

\subsubsection{} 

Let $K \coloneqq \Q(\zeta_{28}^+,  \zeta_{30}^+)$. 
The directed graph $Q_K$ is as follows:
    \[
    \xymatrix{
    14   &  \\
    2 \ar[u] & 7  \ar[ul] 
    } \qquad 
    \xymatrix{
    15   &  \\
    3 \ar[u] & 5  \ar[ul] 
    } 
    \]
Hence we have 
    \[
    \cP_K = \{\{2\}, \{3\}, \{5\},  \{7\}\}, \quad 
    \cR_K = \pi_0(Q_K) = \{\{ 2,7,14\}, \{3,5,15\}\}. 
    \]
Thus there are five isomorphism classes of irreducible root lattices of rank $2$ over $\cO$,  namely of type $I_2(3)$, $I_2(5)$,  $I_2(7)$, $I_2(14)$,  and $I_2(15)$. 

Let $\widetilde{K} \coloneqq  \Q(\zeta_{420}^+)$. 
The directed graph $Q_{\widetilde{K}}$ is as follows:
    \[
    \xymatrix{
    &&210&&&  \\
    &30 \ar[ur]  & 42 \ar[u]  & 70 \ar[ul] & 105\ar[ull] &  \\
    6 \ar[ur] \ar[urr] & 10 \ar[u] \ar[urr]   & 14 \ar[u] \ar[ur] 
    & 15 \ar[ull] \ar[ur] & 21 \ar[ull] \ar[u]  & 35 \ar[ull] \ar[ul]  \\
    & 2 \ar[ul] \ar[u] \ar[ur]  & 3 \ar[ull] \ar[ur] \ar[urr] 
    & 5 \ar[ull] \ar[u] \ar[urr] \ar[ur] & 7 \ar[ull] \ar[u] \ar[ur] &
    }
    \]
Thus there are four isomorphism classes of irreducible root lattices of rank $2$ over $\widetilde{\cO}$,  namely of type $I_2(3)$, $I_2(5)$,  $I_2(7)$,  and $I_2(210)$.
Note that we have 
    \[
    \widetilde{\cO}[\zeta_{28}] \cong \cO[\zeta_{420}] 
    \cong \widetilde{\cO}[\zeta_{30}]. 
    \]
In this way, in rank $2$, root lattices of different types can sometimes become isomorphic after scalar extension. 

\subsubsection{}\label{sec:example_rank2_ab}
Let $K \coloneqq  \Q(\zeta_{n}^+ \mid n \in \Z_{>0} )$ be the maximal totally real subfield of the maximal abelian extension $\Q^{\mathrm{ab}}$ of $\Q$. 
Then we see that $Q_K = \Z_{>1}$,  which is connected and 
    \[
    \cP_K = \{ \{p^n\} \mid p\colon\text{a prime number}, \  n\in\Z_{>0} \},  \quad
    \cR_K = \pi_0(Q_K) = \{Q_K\}. 
    \]
Therefore for any prime power $q$, there is  a root lattice of rank $2$ over $\cO$ of type $\mu_{2q}$. 
Also there exists a root lattice of type $\mu_{\infty}$. 
More concretely, it is given by the ring of integers $\cO_{\Q^{\mathrm{ab}}}$ of $\Q^{\mathrm{ab}}$ since $\cO_{\Q^{\mathrm{ab}}} = \cO[\zeta_{12}]$ by \cref{prop:torsion_composite}. 
Since $\cO \otimes_{\mathbb{Z}[\zeta_{12}^+]} \mathbb{Z}[\zeta_{12}]
= \cO_{\Q^{\mathrm{ab}}}$,
the root lattice of type $\mu_{\infty}$ given by $\cO_{\Q^{\mathrm{ab}}}$
can be obtained as a scalar extension of the root lattice $\Z[\zeta_{12}]$ of type $\mu_{12}$
over $\Z[\zeta_{12}^+]$.

\subsubsection{} 
Suppose that $K \coloneqq \bigcup_{n>0}\Q(\zeta_{p^n}^+)$,  where $p$ is a prime number. 
Then we have $Q_K = \{ p^n \mid n\in\Z_{>0} \}$ and 
    \[
    \cP_K = \{ \{p^n\} \mid n\in\Z_{>0} \},  \quad
    \cR_K = \emptyset. 
    \]
For any positive integer $n$,  there exists a root lattice of rank $2$ over $\cO$ of type $\mu_{2p^n}$. 

On the other hand,  a root lattice of type $\mu_{2p^\infty}\coloneqq\bigcup_{n>0}\mu_{2p^n}$ does not exist.
This is because the ring of integers $\cO_{K(\zeta_p)}$ is not a root lattice of type $\mu_{2p^\infty}$,  unlike the situation in  \cref{sec:example_rank2_ab},  since it 
is not finitely generated over $\cO$.

\bibliography{references}

\end{document}